\newcommand{\R}{\mathbb{R}}
\newcommand{\C}{\mathbb{C}}
\newcommand{\T}{\mathbb{T}}
\newcommand{\Z}{\mathbb{Z}}
\newcommand{\N}{\mathbb{N}}
\newcommand{\tb}{\underline{\theta}}
\newcommand{\ul}[1]{\underline{#1}}
\newcommand{\ld}{\lambda}
\newcommand{\Ld}{\Lambda}
\theoremstyle{plain}
\newtheorem{thm}{Theorem}[section]
\newtheorem{lem}{Lemma}[section]
\newtheorem{prop}{Proposition}[section]
\newtheorem{cor}{Corollary}[section]
\newtheorem{stm}{Statement}[section]
\newtheorem*{scl}{Theorem A (scale recurrence lemma)}
\newtheorem*{dmf}{Theorem B (dimension formula)}
\newtheorem*{cor6.4}{Corollary 6.4}
\theoremstyle{definition}
\newtheorem{defn}{Definition}[section]
\theoremstyle{remark}
\title{Scale Recurrence Lemma and Dimension Formula for Cantor Sets in the Complex Plane}
\author{Carlos Gustavo T. de A. Moreira\thanks{
                  IMPA}
        \and
        Alex Mauricio Zamudio\thanks{
                      UFRJ
                  }\thanks{This work was
                  supported by CNPq and CAPES.}
        }
\begin{document}

\maketitle

\begin{abstract}
We will prove a multidimensional conformal version of the scale recurrence lemma of Moreira and Yoccoz \cite{MY} for Cantor sets in the complex plane. We then use this new recurrence lemma, together with the ideas in \cite{M}, to prove that under the right hypothesis for the Cantor sets $K_1,...,K_n$ and the function $h:\C^{n}\to \R^{l}$, the following formula holds
\[HD(h(K_1\times K_2 \times ...\times K_n))=\min \{l,HD(K_1)+...+HD(K_n)\}.\]
\end{abstract}

\section{Introduction.}

In this paper, we prove a version of the {\em scale recurrence lemma} of Moreira and Yoccoz (see subsection 3.2 of \cite{MY}) in the context of Cantor sets in the complex plane. We will use this new version, together with other results, to prove a dimension formula for projections of products of complex Cantor sets. More precisely, given conformal regular Cantor sets $K_1,...,K_n$ in $\C$, and a $C^1$ function $h:\C^{n}\to \R^{l}$, we prove that, under natural hypothesis, one has
\begin{equation}\label{eq:DimFor}
HD(h(K_1\times K_2 \times ...\times K_n))=\min \{l,HD(K_1)+...+HD(K_n)\}.
\end{equation}

Our results will be proved for conformal regular Cantor sets. Those are Cantor sets which are maximal invariant sets for an expanding map, whose derivative is conformal at the points in the Cantor set. Rigorous definitions will be given in section \ref{BDef}. The investigation of such Cantor sets is important because they appear in the study of homoclinic bifurcations for automorphisms of $\C^2$, as shown by Araujo in \cite{H}. We expect that conformal regular Cantor sets in $\C$ will play a role in the study of homoclinic bifurcations for automorphisms of $\C^2$, similar to regular Cantor sets in $\R$ in the study of homoclinic bifurcations for surface diffeomorphisms. 

The study of homoclinic bifurcations has proved to be fruitful in the understanding of dynamics for surface diffeomorphisms. Complicated dynamical phenomena arise from homoclicic bifurcations. For example the Newhouse phenomenon, which is the coexistence of an infinite number of periodic attractors for a generic set, inside an open set, of diffeomorphism.

The scale recurrence lemma was an important step in the solution to Palis conjecture, about the arithmetic difference of Cantor sets, by Moreira and Yoccoz. They proved that there is an open and dense subset, inside the set of pairs of regular Cantor sets with sum of Hausdorff dimensions bigger than one, such that any pair $(K_1,K_2)$ in this subset verifies $int(K_1-K_2)\neq \emptyset$. The theorem of Moreira and Yoccoz is for regular Cantor sets in the real line. Together with Araujo we are working in proving an analogous result for conformal regular Cantor sets in the complex plane. The scale recurrence lemma in these pages is a fundamental tool for this future work.  

Furthermore, Moreira and Yoccoz were able to use their solution to Palis conjecture in the study of homoclinic bifurcations for surface diffeomorphisms (see \cite{MY2}). They proved that given a surface difeomorphism $F$ with a homoclinic quadratic tangency associated to a horseshoe with dimension larger than one, the set of diffeomorphisms close to $F$ presenting a stable tangency has positive density at $F$. One of the main reasons to study conformal regular Cantor sets is to apply the ideas in \cite{MY2} to the context of homoclinic bifurcations for automorphims of $\C^2$, the scale recurrence lemma certainly is an important step in this direction.

Another development in the subject was given by Lopez \cite{TJE}. He generalized the work \cite{MY} for a product of several Cantor sets in the real line. In this paper we will consider a scale recurrence lemma for a product of several Cantor sets in the complex plane.

On the other hand, we have the dimension formula as an application of the scale recurrence lemma. The study of this type of dimension formulas is motivated by a classical theorem of Marstrand, generalized by Mattila and others. Denote by $G(m,l)$ the set of $l$-dimensional linear subspaces of $\R^m$, for $s\in G(m,l)$ denote by $\pi_s$ the orthogonal projection on $s$. Marstrand theorem states that given $F\subset \R^m$, a Borel subset, we have
\[HD(\pi_s(F))=\min\{l, HD(F)\},\]
for almost all $s\in G(m,l)$, with respect to a volume measure on $G(m,l)$. In the particular case when $F=K_1\times ....\times K_n$ is a product of regular Cantor sets, one has $HD(K_1\times ....\times K_n)=HD(K_1)+...+HD(K_n)$. Thus, our formula, Eq. (\ref{eq:DimFor}), corresponds to Marstrand formula replacing $\pi_s$ by $h$. The difference between our result and the classical Marstrand theorem is that our theorem gives explicit hypothesis for $h$ in order to obtain the dimension formula, our theorem is not an "almost all" result, it holds under explicit generic conditions on the map $h$ and the Cantor sets. The formal statement of the dimension formula proven in this paper is the following:
\begin{dmf}
Let $K_1$,..., $K_n$ be $C^m$, $m\geq 2$, conformal regular Cantor sets generated by expanding maps $g_1$,...,$g_n$, respectively. Suppose all of them are not essentially affine. Assume that there exist periodic points $p_j \in K_j$, with period $n_j$, for $1\leq j\leq n$, such that if we write
\[Dg_j^{n_j}(p_j)=\frac{1}{r_j} R_{-v_j},\]
where $R_v$ is the rotation matrix by an angle $v\in \T$, then
\begin{align*}
(\log r_1,0,..,0&;v_1,0,..,0),\\
&\vdots \\
(0,...,\log r_{n-1}&;0,...,v_{n-1},0),\\
(-\log r_n,...,-\log r_n &;0,...,0,v_n),
\end{align*}
generate a dense subgroup of $\R^{n-1}\times \T^{n}$. Let $h$ be any $C^{1}$ function defined on a neighborhood of $K_1\times...\times K_n$ into $\R^l$ such that there exist a point $x_0\in K_1\times ...\times K_n$ where $Dh(x_0)$ verifies the transversality hypotheses, then
\[HD(h(K_1\times...\times K_n))=\min \{l,HD(K_1)+...+HD(K_n)\}.\]
\end{dmf}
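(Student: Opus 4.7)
The upper bound $HD(h(K_1\times\cdots\times K_n))\le \min\{l,\sum_{i=1}^n HD(K_i)\}$ is immediate: the image lies in $\R^l$, and $h$ is Lipschitz on the compact set $K_1\times\cdots\times K_n$, so combined with the product formula $HD(K_1\times\cdots\times K_n)=\sum_i HD(K_i)$ for regular Cantor sets one obtains the second bound as well. The substantive content is the lower bound, which I would establish through an energy/potential-theoretic argument in the spirit of \cite{M}.

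Fix $s<\min\{l,\sum_i HD(K_i)\}$ and pick Gibbs measures $\nu_i$ on $K_i$ whose Hausdorff dimensions are close enough to $HD(K_i)$ that their sum still exceeds $s$. Set $\nu=\nu_1\times\cdots\times\nu_n$ and $\mu=h_*\nu$, a Borel probability measure supported on $h(K_1\times\cdots\times K_n)$. By Frostman's lemma it suffices to show that the $s$-energy
\[I_s(\mu)=\iint \frac{d\nu(x)\,d\nu(x')}{|h(x)-h(x')|^s}\]
is finite. The natural strategy is to decompose $(K_1\times\cdots\times K_n)^2$ according to the Markov structure: to each pair $(x,x')$ associate a joint scale $(k_1,\ldots,k_n)$, where $k_j$ is the greatest depth at which $x_j,x'_j$ lie in a common cylinder of $K_j$. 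The energy becomes a sum over joint scales, and the task reduces to estimating, for each such scale, the measure of the associated slab together with a uniform lower bound for $|h(x)-h(x')|$.

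The lower bound on $|h(x)-h(x')|$ comes from the $C^1$ approximation $h(x')\approx h(x)+Dh(x)(x'-x)$. At the distinguished point $x_0$ the transversality hypothesis says that $Dh(x_0)$ is effective on the relevant directions of the product. The density hypothesis, asserting that the $n$ listed vectors generate a dense subgroup of $\R^{n-1}\times\T^n$, means that products of the form $\prod_j Dg_j^{n_j k_j}(p_j)=\prod_j r_j^{-k_j}R_{-k_j v_j}$ realize, up to arbitrary precision, any prescribed combination of scalings and rotations; together with the non-essentially-affine hypothesis this allows the transversality at $x_0$ to be transported, via the expanding dynamics, to $\nu$-typical points at every scale. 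This step is the crucial novelty of the complex setting, where rotations must be tracked alongside scales — hence the formulation of the density hypothesis in $\R^{n-1}\times\T^n$ rather than in a purely real scale space.

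Finally, I would invoke Theorem A (the complex scale recurrence lemma) to make the estimates above uniform across generations. The lemma guarantees that, starting from any initial configuration, a positive fraction of the descendant joint-scale configurations at every subsequent generation remains in a compact region of scale space on which the transversality transport is uniformly effective. Summing the resulting contributions to $I_s(\mu)$ yields a convergent geometric series under the condition $\sum_i\dim\nu_i>s$, which by the choice of the $\nu_i$ is achievable for any $s<\sum_i HD(K_i)$; letting $s\to\min\{l,\sum_i HD(K_i)\}$ completes the proof of the lower bound. The main obstacle throughout is the combined treatment of conformal freedom (scaling \emph{and} rotation) with multi-scale control: propagating a single pointwise transversality into scale-uniform transversality at generic points, across all joint scales contributing to the energy sum, is precisely the difficulty that the new complex scale recurrence lemma and the group-density hypothesis are designed to overcome.
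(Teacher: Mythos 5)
Your upper bound is fine, but the lower-bound strategy has a genuine gap, and it is not the route the paper (or \cite{M}) takes. The paper does not estimate an $s$-energy of $h_*\nu$; it builds a rooted tree of nested subsets of $\R^l$ with pairwise disjoint children and applies the mass distribution principle (Corollary \ref{distribucion}). The difference is not cosmetic: finiteness of $I_s(h_*\nu)$ for the \emph{fixed} map $h$ would require controlling the $\nu\times\nu$-measure of \emph{all} near-collision pairs $(x,x')$ at every joint scale, whereas the scale recurrence lemma and the discrete Marstrand property only deliver a \emph{positive proportion} of good configurations -- at least half of the tuples $(\ul{a}^1,\dots,\ul{a}^n)$, at least $c_3\rho^{-\sum d_i}$ good children, and a subfamily $T'$ of size $\ge [3L(L+\tau)]^{-l}(c_3^2/4a)\rho^{-\sum d_i}$ with disjoint images (Propositions \ref{prop:marstrand} and Lemma \ref{perturbacion}). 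In the tree construction the bad children are simply discarded; in an energy sum they cannot be, and nothing in the hypotheses rules out that they contribute a divergent amount. So "summing the resulting contributions to $I_s(\mu)$" does not close: you would be summing only over the good part of the decomposition, which bounds the energy of a different measure, at which point you are implicitly re-deriving the mass distribution argument.

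Two further points. First, your account of how the single-point transversality and the density hypothesis enter is not what actually happens: transversality at $x_0$ is used through Lemma \ref{ren:atr} (the renormalized map $L\circ h\circ(f_{\ul{a}^1},\dots,f_{\ul{a}^n})$ converges to $Dh(x_0)\circ A(s)\circ(k^{\tb^1},\dots,k^{\tb^n})$, so the discrete Marstrand property for the fixed linear map $Dh(x_0)$ suffices after the perturbation lemma), while the density hypothesis is used via Kronecker's theorem (Lemma \ref{lem:genhyp}) to show that the renormalization orbit of \emph{every} initial $(\tb^1,\dots,\tb^n,s)$ can be steered into one of the sets $F^*$ produced by Theorem A -- not to "transport transversality to typical points". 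Second, your sketch does not address the case $\sum_i HD(K_i)\ge l$, which the paper handles by extracting sub-Cantor sets $\tilde K_j\subset K_j$ containing the periodic points $p_j$ with $l-\epsilon<\sum_i HD(\tilde K_i)<l$ and applying the first case to them.
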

The transversality hypotheses means that for any subset $A\subset \{1,...,n\}$, the linear map $Dh(x_0): \C^n\to \R^l$ satisfies
\[dim(Im(Dh(x_0)|_{\{z_j=0:j\notin A\}}))=\min \{l,2\cdot \# A\}.\]
This is the minimum assumption one needs in order to have the dimension formula for linear maps. Proper definitions of all other objects are given in the next section.

This type of problem has already been investigated by other authors, we mention some of them. Peres and Shmerkin \cite{PS} proved that for $K_1, K_2 \subset \R$ attractors for self-affine i.f.s. (iterated functions system) given by maps $\{r_ix+t_i\}_{i=1}^{n}$, $\{r_i'x+t_i'\}_{i=1}^{n'}$, if there are $j,k$ such that $\log (r_j)/ \log (r_k')$ is irrational, then 
\[HD(K_1+ \lambda \cdot K_2)= \min\{1, HD(K_1)+HD(K_2)\},\]
for all $\lambda\neq 0$.

On the other hand, Moreira \cite{M} studied the same formula for $K_1, K_2\subset \R$ regular Cantor sets. He proved that the formula holds provided one of the Cantor sets is not essentially affine. Moreira's proof uses the scale recurrence lemma of \cite{MY}. %Moreira's result was first announced in \cite{M3}, without giving the formal proof. A version was written by P. Shmerkin in \cite{shmerkin}, after Moreira explained it to him. Finally, Moreira wrote his own version of the proof in \cite{M}.

In another work, Hochman and Shmerkin \cite{HS} proved a dimension formula without assuming any type of affinity or non-affinity in the attractors or Cantor sets. They proved (in fact, this is a corollary of their main theorem) that for $K_1,...,K_n$ attractors for i.f.s. on $\R$, one has
\[HD(\lambda_1 K_1+...+\lambda_n K_n)= \min\{1, HD(K_1)+...+HD(K_n)\},\]
for all $\lambda_i\neq 0$, $i=1,...,n$, provided that a certain set is dense in the group $(\R^n,+)/\Delta$, where $\Delta$ is the diagonal. This set depends on the derivative of the contractions of the i.f.s. on periodic points. The technique used by Hochman and Shmerkin is different from the approach of Moreira.

Apart from the motivations given by Marstrand theorem and dynamical systems, there are other reasons to study sets of the form $K_1+K_2$, where $K_1, K_2$ are dynamically defined Cantor sets. There are applications in number theory as well. In \cite{M2}, Moreira used his dimension formula to prove that fractal dimensions of the Lagrange spectrum grow continuously. More precisely, he proved that the function
\[d(t)=HD(L\cap (-\infty , t)),\]
where $L\subset \R$ is the Lagrange spectrum, is continuous.

In this paper, we will adapt the methods used by Moreira and Yoccoz to the context of Cantor sets in the complex plane. We will consider an arbitrary finite number of Cantor sets, not just two. This will lead us to consider a new type of scale recurrence lemma for complex Cantor sets. In section 2 we give basic definitions and results. In this section we state, without proof, the scale recurrence lemma. Section 3 is dedicated to the proof of the dimension formula. Finally, in section 4 we prove the scale recurrence lemma.

\subsection{Notation}

We briefly present some notation and conventions used in the paper.
\begin{itemize}
    \item The equality $P:=Q$ will mean that the quantity $P$ is defined by the expression $Q$.
    \item We will work with the space $\C^n$ and identify it with $\R^{2n}$. We will usually write elements of $\C^n$ as $(z_1,...,z_n)$, where $z_j\in \C$. When we say that $B:\C^n \to \R^l$ is linear, we mean $\R$-linear.
    \item We consider $\R^m$ endowed with the $l^2$ norm, denoted by $|\cdot|$. For $x=(x_1,...,x_m)\in \R^m$, the $l^2$ norm is given by $|x|^2:=x_1^2+...+x_m^2$.
    \item For a set $X$ contained in a metric space $(Y,d_Y)$, we will denote its diameter by $diam(X)$. For $\delta>0$, we will consider the $\delta$-neighborhood of $X$, which is defined by
    \[V_{\delta}(X)=\{y\in Y: d_Y(y,X)<\delta\}.\]
    The open ball centered at $x$ with radius $\delta$ is denoted by $B_{\delta}(x)$. Given $X_1, X_2 \subset Y$, we denote the distance between $X_1$ and $X_2$ by $dist(X_1,X_2)$.
    \item We denote the Hausdorff dimension of a set $X$ by $HD(X)$.
    \item Given a finite set $X$, we denote the number of its elements by $\# X$.
    \item For a set $X$ in a topological space, we denote its closure by $\overline{X}$ and its interior by $int(X)$.
    \item Given a linear map $A:\R^m\to \R^k$ between Euclidean spaces, we usually identify $A$ with its matrix representation with respect to the canonical bases. $\|A\|$ will denote the norm of $A$, which is defined as $\sup_{x\neq 0} \frac{|Ax|}{|x|}$. We will also use the minimum norm of $A$ (which is not a norm), denoted by $m(A)$ and defined as $\inf_{x\neq 0} \frac{|Ax|}{|x|}$. We will say that $A$ is conformal if it is a linear isomorphism and $\|A\|=m(A)$, this implies that $A$ preserves angles and $|Ax|=\|A\|\cdot |x|$ for all $x\in \R^m$.
    \item $\T$ denotes the space $\R/(2\pi\Z)$. It is a commutative group, we endow $\T$ with the unique invariant distance giving diameter $2\pi$ to the space. Denote by $\|v\|$ the distance from $v\in \T$ to the zero element $0\in \T$.
    \item Given functions $\phi$ and $\psi$ with intersecting domains, we denote by $\|\phi-\psi\|$ the supremum distance between the functions. This is
    \[\|\phi-\psi\|=\sup_{x\in Dom(\phi) \cap Dom(\psi)} |\phi(x)-\psi(x)|,\]
    where $Dom(\phi)$, $Dom(\psi)$ are the domains of $\phi$, $\psi$, respectively. However, when $A$, $B$ are linear maps then $\|A-B\|$ will denote the norm of the linear map $A-B$.
\end{itemize}

\section{Basic Definitions}\label{BDef}

In this section we define the objects and present the principal tools that will play a role in the paper. Most of the proofs of the facts stated in this section follow from standard techniques, thus we leave them without proof. For proofs we refer the reader to chapter 1 of \cite{Z}.

\subsection{Conformal Regular Cantor Set}
A $C^m$ \textit{regular Cantor set} (or \textit{dynamically defined Cantor set}) on the complex plane is given by the following data:
\begin{itemize}
\item A finite set $\mathbb{A}$ of \textit{letters} and a set $B\subset \mathbb{A}\times \mathbb{A}$ of \textit{admissible} pairs.
\item For each $a\in \mathbb{A}$ a compact connected set $G(a)\subset \C$.
\item A $C^m$ function $g:V\to \C$ defined in an open neighbourhood $V$ of $\bigsqcup_{a\in \mathbb{A}} G(a)$.
\end{itemize}
This data must verify the following assumptions:
\begin{itemize}
\item The sets $G(a)$, $a\in \mathbb{A}$, are pairwise disjoint.
\item $(a,b)\in B$ implies $G(b)\subset g(G(a))$, otherwise $G(b)\cap g(G(a))=\emptyset$.
\item For each $a\in \mathbb{A}$ the restriction $g|_{G(a)}$ can be extended to a $C^m$ diffeomorphism from an open neighbourhood of $G(a)$ onto its image such that $m(Dg)>1$ (where $m(A)=\inf_{v\neq 0} \frac{|Av|}{|v|}$ is the minimum norm of the linear map $A$).
\item The subshift $(\Sigma, \sigma)$ induced by $B$ 
\[\Sigma^+=\{\ul{a}=(a_0,a_1,...)\in \mathbb{A}^{\mathbb{N}}:(a_i,a_{i+1})\in B,\,\,\forall i\geq 0\},\]
$\sigma(a_0,a_1,a_2,...)=(a_1,a_2,...)$ is topologically mixing.
\end{itemize}

Once we have such data we can define a Cantor set (i.e. totally disconnected, perfect compact set) on the complex plane
\[K=\bigcap_{n\geq 0}g^{-n}\left(\bigsqcup_{a\in \mathbb{A}} G(a)\right).\]
We will say that the regular Cantor set is conformal if for all $x\in K$ the linear map $Dg(x):\R^2\to \R^2$ is conformal. We will write only $K$ to represent all the data that takes to define a conformal regular Cantor set. All Cantor sets in this paper will be conformal regular Cantor sets, we will usually refer to them just as Cantor sets. 

The degree of differentiability, $m$, can be any real number bigger than one. If $m$ is not an integer then $g$ being $C^m$ means that it is $C^{[m]}$, where $[m]$ is the integer part of $m$, and $D^{[m]}g$ is Holder with exponent $m-[m]$. To prove our results we will assume that $m\geq 2$.

We can actually suppose that the sets $G(a)$ verify $G(a)=\overline{int(G(a))}$, this assumption will be relevant when proving the dimension formula because we want the sets $G(a)$ to contain volume.

\begin{lem}\label{lem:g*}
Let $K$ be a conformal Cantor set, then there exist a family of sets $G^*(a)\subset \C$, for $a\in \mathbb{A}$, such that:
\begin{itemize}
\item[(i)] $G^*(a)$ is open and connected.
\item[(ii)] $G(a)\subset G^*(a)$, and $g|_{G(a)}$ can be extended to an open neighborhood of $\overline{G^*(a)}$, such that it is a diffeomorphism from this neighbourhood onto its image and $m(Dg)>1$.
\item[(iii)] The sets $\overline{G^*(a)}$, $a\in \mathbb{A}$, are pairwise disjoint. 
\item[(iv)] $(a,b)\in B$ implies $\overline{G^*(b)}\subset g(G^*(a))$, and $(a,b)\notin B$ implies $\overline{G^*(b)}\cap \overline{g(G^*(a))}=\emptyset$. 
\end{itemize}
\end{lem}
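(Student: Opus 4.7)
The plan is to produce each $G^*(a)$ by slightly thickening $G(a)$. Concretely, I would fix some small $\delta>0$ (to be chosen later) and set $G^*(a)$ to be the connected component of the open $\delta$-neighbourhood $V_\delta(G(a))$ that contains $G(a)$. This choice makes (i) automatic: $G^*(a)$ is open by construction and connected because $G(a)$ itself is compact and connected, so any open neighbourhood containing it has a unique connected component meeting $G(a)$. Moreover, $G(a)\subset G^*(a)\subset V_\delta(G(a))$, and for any $\delta'<\delta$ small we can also arrange $\overline{G^*(a)}\subset V_{2\delta}(G(a))$, which will be the only feature of $G^*(a)$ that I actually use in the verifications.

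For the remaining items I would choose $\delta$ small enough to simultaneously ensure four things, each of which uses only compactness and the openness of the hypotheses in the definition of a regular Cantor set. First, since the finitely many sets $G(a)$ are pairwise disjoint compacta, the minimum distance between them is positive, so for $\delta$ sufficiently small the closures $\overline{G^*(a)}$ remain pairwise disjoint, giving (iii). Second, for each $a\in\mathbb{A}$ the map $g$ already extends to a $C^m$ diffeomorphism on some open neighbourhood $U_a$ of $G(a)$ with $m(Dg)>1$ there; by compactness of $G(a)$ and openness of $U_a$ and of the condition $m(Dg)>1$, for $\delta$ small we have $\overline{G^*(a)}\subset U_a$ and the expansion bound persists on $\overline{G^*(a)}$, which gives (ii).

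The Markov property (iv) is the step where one must be a bit careful. For $(a,b)\notin B$ the sets $G(b)$ and $g(G(a))$ are disjoint compacta, hence at positive distance; and $g$ is uniformly continuous on $\overline{U_a}$, so $g(\overline{G^*(a)})$ lies in an arbitrarily small neighbourhood of $g(G(a))$ if $\delta$ is small. Thus for $\delta$ small enough $\overline{G^*(b)}\cap\overline{g(G^*(a))}=\emptyset$. For $(a,b)\in B$ we have $G(b)\subset g(G(a))$; since $g|_{U_a}$ is a diffeomorphism, $g(G^*(a))$ is an open set containing $g(G(a))$, and in particular an open neighbourhood of the compact set $G(b)$, so it contains $\overline{G^*(b)}$ once $\delta$ is small enough. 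Taking the minimum of the finitely many $\delta$'s produced by these four requirements yields a single $\delta$ that works for all $a,b$ simultaneously.

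The only real obstacle is ensuring that all four requirements can be met with a \emph{single} $\delta>0$, but since $\mathbb{A}$ is finite and each requirement is an open condition that holds at $\delta=0^+$, a finite intersection argument suffices. I would expect no issue with smoothness or with the topological mixing condition, as those are properties of $g$ and of the subshift $(\Sigma,\sigma)$, not of the particular choice of neighbourhoods, and they are preserved under the passage from $G(a)$ to $G^*(a)$.
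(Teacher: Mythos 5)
The paper does not prove this lemma (it refers to chapter 1 of \cite{Z}), so your proposal has to stand on its own. Your construction and your verifications of (i), (ii), (iii) and of the second half of (iv) are fine. The problem is the first half of (iv), and your closing remark that ``each requirement is an open condition that holds at $\delta=0^+$'' is exactly where the argument breaks. When $(a,b)\in B$ you argue that $g(G^*(a))$ is an open neighbourhood of the compact set $G(b)$, ``so it contains $\overline{G^*(b)}$ once $\delta$ is small enough.'' But $g(G^*(a))$ is not a fixed open set: it shrinks to $g(G(a))$ as $\delta\to 0$, and the Markov hypothesis only gives $G(b)\subset g(G(a))$, with $G(b)$ possibly touching $\partial\bigl(g(G(a))\bigr)$. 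So as $\delta\to 0$ both sides of the desired inclusion $\overline{G^*(b)}\subset g(G^*(a))$ degenerate to sets that are merely nested, not compactly nested, and no ``finite intersection of open conditions'' applies. The circularity cannot be broken by ordering the letters either: since the subshift is mixing, every letter lies on an admissible cycle (e.g.\ one may have $(a,a)\in B$, where the requirement reads $\overline{G^*(a)}\subset g(G^*(a))$ with the same $\delta$ on both sides).

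The missing ingredient is the expansion $m(Dg)>\lambda>1$, which you never use for this purpose. Fix $\delta_0$ with $\overline{V_{\delta_0}(G(a))}\subset U_a$ and $m(Dg)\ge\lambda>1$ there, and let $\epsilon_0>0$ be such that $V_{\epsilon_0}\bigl(g(G(a))\bigr)\subset g\bigl(V_{\delta_0}(G(a))\bigr)$. For $y$ with $dist\bigl(y,g(G(a))\bigr)<\epsilon\le\epsilon_0$, the segment from $y$ to its nearest point of $g(G(a))$ lies in $V_{\epsilon_0}\bigl(g(G(a))\bigr)$, and pulling it back by $g^{-1}$ (which contracts by $\lambda^{-1}$) gives a path of length $<\lambda^{-1}\epsilon$ from a point of $G(a)$ to $g^{-1}(y)$; this path stays in $V_{\lambda^{-1}\epsilon}(G(a))$, so $g^{-1}(y)$ lies in the distinguished component. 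Hence $V_{\lambda\delta}\bigl(g(G(a))\bigr)\subset g(G^*(a))$ whenever $\lambda\delta\le\epsilon_0$. Since $G(b)\subset g(G(a))$ gives $\overline{G^*(b)}\subset\overline{V_\delta(G(b))}\subset V_{\lambda\delta}\bigl(g(G(a))\bigr)$ because $\lambda>1$, the inclusion follows for a single small $\delta$, uniformly over the finitely many pairs. With this repair the rest of your argument goes through.
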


\subsection{Limit Geometry}
Associated to a Cantor set $K$ we define the sets
\begin{align*}
	\Sigma^{fin}&=\{(a_0,...,a_n):(a_i,a_{i+1})\in B \},\\
	\Sigma^{-}&=\{(...,a_{-n},...,a_{-1},a_0): (a_i,a_{i+1})\in B\}.
\end{align*}
Given $\ul{a}=(a_0,...,a_n)$, $\ul{b}=(b_0,...,b_m)$, $\tb^1=(...,\theta^1_{-1},\theta^1_0)$, $\tb^2=(...,\theta^2_{-1},\theta^2_0)$, we will use the following notations:
\begin{itemize}
\item If $a_n=b_0$, $\ul{a}\ul{b}=(a_0,...,a_n,b_1...,b_m)$.
\item If $\theta^1_0=a_0$, $\tb^1\ul{a}=(...,\theta^1_{-1},\theta^1_0,a_1,...,a_n)$.
\item If $\theta^1_0=\theta^2_0$, $\tb^1\wedge \tb^2= (\theta^1_{-j},...,\theta^1_{0})$, where $j$ is such that $\theta^1_{-i}=\theta^2_{-i}$, for all $0\leq i\leq j$, and $\theta^1_{-j-1}\neq \theta^2_{-j-1}$.
\item If $\theta^1_0=a_n$, $\tb^1\wedge \ul{a}= (\theta^1_{-j},...,\theta^1_{0})$, where $j$ is such that $\theta^1_{-i}=a_{n-i}$, for all $0\leq i\leq j$, and $\theta^1_{-j-1}\neq a_{n-j-1}$.
\end{itemize}

For $\ul{a}=(a_0,...,a_n)\in \Sigma^{fin}$ define
\[G(\ul{a})=\{x\in \bigsqcup_{a\in \mathbb{A}} G(a) : g^j(x)\in G(a_j),\,\,j=0,1,..,n\},\]
and the function $f_{\ul{a}}:G(a_n) \to G(\ul{a})$ given by
\[f_{\ul{a}}=g|_{G(a_0)}^{-1}\circ ... \circ g|_{G(a_{n-1})}^{-1}.\]
Denote by $K(\ul{a})$ the set $K\cap G(\ul{a})$. For each $a\in \mathbb{A}$ we choose an arbitrary point $c_a \in K(a)$, using this, define $c_{\ul{a}}\in G(\ul{a})$ by
\[c_{\ul{a}}=f_{\ul{a}}(c_{a_n}).\]

Notice that $Df_{\ul{a}}(c_{a_n})$ is a conformal matrix in $\R^2$, then it is equal to a positive real number times a rotation matrix, denote the angle of rotation by $v_{\ul{a}}\in \R/(2\pi \Z).$ In this way we have a preferred point and direction for each $G(\ul{a})$. We also define \[r_{\ul{a}}=diam(G(\ul{a})).\]

Given $\tb=(...,\theta_{-n},...,\theta_{-1},\theta_{0})\in\Sigma^{-}$, let $\tb^n=(\theta_{-n},...,\theta_0)$ and define $k^{\tb}_n:G(\theta_0)\to \C$ by
\[k^{\tb}_n=\phi_{\tb^n}\circ f_{\tb^n},\]
where $\phi_{\tb^n}$ is the unique map in 
\[Aff(\C)=\{A(z)=az+b:a,b\in\C,\,\,a\neq 0\}\]
such that $\phi_{\tb^n}(c_{\tb^n})=0$, $D\phi_{\tb^n}(c_{\tb^n})e^{iv_{\tb^n}}\in \mathbb{R}^+$, $diam(\phi_{\tb^n}(G(\tb^n)))=1$. For the next theorem we consider $k^{\tb}_n$ extended to a small open neighborhood $G^*(\theta_0)$ of $G(\theta_0)$ (as in lemma \ref{lem:g*}).

\begin{thm}
Let $K$ be a $C^m$ conformal Cantor set. For any $\tb\in \Sigma^-$, the family of functions $k^{\tb}_n:G^*(\theta_0)\to \C$ converges in the $C^{[m]}$ topology, with an exponential rate of convergence independent of $\tb$, to a $C^m$ function $k^{\tb}:G^*(\theta_0)\to \C$. The function $k^{\tb}$ is a diffeomorphism onto its image and the derivative $Dk^{\tb}(x)$ is conformal for all $x\in K(\theta_0)$.

Moreover, if $m\geq 2$ then there is a constant $C>0$ such that given $\tb^1, \tb^2 \in \Sigma^-$ ending with the same letter
\begin{align*} 
\sup_z \left[|k^{\tb^1}\circ (k^{\tb^2})^{-1}(z)-z|+ \|D(k^{\tb^1}\circ (k^{\tb^2})^{-1})(z)-I\|\right]\leq C diam (G(\tb^1\wedge \tb^2)).
\end{align*}
\end{thm}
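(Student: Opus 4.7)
The plan is to show that the sequence $(k^\tb_n)_{n\geq 0}$ is Cauchy in the $C^{[m]}$ topology on $G^*(\theta_0)$, with consecutive differences decaying geometrically at a rate independent of $\tb$, and then to identify the limit's derivative at points of $K(\theta_0)$ as conformal.

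The first ingredient is bounded distortion and exponential shrinkage of cylinders: the hypothesis $m(Dg)>1$ together with $g\in C^m$, $m>1$, yields via the chain rule and H\"older control of $D^{[m]}g$ that for every $\ul{a}\in \Sigma^{fin}$ of length $n+1$ one has $r_{\ul{a}}\leq C\mu^{-n}$ for some $\mu>1$, and that the inverse branches $f_{\ul{a}}$ have uniformly bounded $C^{[m]}$ distortion. The crux is then the one-step comparison. Since $\tb^{n+1}$ is obtained by prepending $\theta_{-n-1}$ to $\tb^n$, one has $f_{\tb^{n+1}}=g|_{G(\theta_{-n-1})}^{-1}\circ f_{\tb^n}$ and therefore
\[k^\tb_{n+1}\circ (k^\tb_n)^{-1}=\phi_{\tb^{n+1}}\circ g|_{G(\theta_{-n-1})}^{-1}\circ \phi_{\tb^n}^{-1}.\]
Using the defining normalizations of $\phi_{\tb^n}$ and $\phi_{\tb^{n+1}}$ (base point, direction $v_{\cdot}$, unit diameter) together with the conformality of $Dg^{-1}(c_{\tb^n})$ (because $c_{\tb^n}\in K$), a direct computation shows that this renormalized map sends $0$ to $0$ and has derivative at $0$ equal to a positive real scalar, uniformly bounded above and bounded away from $0$ by bounded distortion. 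Taylor-expanding $g|^{-1}$ around $c_{\tb^n}$ on $G(\tb^n)$, the affine rescalings absorb the scale $r_{\tb^n}$ and yield
\[\|k^\tb_{n+1}\circ (k^\tb_n)^{-1}-L_n\|_{C^{[m]}}\leq C\, r_{\tb^n}^{\alpha},\]
where $L_n$ is the linear part at $0$ and $\alpha=m-[m]$ (interpreted as $1$ when $m$ is an integer $\geq 2$). Summing the geometric series gives the Cauchy property at a geometric rate, and passing the uniform H\"older estimate on $D^{[m]}k^\tb_n$ to the limit by Arzel\`a--Ascoli provides $C^m$ regularity of $k^\tb$. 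The conformality of $Dk^\tb(x)$ at $x\in K(\theta_0)$ passes to the limit from $Dk^\tb_n(x)=D\phi_{\tb^n}\circ Df_{\tb^n}(x)$, both factors being conformal (the latter because $Dg$ is conformal along the orbit in $K$).

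For the moreover part, the assumption $m\geq 2$ upgrades $\alpha$ to $1$ in the one-step estimate, so each one-step map is $O(r_{\tb^n})$-close to the identity in $C^1$. Writing $\tb^1\wedge \tb^2=(\theta_{-j},\ldots,\theta_0)$, the equality $\tb^{1,n}=\tb^{2,n}$ for $n\leq j$ gives $k^{\tb^1}_j=k^{\tb^2}_j$, whence
\[k^{\tb^1}\circ (k^{\tb^2})^{-1}=\bigl(k^{\tb^1}\circ (k^{\tb^1}_j)^{-1}\bigr)\circ \bigl(k^{\tb^2}_j\circ (k^{\tb^2})^{-1}\bigr).\]
Each factor is an infinite telescoping composition of one-step maps $k^{\tb^i}_{n+1}\circ (k^{\tb^i}_n)^{-1}$ for $n\geq j$. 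Since $r_{\tb^{i,n}}\leq C\mu^{-n}$, the geometric series sums to $O(\mu^{-j})=O(diam(G(\tb^1\wedge \tb^2)))$, and composing two maps that are close to the identity in $C^1$ preserves this bound, giving the stated estimate for both $|\cdot -z|$ and $\|D(\cdot)-I\|$ simultaneously.

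The main obstacle is the bookkeeping in the one-step estimate: one must verify that the affine normalizations align the rotation angles $v_{\tb^n}$, $v_{\tb^{n+1}}$ with the rotation of $Dg^{-1}(c_{\tb^n})$ in exactly the right way so that the derivative at $0$ of the renormalized map is a positive real scalar times the identity (not merely a general conformal map), guaranteeing that the deviation from the linear part is of order $r_{\tb^n}^{\alpha}$ rather than $O(1)$. The conformality of $Dg$ on the whole Cantor set, and not only at a few distinguished points, is indispensable for this cancellation of angles.
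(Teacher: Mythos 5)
The paper never proves this theorem: it is stated in Section \ref{BDef} with an explicit deferral to chapter 1 of \cite{Z}, so there is no in-paper argument to compare against. Your proposal is the standard telescoping/bounded-distortion argument used in \cite{MY} and \cite{Z}: the identity $k^{\tb}_{n+1}\circ (k^{\tb}_n)^{-1}=\phi_{\tb^{n+1}}\circ g|_{G(\theta_{-n-1})}^{-1}\circ \phi_{\tb^n}^{-1}$ is correct, your computation that the normalizations force the derivative at $0$ to be a positive real scalar is correct (it uses exactly $c_{\tb^{n+1}}=g|^{-1}(c_{\tb^n})$, $e^{iv_{\tb^{n+1}}}$ being the rotation of $Dg^{-1}(c_{\tb^n})e^{iv_{\tb^n}}$, and the conformality of $Dg$ on $K$), and the factorization of $k^{\tb^1}\circ (k^{\tb^2})^{-1}$ through the common truncation $k^{\tb^1}_j=k^{\tb^2}_j$ is exactly the right decomposition for the moreover part.

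One step needs repair. You bound the tail of the telescoping series by $\sum_{n\geq j}C\mu^{-n}=O(\mu^{-j})$ and then assert $O(\mu^{-j})=O(diam(G(\tb^1\wedge\tb^2)))$. That comparison goes the wrong way: $\mu=\inf m(Dg)>1$ only yields the upper bound $diam(G(\tb^1\wedge\tb^2))\leq C\mu^{-j}$, while $diam(G(\tb^1\wedge\tb^2))$ can be as small as $c(\sup\|Dg\|)^{-j}$, so a quantity of order $\mu^{-j}$ need not be $O(diam(G(\tb^1\wedge\tb^2)))$. The fix stays inside your own framework: bound the $n$-th one-step error by $C\,r_{\tb^{i,n}}$ rather than by $C\mu^{-n}$, and use that $G(\tb^{i,n})=f_{(\theta_{-n},\ldots,\theta_{-j})}(G(\tb^{i,j}))$ gives $r_{\tb^{i,n}}\leq \mu^{-(n-j)}r_{\tb^{i,j}}$ for $n\geq j$, so the tail sums to $O(r_{\tb^{i,j}})=O(diam(G(\tb^1\wedge\tb^2)))$ directly. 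With that correction, plus a sentence on why the limit $k^{\tb}$ is injective (uniform lower bounds on $m(Dk^{\tb}_n)$ from bounded distortion, passed to the limit), the argument is complete.
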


The function $k^{\tb}$ is called a limit geometry of $K$. Notice that the convergence being independent of $\tb$ implies that $D^l k^{\tb}$, for $0\leq l\leq [m]$, depends continuously on $\tb$.

For $\tb \in \Sigma^{-}$, $\ul{a}\in \Sigma^{fin}$, such that $\ul{a}$ starts with the last letter of $\tb$, define
\begin{align*}
&G^{\tb}(\ul{a})=k^{\tb}(G(\ul{a})),\,\, K^{\tb}(\ul{a})=k^{\tb}(K(\ul{a})),\,\, c^{\tb}_{\ul{a}}=k^{\tb}(c_{\ul{a}})\\
&exp(iv^{\tb}_{\ul{a}})=\frac{Dk^{\tb}(c_{\ul{a}})}{\|Dk^{\tb}(c_{\ul{a}})\|} exp(iv_{\ul{a}}),\,\, r^{\tb}_{\ul{a}}=diam(G^{\tb}(\ul{a})).
\end{align*}
Let $F_{\ul{a}}^{\tb}$ be the affine map determined by the equation
\[k^{\tb}\circ f_{\ul{a}}=F_{\ul{a}}^{\tb}\circ k^{\tb\ul{a}}.\]
$F_{\ul{a}}^{\tb}$ maps $0$ to $c_{\ul{a}}^{\tb}$ and can be written using $r_{\ul{a}}^{\tb}\in \mathbb{R}^+$, $v_{\ul{a}}^{\tb}\in \mathbb{R}/2\pi\mathbb{Z}$ as
\[F_{\ul{a}}^{\tb}(z)=r_{\ul{a}}^{\tb} exp(iv_{\ul{a}}^{\tb})z+c_{\ul{a}}^{\tb}.\]
\begin{defn}\label{def:NEssAff}
We will say that a $C^m$, $m\geq 2$, Cantor set $K$ is \textit{not essentialy affine} if there exist $\tb^1,\tb^2\in \Sigma^{-}$, ending in the same letter, and $z_0\in K^{\tb^2}(\theta^2_0)$ such that
\[D^2(k^{\tb^1}\circ (k^{\tb^2})^{-1})(z_0)\neq 0.\]
\end{defn}

\subsection{Mass Distribution Principle}

Typically, estimating the Hausdorff dimension from below is harder than from above. One usual technique is the mass distribution principle that we state below.
\begin{prop}
Let $F\subset \R^l$ be a Borel measurable set, $\nu$ a Borel measure with $\nu(F)>0$ and $a,b>0$, $s>0$ such that 
\[\nu(u)\leq a\cdot diam(u)^s,\]
for all $u$ measurable with $diam(u)<b$. Then $H_s(F)\geq \nu(F)/a$, in particular $HD(F)\geq s$.
\end{prop}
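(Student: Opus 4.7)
The plan is to apply the definition of Hausdorff measure directly, using the hypothesis on $\nu$ to control sums of diameters from below. Fix $\delta$ with $0 < \delta < b$, and let $\{U_i\}_{i\in\N}$ be an arbitrary $\delta$-cover of $F$, that is, $F \subset \bigcup_i U_i$ with $\text{diam}(U_i) < \delta$ for every $i$. The first step is to reduce to the case where each $U_i$ is measurable: replacing $U_i$ by its closure $\overline{U_i}$ does not change the diameter, still covers $F$, and yields a Borel (hence $\nu$-measurable) set. So I can assume each $U_i$ is Borel.

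Next, by monotonicity and countable subadditivity of the measure $\nu$, together with the hypothesis $\nu(U) \leq a \cdot \text{diam}(U)^s$ valid whenever $\text{diam}(U) < b$, I obtain
\[
\nu(F) \;\leq\; \nu\!\left(\bigcup_i U_i\right) \;\leq\; \sum_{i} \nu(U_i) \;\leq\; a \sum_{i} \text{diam}(U_i)^s.
\]
Since $\{U_i\}$ was an arbitrary $\delta$-cover of $F$, taking the infimum over all such covers gives $\nu(F) \leq a \cdot H_s^{\delta}(F)$, where $H_s^{\delta}$ is the standard $\delta$-approximating Hausdorff premeasure.

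Finally, letting $\delta \to 0^{+}$ (for every $\delta < b$ the inequality holds, and $H_s^{\delta}(F)$ increases to $H_s(F)$ as $\delta \downarrow 0$), I conclude
\[
H_s(F) \;\geq\; \frac{\nu(F)}{a} \;>\; 0.
\]
Since $H_s(F) > 0$ forces $HD(F) \geq s$ by definition of Hausdorff dimension, the proposition follows.

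The only subtle point is the measurability issue in the first step — a general $\delta$-cover need not consist of Borel sets, so one must justify passing to closures (or, equivalently, note that the infimum defining $H_s^{\delta}$ can be taken over covers by closed sets without changing its value). Beyond that, everything is a direct application of the definitions and countable subadditivity; there is no real obstacle in this proof.
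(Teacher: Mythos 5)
Your proof is correct and is the standard mass distribution argument; the paper itself omits the proof of this proposition (deferring to standard references), and your argument — passing to closures to handle measurability, applying subadditivity of $\nu$ against an arbitrary $\delta$-cover with $\delta<b$, and letting $\delta\to 0$ — is exactly the intended one. No gaps.
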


The next proposition is a consequence of the mass distribution principle and it will be used to prove the desired dimension formula. Its proof is not difficult and can be found in section 1.3 of \cite{Z}.

Let $N$ be the node set of a rooted tree with the property that every node has finite index. $N$ can be described in the following way: there is a marked element $p_0\in N$ called the root of $N$, for each $p\in N$ we have a finite set $Ch(p)\subset N$ called the children of $p$, if $p\neq q$ then $Ch(p)\cap Ch(q)=\emptyset$, for any $q\in N$ there is a sequence $q_0,q_1,...,q_m$ such that $q_0=p_0$, $q_m=q$, and $q_{i+1}\in Ch(q_i)$, $i=0,...,m-1$, such $q$ is called an $m$-level node of $N$. Denote by $I(k)$ the set of $k$-level nodes. $N$ can be written as the disjoint union
\[N=\sqcup_{k=0}^{\infty} I(k).\]

\begin{cor}\label{distribucion}
Suppose we have a set $N$ as described above and assume that for each $p\in N$ we have a Borel measurable set $G(p)\subset \R^l$ with the following properties:
\begin{itemize}
\item[(a)] If $p\in Ch(q)$ then $\overline{G(p)}\subset G(q)$.
\item[(b)] If $p_1,p_2\in Ch(q)$, $p_1\neq p_2$, then $\overline{G(p_1)}\cap \overline{G(p_2)}=\emptyset$.
\item[(c)] The supremum $\sup \{ diam(G(p)): p\in I(k)\}$ goes to zero as $k$ goes to infinity.
\item[(d)] There is a constant $\mu>1$ such that for any $p\in Ch(q)$ we have $diam(G(p))\geq \mu^{-1} diam(G(q))$.
\item[(e)] There is a contant $\mu>1$ such that for any $p\in N$ the set $G(p)$ contains a ball of radius $\mu^{-1}diam(G(p))$.
\item[(f)] There is a number $s>0$ such that for any $q\in N$
\[\sum_{p\in Ch(q)} diam(G(p))^s\geq diam(G(q))^s.\]
\end{itemize}
Let $F$ be the set
\[F=\bigcap_{k=0}^{\infty} \bigcup_{p\in I(k)} G(p).\]
Then $HD(F)\geq s$. 
\end{cor}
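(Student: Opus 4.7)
The plan is to build a Borel probability measure $\nu$ supported on $F$, bound $\nu(U)$ in terms of $diam(U)^s$ for arbitrary measurable sets $U$ of small diameter, and then invoke the mass distribution principle stated right above the corollary.

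First I would construct $\nu$ inductively along the tree. Set $\nu(G(p_0))=diam(G(p_0))^s$ at the root, and for each $q\in N$ distribute its mass among its children by
\[\nu(G(p))=\nu(G(q))\cdot\frac{diam(G(p))^s}{\sum_{p'\in Ch(q)} diam(G(p'))^s},\qquad p\in Ch(q).\]
Condition (f) then gives the key pointwise bound $\nu(G(p))\leq diam(G(p))^s$ for every node $p$, proved by a straightforward induction on the level. Conditions (a)--(c) guarantee that the branches of the tree define a natural continuous surjection from the path space of $N$ onto $F$, and one sees from (b) that the premeasure is well-defined and countably additive on the algebra generated by the sets $G(p)$; extending via Carath\'eodory yields a Borel measure concentrated on $F$ with total mass $diam(G(p_0))^s$.

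Next, given any measurable set $U\subset\R^l$ with $diam(U)>0$, let $P(U)$ be the collection of nodes $p$ such that $G(p)\cap U\neq\emptyset$, $diam(G(p))\leq diam(U)$, and $diam(G(\text{parent of }p))>diam(U)$ (when $p\neq p_0$). If $p_1,p_2\in P(U)$ are distinct, neither can be an ancestor of the other, so going to their youngest common ancestor and invoking (b) shows that the sets $G(p)$ for $p\in P(U)$ are pairwise disjoint. By (d), each such $p$ satisfies $diam(G(p))\geq\mu^{-1}diam(U)$, and by (e) each $G(p)$ contains a ball of radius at least $\mu^{-2}diam(U)$. On the other hand, every $G(p)$ with $p\in P(U)$ meets $U$ and has diameter at most $diam(U)$, so all these disjoint sets lie in a ball of radius $2\,diam(U)$. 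A volume comparison in $\R^l$ then bounds $\#P(U)$ by a constant $C=C(\mu,l)$ independent of $U$. Since $F\cap U\subset\bigcup_{p\in P(U)}G(p)$ (any point of $F\cap U$ lies in a decreasing nest of $G(p)$'s with diameters tending to $0$, hence in exactly one element of $P(U)$), we conclude
\[\nu(U)\leq\sum_{p\in P(U)}\nu(G(p))\leq\sum_{p\in P(U)}diam(G(p))^s\leq C\cdot diam(U)^s.\]

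Finally, applying the mass distribution principle to $\nu$ and $F$ with the constant $a=C$ yields $HD(F)\geq s$. The only step I expect to require real care is the uniform bound on $\#P(U)$: it is where all three geometric hypotheses (d), (e), and the disjointness coming from (b) must be combined, and it is the reason the statement requires the bounded eccentricity condition (e) rather than only the bounded ratio of diameters (d).
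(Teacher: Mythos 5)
Your proof is correct and takes exactly the route the paper intends: the paper omits the argument (deferring to Section 1.3 of \cite{Z}) but presents the corollary as a consequence of the mass distribution principle, and your construction of the measure via proportional distribution along the tree, together with the uniform bound on $\#P(U)$ obtained from (b), (d), (e) and a volume comparison, is the standard way to carry that out. The one step you flag as delicate is indeed the only one requiring care, and your treatment of it is sound.
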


\subsection{Not Essentially Real Cantor Sets}

In this subsection we will present a hypothesis in the Cantor set that will guarantee it is indeed two dimensional. This will be important since it will imply that the renormalization operators, defined in the next section, are acting on the right space.  

\begin{defn}
We will say that a Cantor set $K$ is \textit{essentially real} if there exists $\tb\in\Sigma^-$ such that the limit Cantor set $K^{\tb}(\theta_0)$ is contained in a straight line. 
\end{defn}

It is not difficult to prove that $K$ is essentially real if and only if for every $\tb\in\Sigma^-$ the limit Cantor set $K^{\tb}(\theta_0)$ is contained in a straight line.  Moreover, one can prove that $K$ being essentially real is equivalent to $K$ being contained in a $C^1$ one dimensional manifold embedded on the plane. For the proof of the scale recurrence lemma we will suppose that the Cantor set is not essentially real, in such case one is able to control the quantity of elements $G^{\tb}(\ul{a})$ close to an arbitrary line, this is the content of the next lemma.

Given $c>0,\rho>0$ define
\[\Sigma(c,\rho)=\{\ul{a}\in\Sigma^{fin}:\,\,c^{-1}\rho\leq r_{\ul{a}}\leq c \rho\},\]
we can think of this as the set of $G(\ul{a})$ having approximate size $\rho$. Using standard techniques (see \cite{PT95} or \cite{Z}) one can prove that there is a constant $C>0$, depending only in $c$ and the Cantor set $K$ and not depending on $\rho$, such that
\[C^{-1} \rho^{-HD(K)} \leq \#\Sigma(c,\rho)\leq C \rho^{-HD(K)}.\]
Suppose we have fixed a constant $C_5>0$. Let $(a,b)\in B$, a subset $D\subset \Sigma(C_5,\rho)$ is called a {\em discretization} of $K(a,b)$ of order $\rho$ if
\[\bigcup_{\ul{a}\in D}K(\ul{a})=K(a,b).\]
\begin{lem}\label{lem:cono}
Let $K$ be a Cantor set not essentially real. There exist an angle $\alpha \in (0,\pi/2)$ and numbers $\rho_2>0$, $a\in (0,1)$, depending only on $C_5$ and the Cantor set $K$, such that for any limit geometry $k^{\tb}$, $x\in G^{\tb}(\theta_0)$, line $L$, $s\in \mathbb{A}$, $D$ discretization of $K(\theta_0, s)$ of order less than $\rho_2$
\[\#\{\ul{a}\in D:\,\, G^{\tb}(\ul{a})\cap Cone(x,L,\alpha)\neq \emptyset \}\leq a\cdot \#D.\]
Where $Cone(x,L,\alpha)$ is the set of $z\in \C$ such that the vector $z-x$ forms an angle of measure less than $\alpha$ with the line $L$.
\end{lem}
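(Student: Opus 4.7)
The plan is a two-scale Markov decomposition combined with a compactness argument exploiting the not-essentially-real hypothesis. The key intermediate statement is: there exist $N_0 \in \N$ and $\alpha_0 \in (0, \pi/2)$ such that for every $\tb \in \Sigma^-$, every $x \in G^{\tb}(\theta_0)$, and every line $L$, some word $\ul{b}$ of the form $(\theta_0, s, b_2, \ldots, b_{N_0})$ satisfies $G^{\tb}(\ul{b}) \cap Cone(x, L, \alpha_0) = \emptyset$.

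To prove this claim, introduce the width $W(\tb) := \inf_{L} \sup_{p \in K^{\tb}(\theta_0, s)} dist(p, L)$. For each $\tb$, $W(\tb) > 0$: otherwise $K^{\tb}(\theta_0, s)$ would lie in a line, and via the affine identification $K^{\tb}(\theta_0, s) = F^{\tb}_{(\theta_0, s)}(K^{\tb'}(s))$ with $\tb' := \tb(\theta_0, s)$, the set $K^{\tb'}(s)$ would also lie in a line, contradicting the not-essentially-real hypothesis. A compactness/contradiction argument then yields $\inf_{\tb} W(\tb) \geq c_0 > 0$: a sequence $\tb_n$ with $W(\tb_n) \to 0$ admits lines $L_n \to L^*$ with $K^{\tb_n}(\theta_0, s)$ in shrinking neighborhoods of $L_n$, and Hausdorff-continuity of $\tb \mapsto K^{\tb}(\theta_0, s)$ (inherited from $C^0$-continuity of limit geometries) combined with compactness of $\Sigma^-$ produces a limit $\tb_\infty$ with $K^{\tb_\infty}(\theta_0, s) \subset L^*$, a contradiction.

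Given $\tb, x, L$, let $L_x$ be the line through $x$ parallel to $L$ and pick $p^* \in K^{\tb}(\theta_0, s)$ with $dist(p^*, L_x) \geq c_0$. Since the diameters of the sets $G^{\tb}(\theta_0)$ are uniformly bounded by some $M$, one has $dist(p^*, L_x)/|p^* - x| \geq c_0/M$, so $p^*$ lies outside $Cone(x, L, \arcsin(c_0/M))$. A short triangle-inequality computation (using that $|p^* - x| \geq c_0$ because $x \in L_x$) shows that any set of diameter at most $\epsilon_0 := c_0/2$ containing $p^*$ remains outside $Cone(x, L, \alpha_0)$ with $\alpha_0 := \arcsin(c_0/(2M + c_0))$. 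Choosing $N_0$ large enough that every level-$N_0$ piece $G^{\tb}(\ul{b})$ has diameter at most $\epsilon_0$ uniformly in $\tb$ (possible by uniform expansion of $g$ together with compactness of $\Sigma^-$), the level-$N_0$ piece containing $p^*$ proves the claim.

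Granting the claim, the lemma follows from a routine counting argument. Take $\rho_2$ so small that any discretization at scale $\rho \leq \rho_2$ refines the level-$N_0$ Markov partition, and decompose $D = \bigsqcup_{\ul{b}} D_{\ul{b}}$ along level-$N_0$ ancestors. For the $\ul{b}_*$ provided by the claim, $G^{\tb}(\ul{a}) \subset G^{\tb}(\ul{b}_*)$ for each $\ul{a} \in D_{\ul{b}_*}$, so no such piece meets the cone. The standard counting estimate $\#\Sigma(C_5, \rho) \asymp \rho^{-HD(K)}$, applied both to $D$ and to $D_{\ul{b}_*}$ (viewed as a discretization of $K(\ul{b}_*)$), yields $\#D_{\ul{b}_*}/\#D \geq c_1 > 0$ uniformly, and the conclusion holds with $a := 1 - c_1 < 1$. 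The main obstacle is the uniformity $\inf_{\tb} W(\tb) > 0$; individually $W(\tb) > 0$ is immediate from the hypothesis, but the uniform version demands the compactness/Hausdorff-limit argument described above.
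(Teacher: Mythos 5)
Your proof is correct, but it is organized quite differently from the paper's. The paper argues by contradiction: it takes sequences $\rho_n\to 0$, $\alpha_n\to 0$, $a_n\to 1$, together with $\tb^n\to\tb$, $x_n\to x$, $L_n\to L$, observes that the cones collapse onto the line $x+L$, and uses the upper bound $\#D_n\le C_1\rho_n^{-d}$ to conclude that $H_d\bigl(K^{\tb}(\theta_0,s)\setminus(x+L)\bigr)=0$, forcing $K^{\tb}(\theta_0,s)\subset x+L$ and contradicting the not-essentially-real hypothesis. You instead prove a stronger, direct intermediate statement: a uniform lower bound $c_0$ on the ``width'' $W(\tb)$ of $K^{\tb}(\theta_0,s)$ transverse to any line, obtained by the same compactness of $\Sigma^-$ and continuity of $\tb\mapsto k^{\tb}$ that the paper uses for its limit, together with the affine identification $K^{\tb}(\theta_0,s)=F^{\tb}_{(\theta_0,s)}(K^{\tb(\theta_0,s)}(s))$ to reduce to the definition of essentially real. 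From this you extract a single level-$N_0$ cylinder entirely off the cone and finish with a Markov decomposition of $D$ plus the lower bound $\#D_{\ul{b}_*}\gtrsim\rho^{-d}$ for the sub-discretization (which, like the paper's $\#D_n\asymp\rho_n^{-d}$, rests on the positivity of the $d$-dimensional Hausdorff content of the pieces $K(\ul{b})$ — worth saying explicitly, since the displayed estimate $\#\Sigma(C_5,\rho)\asymp\rho^{-HD(K)}$ by itself only bounds discretizations from above). The trade-off: the paper's argument is shorter to write but purely qualitative; yours yields explicit admissible values $\alpha_0=\arcsin\bigl(c_0/(2M+c_0)\bigr)$ and $a=1-c_1$, and isolates the compactness input into the single clean statement $\inf_{\tb}W(\tb)\ge c_0>0$, which you correctly identify as the only non-routine step.
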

Another use of the not essentially real hypothesis will be given in the next lemma. Let $K$ be a Cantor set, for $x \in K$ consider the set
\[K^{dir}_x :=\bigcap_{\delta>0} \overline{\left\{\frac{y-x}{|y-x|}:y\in B_{\delta}(x)\cap (K\setminus \{x\})\right\}}.\]
If $K$ is not essentially real then the set $K^{dir}_x$ has two linearly independent vectors (over $\R$) and then the following lemma holds for $K$.

\begin{lem}\label{lem:HigherDerivative}
Let $K$ be a Cantor set and $f$ a $C^l$ function from a neighborhood of $K$ into $\R^2$. Suppose that $f$ is conformal at $K$, i.e. $Df(x)$ is conformal for all $x\in K$, and $K^{dir}_x$ has two linearly independent vectors (over $\R$), for all $x\in K$. Then, for all $x\in K$, the $l$-linear map $D^l f(x):\R^2\times ...\times \R^2 \to \R^2$ is conformal, i.e. there is a complex number $c^l_x$ such that
\[D^l f(x)(z_1,...,z_l)=c^l_x\cdot z_1 \cdot z_2 \cdot ... \cdot z_l.\]
The operation $\cdot$ in the right hand side of the last equality corresponds to complex multiplication.
\end{lem}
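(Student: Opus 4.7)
The plan is to induct on $l$, the case $l=1$ being the hypothesis that $Df$ is conformal on $K$. Suppose the statement holds for $1,\ldots,l-1$; in particular, for each $y\in K$, the inductive hypothesis says
\[D^{l-1}f(y)(w_1,\ldots,w_{l-1})=c^{l-1}_y\cdot w_1\cdots w_{l-1}\]
(complex product) for some $c^{l-1}_y\in\C$. The core idea is to linearize the map $y\mapsto D^{l-1}f(y)$ at $x$: by definition of $D^l f(x)$ as the derivative of $D^{l-1}f$ at $x$,
\[(c^{l-1}_y-c^{l-1}_x)\,w_1\cdots w_{l-1}=D^l f(x)(y-x,w_1,\ldots,w_{l-1})+o(|y-x|)\,|w_1|\cdots|w_{l-1}|\]
for $y\in K$.

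Now take $y\in K\setminus\{x\}$ with $y\to x$ and $(y-x)/|y-x|\to v\in K^{dir}_x$ (possible since $K$ is perfect). Dividing the above identity by $|y-x|$, the right-hand side converges to $D^l f(x)(v,w_1,\ldots,w_{l-1})$ by continuity and multilinearity, so the scalar quotient $(c^{l-1}_y-c^{l-1}_x)/|y-x|$ must converge to some $\beta(v)\in\C$, yielding
\[D^l f(x)(v,w_1,\ldots,w_{l-1})=\beta(v)\cdot w_1\cdots w_{l-1},\qquad v\in K^{dir}_x.\]
Picking two $\R$-independent vectors $v_1,v_2\in K^{dir}_x$ and extending $\beta$ to an $\R$-linear map $\tilde\beta:\C\to\C$ via $\tilde\beta(a_1v_1+a_2v_2)=a_1\beta(v_1)+a_2\beta(v_2)$, $\R$-linearity of $D^l f(x)$ in its first slot promotes the identity to
\[D^l f(x)(v,w_1,\ldots,w_{l-1})=\tilde\beta(v)\cdot w_1\cdots w_{l-1}\qquad\text{for every }v\in\C.\]

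To finish, use that $D^l f(x)$ is a symmetric multilinear form (Schwarz's theorem, since $f\in C^l$). Choose $w_1\in K^{dir}_x$ and swap $v\leftrightarrow w_1$ in the formula above; comparing the two expressions and canceling $w_2\cdots w_{l-1}$ (chosen nonzero) gives $\tilde\beta(v)\,w_1=\beta(w_1)\,v$. Hence $\tilde\beta(v)/v=\beta(w_1)/w_1$ is a constant $c^l_x\in\C$, so $\tilde\beta(v)=c^l_x\cdot v$, and substituting back gives exactly the conformal form $D^l f(x)(v,w_1,\ldots,w_{l-1})=c^l_x\cdot v\cdot w_1\cdots w_{l-1}$.

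The main subtlety I anticipate is the scarcity of directions: $K^{dir}_x$ is only assumed to contain two $\R$-independent vectors, whereas a homogeneous polynomial of degree $l-1\geq 2$ in $\R^2$ is not determined by its values at only two projective points. This rules out a naive polarization of the $(l-1)$-th Taylor term of $f$ itself. The remedy above is to work one derivative at a time via the induction, so that $K^{dir}_x$ is only used to probe the single first argument of $D^l f(x)$, where two independent vectors already span the ambient $\R^2$; symmetry of $D^l f$ then distributes the conformal structure to the remaining slots.
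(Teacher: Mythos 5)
Your argument is correct, and since the paper omits the proof of this lemma entirely (deferring to chapter 1 of \cite{Z}), there is nothing in the text to contrast it with; your route --- induction on the order of the derivative, probing only the first slot of $D^l f(x)$ with the two independent directions of $K^{dir}_x$, and then using symmetry of the $l$-th derivative to spread complex linearity to the remaining slots --- is the natural and standard one, and it correctly sidesteps the obstruction you identify (two projective directions do not determine a degree-$(l-1)$ homogeneous form). The only steps worth spelling out are the convergence of $(c^{l-1}_y-c^{l-1}_x)/|y-x|$, which follows by first taking $w_1=\cdots=w_{l-1}=1$ in your Taylor identity, and the observation that in the final swap the fixed slot need not lie in $K^{dir}_x$, since by then the formula already holds for an arbitrary first argument.
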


In particular, if a Cantor set is not essentially real and not essentially affine then for the values $z_0 \in K$, $\tb^1,\tb^2\in \Sigma^{-}$, given by definition \ref{def:NEssAff}, there is a non zero complex number $d_0$ such that
\[D^2(k^{\tb^1}\circ (k^{\tb^2})^{-1})(z_0)(v,w)=d_0\cdot v \cdot w.\]

\subsection{Renormalization Operator}

From now on we will be working with a finite set of conformal regular Cantor sets $K_1,...,K_n$. To each one of them we have various objects associated, as defined in the previous subsections. We will use subscripts and superscripts to differentiate the objects from one Cantor set to the other. For example we use $\Sigma_j(c,\rho)$ for the set $\Sigma(c,\rho)$, which was defined in the last subsection, associated to the Cantor set $K_j$. We will denote by $d_j$ the Hausdorff dimension of the Cantor set $K_j$. In this section we will define renormalization operators, which are operators associated to the family $K_1,...,K_n$ of Cantor sets. 

Consider the space
\begin{align*}
J=\R^{n-1}\times \T^n,
\end{align*}
where $\T=\R/(2\pi \Z)$. $J$ is an abelian locally compact group, we put on it the unique invariant metric such that the distance between $(t_1,...,t_{n-1},v_1,...,v_n)$ and the zero element is $\max_j\{|t_j|,\|v_j\|\}$.

For $(\ul{b}^1,...,\ul{b}^n)\in \Sigma^{fin}_1\times ... \times \Sigma^{fin}_n$ define the operator
\begin{align*}
T_{\ul{b}^1,...,\ul{b}^n}:\Sigma^-_1\times ...\times \Sigma^-_n \times J \to \Sigma^-_1\times ...\times \Sigma^-_n \times J,
\end{align*}
given by
\begin{align*}
T_{\ul{b}^1,...,\ul{b}^n}&(\tb^1,...,\tb^n,t_1,...,t_{n-1},v_1,...,v_n)\\
&=(\tb^1\ul{b}^1,...,\tb^n\ul{b}^n,t_1+\log \frac{r_{\ul{b}^1}^{\tb^1}}{r_{\ul{b}^n}^{\tb^n}},...,t_{n-1}+\log \frac{r_{\ul{b}^{n-1}}^{\tb^{n-1}}}{r_{\ul{b}^n}^{\tb^n}},v_1+v_{\ul{b}^1}^{\tb^1},...,v_n+v_{\ul{b}^n}^{\tb^n}).
\end{align*}
These are called renormalization operators. They will appear in the statement of the scale recurrence lemma. For $r>0$ we also define the set
\[J_r=\{(t_1,...,t_{n-1})\in\R^{n-1}: |t_j|\leq r,\,\, 1\leq j\leq n-1\}\times \T^n,\]
and denote by $\nu$ the Haar measure on $J$ giving measure $(2\pi)^n$ to the set $J_{1/2}$.

\subsection{Scale Recurrence Lemma}

In this subsection we state one of the principal results in the paper. This is a multidimensional conformal version of the scale recurrence lemma of Moreira, Yoccoz \cite{MY}. The proof is technical and will be left for the end of the paper.

\begin{scl}\label{lem:scl}
Let $K_1$, $K_2$,..., $K_{n}$ be $C^m$ conformal regular Cantor sets with $m\geq 2$. Suppose they are not essentially affine and not essentially real. Denote by $d_j$ the Hausdorff dimension of $K_j$, $1\leq j\leq n$. If $r,c_0$ are conveniently large, there exist $c_1,c_2,c_3,\rho_0>0$ with the following properties: given $0<\rho<\rho_0$, and a family $F(\ul{a}^1,...,\ul{a}^n)$ of subsets of $J_r$, $(\ul{a}^1,...,\ul{a}^n)\in \Sigma_1(c_0,\rho)\times ...\times \Sigma_n(c_0,\rho)$, such that
\[\nu(J_r\setminus F(\ul{a}^1,...,\ul{a}^n))\leq c_1, \forall (\ul{a}^1,...,\ul{a}^n),\] 
there is another family $F^*(\ul{a}^1,...,\ul{a}^n)$ of subsets of $J_r$ satisfying:
\begin{itemize}
\item[(i)] For any $(\ul{a}^1,...,\ul{a}^n)$, $F^*(\ul{a}^1,...,\ul{a}^n)$ is contained in the $c_2 \rho$-neighborhood of $F(\ul{a}^1,...,\ul{a}^n)$.
\item[(ii)] Let $(\ul{a}^1,...,\ul{a}^n)\in \Sigma_1(c_0,\rho)\times ...\times \Sigma_n(c_0,\rho)$, $(t,v)\in F^*(\ul{a}^1,...,\ul{a}^n)$; there exist at least $c_3\rho ^{-(d_1+d_2+...+d_n)}$ tuples $(\ul{b}^1,...,\ul{b}^n)\in \Sigma_1(c_0,\rho)\times ...\times \Sigma_n(c_0,\rho)$ (with $\ul{b}^1$,..., $\ul{b}^n$ starting with the last letter of $\ul{a}^1$,..., $\ul{a}^n$) such that, if $\tb^i \in \Sigma_i^{-}$ ends with $\ul{a}^i$, i=1,...,n, and
\[T_{\ul{b}^1,...,\ul{b}^n}(\tb^1,...,\tb^n,t,v)=(\tb^1\ul{b}^1,...,\tb^n\ul{b}^n,\tilde{t},\tilde{v})\]
the $\rho$-neighborhood of $(\tilde{t},\tilde{v})\in J$ is contained in $F^*(\ul{b}^1,...,\ul{b}^n)$.
\item[(iii)] $\nu(F^*(\ul{a}^1,...,\ul{a}^n))\geq \nu(J_r)/2$ for at least half of the $(\ul{a}^1,...,\ul{a}^n)\in \Sigma_1(c_0,\rho)\times ...\times \Sigma_n(c_0,\rho)$.
\end{itemize}
\end{scl}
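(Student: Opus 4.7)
The plan is to follow the strategy of Moreira--Yoccoz (\S3.2 of \cite{MY}), adapted to the complex, multi-Cantor setting. The key observation is that each renormalization operator $T_{\ul{b}^1,\ldots,\ul{b}^n}$ acts on the $J$--coordinate as a translation by
\begin{align*}
\tau_{\tb}(\ul{b}^1,\ldots,\ul{b}^n):=\Bigl(\log\tfrac{r^{\tb^1}_{\ul{b}^1}}{r^{\tb^n}_{\ul{b}^n}},\ldots,\log\tfrac{r^{\tb^{n-1}}_{\ul{b}^{n-1}}}{r^{\tb^n}_{\ul{b}^n}},v^{\tb^1}_{\ul{b}^1},\ldots,v^{\tb^n}_{\ul{b}^n}\Bigr)\in J,
\end{align*}
so the lemma becomes: as $(\ul{b}^1,\ldots,\ul{b}^n)$ ranges over the $\rho$-scale tuples with prescribed initial letters, show that the translates $\tau_{\tb}(\ul{b}^1,\ldots,\ul{b}^n)$ are sufficiently well distributed in $J_r$ to hit a large preassigned family $\{F(\ul{b}^1,\ldots,\ul{b}^n)\}$ many times. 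I would build $F^*$ as the fixed point of a monotone child-count operator: a point $(t,v)$ is put into $F^*(\ul{a}^1,\ldots,\ul{a}^n)$ when at least $c_3\rho^{-(d_1+\cdots+d_n)}$ children satisfy $B_\rho\bigl(t+\tau_{\tb}(\ul{b}^1,\ldots,\ul{b}^n)\bigr)\subset F^*(\ul{b}^1,\ldots,\ul{b}^n)$, and $F^*(\ul{a}^1,\ldots,\ul{a}^n)$ is taken as the largest such subset of the $c_2\rho$-neighborhood of $F(\ul{a}^1,\ldots,\ul{a}^n)$. Property (i) is built in and (ii) is the fixed-point equation; the content of (iii) is that this fixed point still captures most of $F$.

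The technical heart is an anti-concentration estimate: there is a constant $C$ (depending only on $c_0$, $r$, and the Cantor sets) such that for every $\tb$ and every ball $B\subset J$ of radius $\rho$,
\[\#\bigl\{(\ul{b}^1,\ldots,\ul{b}^n):\tau_{\tb}(\ul{b}^1,\ldots,\ul{b}^n)\in B\bigr\}\le C\cdot\nu(B)\cdot\rho^{-(d_1+\cdots+d_n)}.\]
The factorization of both the index set and of $J$ reduces this to the single-Cantor version, where the question is whether the pairs $(\log r^{\tb^j}_{\ul{b}^j},v^{\tb^j}_{\ul{b}^j})$ can accumulate in a disc of radius $\rho$ in $\R\times\T$. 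An essentially affine $K_j$ would defeat the estimate because these pairs would align along an arithmetic progression. The non-essentially-affine hypothesis, via the second-order term of $k^{\tb^1}\circ(k^{\tb^2})^{-1}$ at the point $z_0$ of Definition \ref{def:NEssAff}, produces a non-zero complex-quadratic correction $z\mapsto d_0 z^2$ (Lemma \ref{lem:HigherDerivative}); this is the source of a $\rho$-scale spread of the pairs among children sharing a long prefix. The not-essentially-real hypothesis enters through Lemma \ref{lem:cono}, which rules out the degenerate scenario in which the child directions cluster around a single line and thereby collapse the spread in the toral coordinate. Once the anti-concentration is in hand, a standard averaging argument shows that each step of the fixed-point iteration loses only a small, geometrically controlled amount of measure, yielding (iii) for sufficiently small $c_1$.

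The main obstacle is the anti-concentration estimate and its uniformity in $\tb$ and in the base tuple. Following the Moreira--Yoccoz template, one decomposes each word $\ul{b}^j$ as a long prefix followed by a short ``perturbation'' suffix drawn from a fixed finite family; the composition law $F_{\ul{b}}^{\tb}=F_{\ul{b}'}^{\tb}\circ F_{\ul{b}''}^{\tb\ul{b}'}$ for the affine limit-geometry maps then expresses the translation differences between words with the same prefix as a series in the diameter of the prefix. The first-order term is identical across prefixes and so contributes nothing to anti-concentration; the genuine spread comes from the second-order term, which is precisely the quantity controlled by $d_0$ above. Quantifying this uniformly in $\tb$, combining it with Lemma \ref{lem:cono} to exclude tangential alignments, and finally transferring the per-prefix bound into a count in $J$, constitutes the bulk of Section 4; the product structure over the $n$ Cantor sets requires separate care, since a priori the $n$ marginal spreads could conspire to lie along the diagonal subgroup of $J$ and kill the joint spread.
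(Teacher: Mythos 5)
Your overall architecture matches the paper's: $F^*$ is indeed obtained as the (largest) fixed point of a monotone child-count operation -- the paper builds it as $J_r\setminus E_\infty(\ld)$ where the $E_m$ are an increasing sequence of ``bad'' sets -- and the non-essentially-affine hypothesis does enter through the nonzero conformal second derivative $d_0$ of $k^{\tb^1}\circ(k^{\tb^2})^{-1}$, the not-essentially-real hypothesis through Lemma \ref{lem:cono}, and your worry about the marginal spreads conspiring along a subgroup of $J$ is a real issue that the paper resolves by isolating a single coordinate $j_0$ of the frequency and fixing all other entries of the tuple.

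The genuine gap is your ``technical heart.'' The scale-$\rho$ anti-concentration estimate
$\#\{(\ul{b}^1,\ldots,\ul{b}^n):\tau_{\tb}(\ul{b}^1,\ldots,\ul{b}^n)\in B\}\le C\,\nu(B)\,\rho^{-(d_1+\cdots+d_n)}$
is both too strong to follow from the hypotheses and not the estimate the argument needs. Since $\nu(B)\asymp\rho^{2n-1}$ for a $\rho$-ball, in the regime $d_1+\cdots+d_n<2n-1$ (which the lemma does not exclude) your bound asserts that essentially no two translation vectors lie in a common $\rho$-ball, i.e.\ optimal-constant equidistribution of the $\rho^{-(d_1+\cdots+d_n)}$ points $\tau_{\tb}(\cdot)$ at scale exactly $\rho$. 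The non-essentially-affine hypothesis is purely qualitative (one point $z_0$, one pair of limit geometries, one nonzero second derivative) and cannot yield such a uniform separation: a Cantor set may be non-essentially affine yet have $D^2(k^{\tb^1}\circ(k^{\tb^2})^{-1})$ arbitrarily small, in which case the pairs $(\log r^{\tb}_{\ul{b}},v^{\tb}_{\ul{b}})$ cluster heavily at scale $\rho$. What the paper actually proves (Section 4.3) is the Fourier-side statement $\|T_\xi\|\le k_0<1$ for frequencies $1\le|\xi|\le\Delta_0\rho^{-1}$ with $\Delta_0$ chosen \emph{small}, i.e.\ absence of almost-periodic structure only at spatial scales between $\rho/\Delta_0$ and $1$; the contradiction argument there shows that $\|T_\xi\|\approx1$ would force the phases $\langle\xi,a_{\ld_1}^{\ld'}-a_{\ld_0}^{\ld'}\rangle$ into an arithmetic progression, which the second derivative plus the cone lemma rule out. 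The passage from this spectral gap to property (iii) is then an $L^2$/Plancherel contraction (Proposition \ref{fourier} and Corollary \ref{cor:general}), not the pointwise averaging you describe; and it additionally requires a separate treatment of the boundary strips of $J_r$ (the $J_{A,B}$ decomposition), without which the iteration leaks measure near $\partial J_r$ at every step. As written, your key estimate would fail and the averaging step that relies on it does not go through.
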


\section{Dimension Formula}

In this section we will prove the dimension formula (theorem B), which is one of the main theorems on the paper. First, we will introduce some notation. Secondly, we will present the discrete Marstrand property, which will be an important tool. Finally, we give the proof of theorem B.

We will use the notation
\[R(\rho)= \Sigma_1(c_0,\rho)\times ...\times \Sigma_n(c_0,\rho),\]
and think of any element $Q=(\ul{a}^1,...,\ul{a}^n)\in R(\rho)$ as the set $G(\ul{a}^1)\times...\times G(\ul{a}^n)$. Given a function $\varphi$ defined on a neighborhood of $G(\ul{a}^1)\times...\times G(\ul{a}^n)$ we write $\varphi(Q)$ to denote the set $\varphi(G(\ul{a}^1)\times...\times G(\ul{a}^n))$.\\

To each $(t,v)\in J$ we associate the linear map $A(t,v):\C^n\to \C^n$ given by
\[A(t_1,...,t_{n-1},v_1,...,v_n)(z_1,...,z_n)=(e^{t_1+iv_1}\cdot z_1,...,e^{t_{n-1}+iv_{n-1}}\cdot z_{n-1},e^{iv_n}z_n).\]
We also consider the composition of these maps with limit geometries
\[\pi_{\tb^1,...,\tb^n,t,v}:= A(t,v)\circ (k^{\tb^1},...,k^{\tb^n}).\]
These maps are related to the renormalization operators by the following equation
\begin{equation}\label{eq:ren}
    \pi_{\tb^1,...,\tb^n,t,v}\circ (f_{\ul{a}^1},...,f_{\ul{a}^n})= B \circ \pi_{T_{\ul{a}^1,...,\ul{a}^n}(\tb^1,...,\tb^n,t,v)},
\end{equation}
where $B:\C^n\to \C^n$ is an affine function of the form $B(z)=\alpha\cdot z + \beta$ for $\alpha \in \R$. In fact, this equation is the reason why we defined the renormalization operators as we did.

One of the main reasons to use limit geometries is that they appear naturally when one consider compositions of a $C^1$ function with the maps $f_{\ul{a}}$. This is explained in the next lemma.
% There is also the linear map \[A^1(t_1,...,t_{n-1},v_1,...,v_{n-1})(z_1,...,z_n)=(e^{t_1+iv_1}\cdot z_1,...,e^{t_{n-1}+iv_{n-1}}\cdot z_{n-1},z_n),\]
\begin{lem}\label{ren:atr}
Let $h$ be a $C^1$ function defined from a neighborhood of $K_1\times ...\times K_n$ into $\R^l$, and $r>0$. There exists a function $E:(0,\infty)\to \R$, depending only on $h$, $r$ and the Cantor sets, such that $\lim_{t\to 0}E(t)=0$ and with the following property:

For any $(\ul{a}^1,...,\ul{a}^n)$  such that
\[s=(\log \frac{r_{\ul{a}^1}}{r_{\ul{a}^n}},...,\log \frac{r_{\ul{a}^{n-1}}}{r_{\ul{a}^n}},v_{\ul{a}^1},...,v_{\ul{a}^n})\in J_r,\]
consider the affine function $L:\R^{l}\to \R^{l}$ given by
\[L(z)=\frac{1}{r_{\ul{a}^n}}(z-h(c_{\ul{a}^1},...,c_{\ul{a}^n})).\]
Then for any $\tb^1,...,\tb^n$ ending in $\ul{a}^1,...,\ul{a}^n$ the supremum distance between $L\circ h \circ (f_{\ul{a}^1},...,f_{\ul{a}^n})$ and
\[Dh(c_{\ul{a}^1},...,c_{\ul{a}^n})\circ A(s)\circ (k^{\tb^1},...,k^{\tb^n})\]
is less than $E(\max_{1\leq j\leq n} r_{\ul{a}^j})$.
\end{lem}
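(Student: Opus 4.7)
The plan is to decompose the error into a $C^1$ Taylor approximation error for $h$ around the basepoint $c:=(c_{\ul{a}^1},\dots,c_{\ul{a}^n})$ and an error from comparing the rescaled inverse branches to the limit geometry maps. First, since $h$ is $C^1$ on a compact neighborhood $U$ of $K_1\times\cdots\times K_n$, the uniform continuity of $Dh|_U$ yields a modulus $\eta_1(\delta)\to 0$ with $|h(y)-h(c')-Dh(c')(y-c')|\leq \eta_1(|y-c'|)\,|y-c'|$ for $c',y\in U$. Applying this with $c'=c$ and $y=(f_{\ul{a}^1}(x_1),\dots,f_{\ul{a}^n}(x_n))$, one has $|y-c|\leq \sqrt{n}\max_j r_{\ul{a}^j}$; dividing by $r_{\ul{a}^n}$ and using that $s\in J_r$ forces $r_{\ul{a}^j}/r_{\ul{a}^n}\leq e^r$, this bounds the sup distance between $L\circ h\circ(f_{\ul{a}^1},\dots,f_{\ul{a}^n})$ and the linearized map $\tfrac{1}{r_{\ul{a}^n}}Dh(c)\bigl((f_{\ul{a}^1},\dots,f_{\ul{a}^n})-c\bigr)$ by $\sqrt{n}\,e^r\,\eta_1(\sqrt{n}\max_j r_{\ul{a}^j})$.

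Next, a direct algebraic calculation using $t_j=\log(r_{\ul{a}^j}/r_{\ul{a}^n})$ and $v_j=v_{\ul{a}^j}$ shows that the $j$-th coordinate of the difference between the linearized map (before $Dh(c)$) and $A(s)\circ(k^{\tb^1},\dots,k^{\tb^n})$ equals $(r_{\ul{a}^j}/r_{\ul{a}^n})\,e^{iv_{\ul{a}^j}}\bigl(\psi_j(x_j)-k^{\tb^j}(x_j)\bigr)$, where
\[\psi_j(x):=\frac{f_{\ul{a}^j}(x)-c_{\ul{a}^j}}{r_{\ul{a}^j}\,e^{iv_{\ul{a}^j}}}.\]
Using boundedness of $\|Dh\|$ on $U$ together with $r_{\ul{a}^j}/r_{\ul{a}^n}\leq e^r$, the remaining task reduces to estimating $\|\psi_j-k^{\tb^j}\|_{\sup}$.

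The crucial observation is that $\psi_j$ coincides exactly with the $n_j$-th limit-geometry approximant of $k^{\tb^j}$. Writing $\ul{a}^j=(a^j_0,\dots,a^j_{n_j})$, the hypothesis that $\tb^j$ ends in $\ul{a}^j$ means its last $n_j+1$ letters form $\ul{a}^j$, so by the definition of the approximants $k^{\tb^j}_{n_j}=\phi_{\ul{a}^j}\circ f_{\ul{a}^j}$. Solving the three normalizing conditions defining $\phi_{\ul{a}^j}$ yields the explicit formula $\phi_{\ul{a}^j}(z)=(z-c_{\ul{a}^j})/(r_{\ul{a}^j}e^{iv_{\ul{a}^j}})$, hence $k^{\tb^j}_{n_j}=\psi_j$. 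The limit-geometry theorem then supplies uniform constants $C>0$ and $\lambda_0\in(0,1)$ with $\|\psi_j-k^{\tb^j}\|_{\sup}\leq C\lambda_0^{n_j}$, and standard uniform-expansion estimates for $g_j$ give a lower bound of the form $n_j\geq \beta\log(1/r_{\ul{a}^j})-\beta'$ with $\beta,\beta'>0$, converting this into $\|\psi_j-k^{\tb^j}\|_{\sup}\leq C' r_{\ul{a}^j}^{\alpha}$ for some $\alpha>0$, with all constants uniform in $j,\tb^j,\ul{a}^j$.

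Collecting the two error sources produces a function $E(t)=C_1\eta_1(\sqrt{n}\,t)+C_2 t^{\alpha}$ that vanishes as $t\to 0$. There is no serious conceptual obstacle; the delicate point is recognizing the identification $\psi_j=k^{\tb^j}_{n_j}$, which is what allows the uniform convergence rate from the limit-geometry theorem to enter the final estimate. Beyond that one only needs to carefully track the interaction between the individual scales $r_{\ul{a}^j}$ and the reference scale $r_{\ul{a}^n}$, which is controlled throughout by the hypothesis $s\in J_r$.
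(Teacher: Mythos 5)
Your proposal is correct and follows essentially the same route as the paper's proof: a first-order Taylor expansion of $h$ at $(c_{\ul{a}^1},\dots,c_{\ul{a}^n})$ controlled by the modulus of continuity of $Dh$, combined with the identification $\phi_{\ul{a}^j}\circ f_{\ul{a}^j}=k^{\tb^j}_{n_j}$ and the uniform exponential convergence of the limit-geometry approximants, with the factors $r_{\ul{a}^j}/r_{\ul{a}^n}\le e^r$ handled via $s\in J_r$. The explicit conversion of the exponential rate into a bound of the form $C'r_{\ul{a}^j}^{\alpha}$ is a slightly more detailed bookkeeping of the same estimate the paper invokes.
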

This lemma is saying that $h\circ (f_{\ul{a}^1},...,f_{\ul{a}^n})$, modulo composition by an affine function on the left, becomes arbitrarily close to a function of the form $B\circ A(t,v)\circ (k^{\tb^1},...,k^{\tb^n})$.
\begin{proof}
Write $h\circ (f_{\ul{a}^1},...,f_{\ul{a}^n})$ as
\[[h\circ (\phi_{\ul{a}^1}^{-1},...,\phi_{\ul{a}^n}^{-1})] \circ (\phi_{\ul{a}^1}\circ f_{\ul{a}^1},...,\phi_{\ul{a}^n}\circ f_{\ul{a}^n}).\]
Use the fact that $(\phi_{\ul{a}^1}\circ f_{\ul{a}^1},...,\phi_{\ul{a}^n}\circ f_{\ul{a}^n})$ becomes close to a limit geometry $(k^{\tb^1},...,k^{\tb^n})$ and Taylor first order approximation for $h\circ (\phi_{\ul{a}^1}^{-1},...,\phi_{\ul{a}^n}^{-1})$.
\end{proof}

\subsection{Discrete Marstrand Property}\label{Discrete Marstrand Theorem}

In this section we present and prove the discrete Marstrand property. We first state two linear algebra results that we will need.

\begin{lem}
Let $A:\R^n\to \R^d$ be a linear map, $A\neq 0$, and denote by $\sigma$ the smallest non-zero singular value of $A$. Then
\[dist(x,Ker(A))\leq \frac{|Ax|}{\sigma},\]
for all $x\in \R^n$.
\end{lem}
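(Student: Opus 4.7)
The plan is to use the singular value decomposition, or equivalently the orthogonal decomposition $\R^n = \mathrm{Ker}(A) \oplus \mathrm{Ker}(A)^{\perp}$. The key observation is that on $\mathrm{Ker}(A)^{\perp}$ the restriction of $A$ is injective, and its smallest singular value is precisely $\sigma$, the smallest non-zero singular value of $A$.

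First I would fix $x \in \R^n$ and write $x = x_1 + x_2$ with $x_1 \in \mathrm{Ker}(A)$ and $x_2 \in \mathrm{Ker}(A)^{\perp}$. By orthogonality, $\mathrm{dist}(x, \mathrm{Ker}(A)) = |x_2|$. Since $Ax_1 = 0$, we have $Ax = Ax_2$, so it suffices to bound $|x_2|$ in terms of $|Ax_2|$.

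Next I would invoke the spectral theorem for $A^*A$: the non-zero eigenvalues of $A^*A$ are the squares of the non-zero singular values of $A$, and the corresponding eigenvectors span $\mathrm{Ker}(A)^{\perp}$ (while the zero eigenspace is $\mathrm{Ker}(A)$). For any $y \in \mathrm{Ker}(A)^{\perp}$, the Rayleigh quotient satisfies $\langle A^*A y, y \rangle \geq \sigma^2 |y|^2$, since $\sigma^2$ is the smallest eigenvalue of $A^*A$ restricted to $\mathrm{Ker}(A)^{\perp}$. Applied to $y = x_2$ this gives $|Ax_2|^2 \geq \sigma^2 |x_2|^2$, hence $|x_2| \leq |Ax_2|/\sigma = |Ax|/\sigma$. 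Combining with the first step gives the claim.

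There is no serious obstacle here; the only thing to be careful about is making clear that the smallest non-zero singular value $\sigma$ coincides with the smallest singular value of the injective restriction $A|_{\mathrm{Ker}(A)^{\perp}} : \mathrm{Ker}(A)^{\perp} \to \mathrm{Im}(A)$, which follows immediately from the SVD. The hypothesis $A \neq 0$ just ensures that such a $\sigma > 0$ exists, so that the right-hand side of the inequality is well-defined.
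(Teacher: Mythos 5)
Your proof is correct and follows essentially the same route as the paper's: both decompose $x=x_1+x_2$ with $x_1\in Ker(A)$, $x_2\in Ker(A)^{\perp}$, identify $dist(x,Ker(A))=|x_2|$, and bound $|Ax_2|\geq \sigma |x_2|$ using the singular structure of $A$ on $Ker(A)^{\perp}$. The only difference is cosmetic: you phrase the last step via the Rayleigh quotient of $A^*A$, while the paper writes out the decomposition $A=UDV$ and argues coordinate-wise.
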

\iffalse
\begin{proof}
Let $A=UDV$ be the singular representation of $A$. Then $U\in O(d),\, V\in O(n)$ and $D$ is a rectangular diagonal matrix with $\sigma_1\leq \sigma_2\leq...\leq \sigma_r$, the non-zero singular values, in the diagonal. Let $x_1\in Ker(A)$, $x_2\perp Ker(A)$ such that $x=x_1+x_2$. Then
\[dist(x,Ker(A))=|x_2|=|Vx_2|.\]
Therefore
\[|Ax|=|Ax_2|=|UDVx_2|=|DVx_2|\geq \sigma_1 |Vx_2|=\sigma_1 \cdot dist(x,Ker(A)),\]
as we wanted to prove. To see why $|DVx_2|\geq \sigma_1 |Vx_2|$, notice that $x_2\perp Ker(A)$ implies that $Vx_2$ has all its coordinates equal to zero except the ones associated to the non-zero singular values. This implies that $Vx_2=(a_1,...,a_r,0,...,0)$ and $DVx_2=(\sigma_1 a_1,...,\sigma_r a_r,0,...,0)$, the inequality follows immediately.
\end{proof}
\fi
\begin{lem}
Let $E_1,E_2 \subset \R^n$ be linear subspaces such that $E_1+E_2=\R^n$. Denote by $\theta$ the angle between $E_1$ and $E_2$. Define
\[I=E_1\cap E_2,\,L_1=I^{\perp}\cap E_1,\,L_2=I^{\perp}\cap E_2.\]
Let $x=l_1+v+l_2$, with $l_1\in L_1$, $l_2\in L_2$, $v\in I$, then
\begin{align*}
    |l_1|&\leq \frac{dist(x,E_2)}{\sin  \theta},\\
    |l_2|&\leq \frac{dist(x,E_1)}{\sin  \theta}.
\end{align*}
\end{lem}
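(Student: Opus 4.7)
The plan is to reduce both inequalities to the previous lemma (distance to the kernel of a linear map) applied to an appropriate orthogonal projection. By symmetry it suffices to prove the bound on $|l_1|$; the bound on $|l_2|$ follows by swapping the roles of $E_1$ and $E_2$.

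First I would verify that the decomposition $x = v + l_1 + l_2$ is well-defined. On the one hand,
\[ L_1 \cap L_2 = (E_1 \cap E_2) \cap I^\perp = I \cap I^\perp = \{0\}. \]
On the other hand, any $w \in I^\perp$ can be written as $e_1 + e_2$ with $e_i \in E_i$ (using $E_1 + E_2 = \R^n$); decomposing each $e_i$ into its $I$-component and its $L_i$-component, the $I$-components must cancel because $w \in I^\perp$, hence $w = l_1 + l_2 \in L_1 + L_2$. Thus $\R^n = I \oplus L_1 \oplus L_2$ with $L_1, L_2 \perp I$, so the decomposition is unique.

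Next, let $A : E_1 \to E_2^\perp$ denote the restriction to $E_1$ of the orthogonal projection onto $E_2^\perp$. Its kernel is $E_1 \cap E_2 = I$, and the natural geometric meaning of $\theta$ (the angle between $E_1$ and $E_2$) is that $\sin \theta$ equals the smallest nonzero singular value of $A$; equivalently,
\[ \sin \theta \;=\; \inf_{e \in E_1 \setminus I} \frac{dist(e, E_2)}{dist(e, I)}. \]
Applying the previous lemma to the vector $l_1 + v \in E_1$, we have $dist(l_1 + v, I) = |l_1|$ (since $L_1 \perp I$) and $A(l_1 + v) = A(l_1)$ (since $v \in E_2$), whose norm is $dist(l_1, E_2)$. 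Hence
\[ |l_1| \;\leq\; \frac{dist(l_1, E_2)}{\sin \theta}. \]
Finally, because $v + l_2 \in E_2$ and $x - (v + l_2) = l_1$, we have $dist(x, E_2) = dist(l_1, E_2)$, which yields the desired bound.

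There is essentially no obstacle beyond bookkeeping: the statement is pure finite-dimensional linear algebra. The only point meriting explicit attention is fixing the convention for $\theta$ so that $\sin \theta$ is indeed the smallest nonzero singular value of the projection above (equivalently, the sine of the smallest principal angle between $L_1$ and $L_2$); with this convention the argument is a direct application of the previous lemma.
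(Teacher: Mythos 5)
Your proof is correct, but it takes a different route from the one in the paper. The paper argues directly and elementarily: it picks $\Delta\in E_2$ with $l_1-\Delta$ orthogonal to $E_2$, observes that the decomposition $x=(l_1-\Delta)+(v+l_2+\Delta)$ exhibits $dist(x,E_2)=|l_1-\Delta|$, and then reads off $|l_1-\Delta|=|l_1|\sin\measuredangle(l_1,\Delta)\geq |l_1|\sin\theta$ from the right triangle with vertices $0$, $\Delta$, $l_1$ --- it never invokes the preceding lemma. You instead route everything through that lemma, applied to the orthogonal projection $A:E_1\to E_2^{\perp}$ with $Ker(A)=I$, after identifying $\sin\theta$ with the smallest nonzero singular value of $A$; your intermediate steps $dist(l_1+v,I)=|l_1|$, $A(l_1+v)=A(l_1)$, and $dist(x,E_2)=dist(l_1,E_2)$ are all correct. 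What your approach buys is uniformity (both linear-algebra lemmas become singular-value estimates) and it makes explicit the convention on $\theta$ that the paper leaves implicit; what the paper's approach buys is brevity and independence from the first lemma. The only caveat in your write-up is the degenerate case $L_1=\{0\}$ (equivalently $E_2=\R^n$, so $A=0$ and the first lemma does not apply as stated), but there $l_1=0$ and the inequality is trivial.
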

\iffalse
\begin{proof}
There exists $\Delta \in E_2$ such that $l_1-\Delta$ is orthogonal to $E_2$, then $x=(l_1-\Delta)+(v+l_2+\Delta)$ and
\[dist(x,E_2)=|l_1-\Delta|.\]
Since $l_1-\Delta$ is orthogonal to $\Delta$ we obtain
\[|l_1-\Delta|=|l_1|\sin \measuredangle (l_1,\Delta),\]
and from the definition of $\theta$ we have $\theta\leq \measuredangle (l_1,\Delta)$. Therefore
\[dist(x,E_2)=|l_1-\Delta|=|l_1|\sin \measuredangle (l_1,\Delta)\geq |l_1| \sin \theta.\]
\end{proof}
\fi
The next proposition is the main tool that will allow us to obtain the discrete Marstrand property. Given an $\R$-linear map $B: \C^n\to \R^l$ we will say it satisfies the transversality condition if for any set $A\subset \{1,...,n\}$ we have
\[dim(Im(B|_{\{z_j=0:j\notin A\}}))=\min \{l,2\cdot \# A\},\]
in all this subsection $B$ will denote one such map.

\begin{prop}\label{prop:transversality}
Let $r>0$ and $B:\C^n\to \R^l$ a linear map satisfying the transversality condition. There exists a constant $C$, depending only in $r$ and $B$, such that for any pair of subsets $Q_1,Q_2 \subset \C^n$ we have
\[\nu(\{s\in J_r: B\circ A(s)(Q_1) \cap  B\circ A(s)(Q_2)\neq \emptyset\})\leq C \left(\frac{\max \{diam(Q_1),diam(Q_2)\}}{dist(Q_1,Q_2)}\right)^l.\]
\end{prop}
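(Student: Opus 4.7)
The first step is to reduce the proposition to a one-vector linear estimate. Note that $B\circ A(s)(Q_1)\cap B\circ A(s)(Q_2)\neq\emptyset$ is equivalent to the existence of $q_i\in Q_i$ with $BA(s)(q_1-q_2)=0$. Fix $d_0=q_1^0-q_2^0$ for arbitrary $q_i^0\in Q_i$. Since every $d\in Q_1-Q_2$ is within $diam(Q_1)+diam(Q_2)$ of $d_0$ and $\|BA(s)\|\leq\|B\|e^r$ on $J_r$, any bad $s$ satisfies
\[|BA(s)d_0|\leq 2\|B\|e^r\max\{diam(Q_1),diam(Q_2)\}.\]
Writing $u=d_0/|d_0|$ and using $|d_0|\geq dist(Q_1,Q_2)$, the proposition reduces to the uniform \emph{key estimate}
\[\nu\bigl(\{s\in J_r:|BA(s)u|\leq\eta\}\bigr)\leq C_0\,\eta^l,\qquad |u|=1,\ \eta>0,\]
with $C_0=C_0(B,r)$; the proposition then follows with $C=C_0(2\|B\|e^r)^l$.

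For the key estimate I would analyze $F_u(s):=BA(s)u$ via its derivatives. Writing $P_j:=B|_{\mathbb{C}e_j}$, one computes $\partial_{t_j}F_u=e^{t_j}P_j(e^{iv_j}u_j)$, $\partial_{v_j}F_u=e^{t_j}P_j(ie^{iv_j}u_j)$ for $j\leq n-1$, and $\partial_{v_n}F_u=P_n(ie^{iv_n}u_n)$. The transversality hypothesis gives $\dim\sum_{j\in A}Im(P_j)=\min\{l,2\#A\}$ for every $A\subset\{1,\ldots,n\}$, which is the key structural input. Fix a small constant $c=c(B,r,n)>0$ to be chosen at the end, and for each unit vector $u$ define the set of \emph{dominant indices}
\[S_u:=\{j:|u_j|\geq c\cdot\max_i|u_i|\},\]
which is always non-empty and contained in $\{1,\ldots,n\}$.

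I would then split into two cases. \emph{Submersion case} ($2\#S_u>l$): the restriction $B|_{\mathbb{C}^{S_u}}$ has rank $l$. Changing variables to $\tilde W_j=e^{t_j+iv_j}u_j/|u_j|$ (for $j\in S_u\cap\{1,\ldots,n-1\}$) and $\tilde W_n=e^{iv_n}u_n/|u_n|$ (for $n\in S_u$) reduces the problem, after absorbing the non-dominant and non-$A_0$ integrations into a bounded factor, to a slab estimate for the linear map $\tilde B=B|_{\mathbb{C}^{S_u}}\circ\mathrm{diag}(|u_j|)_{j\in S_u}$ on a fixed product of annuli and at most one circle. Since $\min_{j\in S_u}|u_j|\geq c\cdot\max_i|u_i|\geq c/\sqrt n$, the smallest singular value satisfies $\sigma_l(\tilde B)\geq m(B|_{\mathbb{C}^{S_u}})\,c/\sqrt n$, uniformly bounded below, and the resulting slab/coarea estimate gives $\lesssim\eta^l$. \emph{Trapped case} ($2\#S_u\leq l$): here $B|_{\mathbb{C}^{S_u}}$ is injective. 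Combining with $|A(s)u^{S_u}|\geq e^{-r}|u^{S_u}|\geq e^{-r}/\sqrt n$ gives $|B(A(s)u^{S_u})|\geq m(B|_{\mathbb{C}^{S_u}})e^{-r}/\sqrt n$, while the perturbation from the non-dominant indices is at most $\|B\|e^r|u^{S_u^c}|\leq\|B\|e^r c\sqrt n$; choosing $c$ small enough (below $m(B|_{\mathbb{C}^S})e^{-2r}/(n\|B\|)$ for every $S$ with $2\#S\leq l$) yields $|F_u|\geq c'>0$ uniformly, so $\{|F_u|\leq\eta\}$ is empty for $\eta<c'$ and the bound is trivial for $\eta\geq c'$.

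The main obstacle is reconciling these two regimes with uniform constants in $u$: a naive submersion estimate without the dominant-index truncation produces factors like $|u_{j_0}|^{-l}$ that diverge as some $|u_{j_0}|$ approaches zero, while a naive lower bound fails when the small indices carry too much mass. The dominant-index partition is designed precisely to resolve this tension, so that in the submersion case the restricted linear map has non-degenerate singular values, and in the trapped case the dominant contribution overwhelms the perturbation. Since there are only $2^n$ possible dominant sets $S_u$, taking the worst-case constant over all of them yields the desired $C_0$ depending only on $B$ and $r$.
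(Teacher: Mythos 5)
Your reduction to the one-vector estimate $\nu(\{s\in J_r:|BA(s)u|\le\eta\})\le C_0\eta^l$ and your dominant-index dichotomy are sound and in fact mirror the paper's proof: the paper also passes to a unit vector $x$, defines the set $A=\{j:e^{\chi_j}\ge ae^{-r}\}$ of large coordinates, rules out $\dim_\R\C^A<l$ exactly by your ``trapped'' argument, and then treats the case $\dim_\R\C^A\ge l$. The gap is in your submersion case, which is where all the real work lies. You assert that after fixing the non-dominant coordinates the problem becomes ``a slab estimate for $\tilde B$ on a fixed product of annuli and at most one circle'' yielding $\lesssim\eta^l$. That black box is false in general: the set you must measure is $\{W:|\tilde BW+E|\le\eta\}$ intersected with $\prod(\text{annuli})\times S^1$, i.e.\ a $w$-neighborhood of an \emph{affine} subspace, and when that affine subspace is tangent to the circle factor $\{|W_n|=1\}$ the arclength of the intersection scales like $\sqrt{\eta}$, not $\eta$ (e.g.\ $\{v:|\cos v-1|\le\eta\}$ has measure $\asymp\sqrt\eta$). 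This degeneracy is not excluded by taking $c$ small: the offset of the affine subspace from the origin is bounded by $|E|/\sigma_l(\tilde B)$, and your upper bound for $|E|$ and lower bound for $\sigma_l(\tilde B)$ are both proportional to $c$, so the $c$'s cancel and the offset can be of order $1$. Ruling out (or averaging away) the tangency requires a further use of the transversality hypothesis across the dominant/non-dominant split, which your proposal does not supply. (A smaller slip: in the submersion case $B|_{\C^{S_u}}$ is not injective, so $m(B|_{\C^{S_u}})=0$; you mean the smallest \emph{non-zero} singular value.)

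The paper circumvents this entirely by never fixing the non-dominant coordinates and never slicing by the circle. It shows that the full vector $[s]=(e^{t_1+iv_1},\dots,e^{iv_n})$ lies within $O(\eta)$ of the \emph{linear} subspace $\hat x^{-1}(Ker(B))$ (through the origin, of dimension $2n-l$), using two linear-algebra lemmas to control the component transverse to the kernel. It then parametrizes $H=\hat x^{-1}(Ker(B))\cap\{|z_n|=1,\ |z_j|\in[e^{-2r},e^{2r}]\}$ as a Lipschitz image of a piece of the $(2n-l-1)$-dimensional unit sphere of that subspace, covers $H$ by $O(\eta^{-(2n-l-1)})$ balls of radius $\eta$, and observes that the preimage in $s$-coordinates of each ball has $\nu$-measure $O(\eta^{2n-1})$; multiplying gives $O(\eta^l)$. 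Because this is a covering-number argument for $H$ rather than a volume estimate for a slab sliced by the circle, possible tangency between the subspace and the cylinder $\{|z_n|=1\}$ is harmless. To complete your proof you would need either to import this covering argument or to add an explicit treatment of the tangential regime (e.g.\ integrating the arc-length bound over the non-dominant coordinates, which again requires transversality to show the offset $E$ varies non-degenerately).
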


\begin{proof}
Through out the proof we will use the notation $P=O(Q)$, meaning that there is a constant $\tilde{C}$, depending only in $r$ and $B$, such that $P\leq \tilde{C} \cdot Q$.

Given a subset $A\subset \{1,2,...,n\}$ we consider the subspace
\[\C^A=\{(z_1,...,z_n)\in \C^n: z_j=0,\,\forall j\notin A\}.\]
By the transversality condition we can choose $\theta>0$, only depending on $B$, such that the angle between $Ker(B)$ and $\C^A$ is bigger than $\theta$ for any nonempty subset $A$.

Denote $\max\{diam(Q_1),diam(Q_2)\}$ by $\rho$. Fix $c_1,c_2 \in \C^n$ such that
\[dist(c_j,Q_j)< \rho,\,\, dist(Q_1,Q_2)/2\leq |c_2-c_1|\leq 2 dist(Q_1,Q_2)\] and $c_2-c_1$ has all its coordinates in $\C^n$ different from zero. Suppose that $s\in J_r$ is such that
\[BA(s)(Q_1) \cap  BA(s)(Q_2)\neq \emptyset.\]
Then there are $\tilde{c}_1\in Q_1$, $\tilde{c}_2\in Q_2$ verifying $B\circ A(s)(\tilde{c}_1)=B\circ A(s)(\tilde{c}_2)$. We conclude that
\[|B\circ A(s)(c_2-c_1)|=O(\rho).\]
Define $x=\frac{c_2-c_1}{|c_2-c_1|}$, hence
\[|B\circ A(s)(x)|=O\left(\frac{\rho}{dist(Q_1,Q_2)}\right).\]
By the first lineal algebra lemma we get that
\[dist(A(s)(x), Ker(B))=O\left(\frac{\rho}{dist(Q_1,Q_2)}\right).\]
Up until now we have proven that there is a constant $C_1$, depending only on $r$ and $B$, such that
\[dist(A(s)(x), Ker(B))\leq C_1 \cdot \frac{\rho}{dist(Q_1,Q_2)}.\]
Notice that if
\[\frac{\rho}{dist(Q_1,Q_2)} \geq \frac{e^{-r} \sin \theta}{4 C_1},\]
then the proposition follows taking $C =(2r)^{n-1} (2\pi)^n \left(\frac{4C_1}{e^{-r} \sin \theta}\right)^{l}$. This is thanks to the fact that
\[\nu(\{s\in J_r: B\circ A(s)(Q_1) \cap  B\circ A(s)(Q_2)\neq \emptyset\})\leq (2r)^{n-1} (2\pi)^n.\]
For the rest of the proof we suppose $\frac{\rho}{dist(Q_1,Q_2)} < \frac{e^{-r} \sin \theta}{4 C_1}$. Define $a=\frac{e^{-r} \sin \theta}{4n}$ and write $x\in \C^n$ as
\[x=(e^{\chi_1+i\phi_1},...,e^{\chi_n+i\phi_n}).\]
Consider the set
\[A=\{j\in \{1,...,n\}:e^{\chi_j}\geq a e^{-r}\},\]
and the subspace $\C^A$, we will see that $dim_{\R}\C^A\geq l$. Let $u\in \C^A$ be the orthogonal projection of $A(s)(x)$ in $\C^A$. By the definition of $\C^A$ we have $|A(s)x-u|< na$. Given that $\|A(s)\|\geq e^{-r}$ (remember that $s\in J_r$) and $na<e^{-r}/2$ we have
\[|u|\geq |A(s)(x)|-|A(s)x-u|> e^{-r}-n\cdot a > \frac{1}{2}e^{-r}.\]
If $dim_{\R} \C^A<l$, the transversality implies $\C^A \cap Ker(B)=\{0\}$ and then, by the choice of $\theta$, we would get
\[dist(u,Ker(B))\geq |u|\sin \theta \geq \frac{e^{-r}\sin \theta}{2}.\]
But this is not possible since
\begin{align*}
dist(u,Ker(B))&\leq |u-A(s)(x)|+dist(A(s)(x),Ker(B))\\
&\leq na+ C_1 \cdot \frac{\rho}{dist(Q_1,Q_2)} < \frac{e^{-r}\sin \theta}{2}.
\end{align*}

Given that $dim_{\R}\C^A\geq l$, the transversality condition implies $Ker(B)+\C^A=\C^n$. Let $L=\C^A\cap (Ker(B)\cap \C^A)^{\perp}$.
Define the $\R$-linear function $\hat{x}:\C^n\to \C^n$ given by
\[\hat{x}(z_1,...,z_n)=(e^{\chi_1+i\phi_1}\cdot z_1,...,e^{\chi_n+i\phi_n}\cdot z_n).\]
Notice that $A(s)(x)=\hat{x}([s])$, where \[[s]=(e^{t_1+iv_1},...,e^{t_{n-1}+iv_{n-1}},e^{iv_n}).\]
Write $\hat{x}([s])=b_1+b_2$ where $b_1\in L$, $b_2\in Ker(B)$. The second lemma in linear algebra implies
\[|b_1|\leq \frac{dist(b_1,Ker(B))}{\sin \theta}=\frac{dist(\hat{x}([s]),Ker(B))}{\sin \theta}=O\left(\frac{\rho}{dist(Q_1,Q_2)}\right).\]
Given that $b_1\in \C^A$ we get that $|\hat{x}^{-1}(b_1)|=O\left(\frac{\rho}{dist(Q_1,Q_2)}\right)$. Therefore, $[s]=\hat{x}^{-1}(b_1)+\hat{x}^{-1}(b_2)$ implies
\[dist([s],\hat{x}^{-1}(Ker(B)))=O\left(\frac{\rho}{dist(Q_1,Q_2)}\right).\]
This last inequality tells us that the vector $[s]$ is close to a $2n-l$ subspace. Moreover, the last coordinate of this vector has modulus 1. This two properties will allow us to obtain the desired estimate.

Consider the set
\[H=\{(z_1,...,z_n)\in \hat{x}^{-1}( Ker(B)):\,|z_n|=1,\,|z_j|\in [e^{-2r},e^{2r}],\,j=1,...,n-1\}.\]
We have proven that there is a constant $C_2>0$, depending only on $B$ and $r$, such that
\[dist([s],\hat{x}^{-1}(Ker(B))) \leq C_2 \cdot \frac{\rho}{dist(Q_1,Q_2)}.\]
Thus there is a constant $C_3>0$, depending only on $B$ and $r$, such that
\[dist([s],H) \leq C_3 \cdot \frac{\rho}{dist(Q_1,Q_2)}.\]
In fact, let $u=(u_1,...,u_n)\in \hat{x}^{-1}(Ker(B))$ such that $|u-[s]|\leq C_2\cdot \frac{\rho}{dist(Q_1,Q_2)}$. We have
\[1-C_2 \cdot \frac{\rho}{dist(Q_1,Q_2)}\leq |u_n| \leq 1+C_2 \cdot \frac{\rho}{dist(Q_1,Q_2)},\]
and
\[e^{-r}-C_2 \cdot \frac{\rho}{dist(Q_1,Q_2)} \leq |u_j|\leq e^r+C_2 \cdot \frac{\rho}{dist(Q_1,Q_2)},\]
for $j=1,...,n-1$. Therefore, if $\frac{\rho}{dist(Q_1,Q_2)}$ is small enough one has that $u/|u_n|$ is in $H$ and $|[s]-(u/|u_n|)|=O(\frac{\rho}{dist(Q_1,Q_2)})$. If $\frac{\rho}{dist(Q_1,Q_2)}$ is big the proposition follows choosing $C$ properly, as it was done before when we considered the case $\frac{\rho}{dist(Q_1,Q_2)} \geq \frac{e^{-r} \sin \theta}{4 C_1}$.\\

Define the function
\[\varphi:\{(z_1,...,z_n) \in \mathbb{S}^{2n-1} \cap \hat{x}^{-1}( Ker(B)): \,\frac{|z_j|}{|z_n|}\in [e^{-2r},e^{2r}],\,j=1,...,n-1\} \to H,\]
given by
\[\varphi(z_1,...,z_n)=(z_1/|z_n|,...,z_n/|z_n|).\]
Notice that $\varphi$ is surjective and smooth. Moreover, one has that $\|D\varphi\|$ is bounded by a constant depending only on $r$ and $n$. Since the domain of $\varphi$ is contained in a $(2n-l-1)$-dimensional unit sphere inside $\hat{x}^{-1}(Ker(B))$, there exist $w_1,...,w_p\in H$ such that $H$ is covered by the balls $B_{\frac{\rho}{dist(Q_1,Q_2)}}(w_j)$, $j=1,...,p$, and 
\[p=O\left(\left(\frac{\rho}{dist(Q_1,Q_2)}\right)^{-(2n-l-1)}\right).\]

We conclude that
\[[s]\in \bigcup_{j=1}^{p} B_{C_4\cdot \frac{\rho}{dist(Q_1,Q_2)}}(w_j),\]
for a constant $C_4>0$ depending only in $r$ and $B$.

Writing $w_j$ as
\[w_j=(w_{j,1},...,w_{j,n}),\]
we obtain that for some $1\leq j\leq p$
\[|e^{iv_n}-w_{j,n}|\leq C_4\cdot \frac{\rho}{dist(Q_1,Q_2)},\]
\[|e^{t_q+iv_q}-w_{j,q}|\leq C_4\cdot \frac{\rho}{dist(Q_1,Q_2)},\]
$q=1,...,n-1$. Notice that $|w_{j,q}|$ is bounded below by $e^{-2r}$, hence
\begin{align*}
    |t_q-\log |w_{j,q}|| &\leq C_5\cdot \frac{\rho}{dist(Q_1,Q_2)},\\
    \|v_q-arg(w_{j,q})\|&\leq C_5\cdot \frac{\rho}{dist(Q_1,Q_2)},
\end{align*}
$q=1,...,n$, where $C_5$ is a constant depending only in $r$ and $B$, and $arg(w_{j,q})\in \T$ is the argument of $w_{j,q}$. This implies that the set $\{s\in J_r: B\circ A(s)(Q_1) \cap  B\circ A(s)(Q_2)\neq \emptyset\}$ is contained in the union of $p$ sets, each one with a $\nu$-volume of order $O \left( \left( \frac{\rho}{dist(Q_1,Q_2)}\right)^{2n-1} \right)$. Finally, using the order of $p$, we conclude that
\[\nu(\{s\in J_r: B\circ A(s)(Q_1) \cap  B\circ A(s)(Q_2)\neq \emptyset\})=  O \left(\left(\frac{\rho}{dist(Q_1,Q_2)}\right)^{(2n-1)-(2n-l-1)}\right),\]
as we wanted.
\end{proof}

Proposition \ref{prop:transversality} implies that there is a constant $C>0$ such that for any $\tb^1,...,\tb^n$ and $Q_1,Q_2\in R( \rho)$ we have
\[\nu(\{s\in J_r: B\circ \pi_{\tb^1,...,\tb^n,s}(Q_1) \cap  B\circ \pi_{\tb^1,...,\tb^n,s}(Q_2)\neq \emptyset\})\leq C \left(\frac{\rho}{dist(Q_1,Q_2)}\right)^l.\]
The constant $C>0$ depends on $B$, $r$, $c_0$ and the Cantor sets $K_1$,...,$K_n$, but it is independent of $\rho$. For the next proposition we will also need the following fact: if $p$ is big enough then for any $Q_1\in R(\rho)$ and $a\in \mathbb{Z}$ we have
\[\# \{Q_2\in R(\rho):p^{-a}\leq dist(Q_1,Q_2)< p^{-a+1}\}=O((p^{-a})^{d_1+...+d_n}\rho^{-(d_1+...+d_n)}),\]
where $d_j=HD(K_j)$, $j=1,...,n$. For a proof see lemma 1.2.3 in \cite{Z}.
\begin{prop}
Assume $d_1+...+d_n< l$. Let
\[N_{\rho}(\tb^1,..,\tb^n,s)=\# \{(Q_1,Q_2)\in R(\rho)^2: B\circ \pi_{\tb^1,..,\tb^n,s}(Q_1) \cap  B\circ \pi_{\tb^1,..,\tb^n,s}(Q_2)\neq \emptyset \}.\]
Then for any $\tb^1,...,\tb^n$ we have
\[\int_{J_r}N_{\rho}(\tb^1,...,\tb^n,s)ds=O(\rho^{-(d_1+...+d_j)}),\]
and the constant in the $O$ notation is independent of $\tb^1,...,\tb^n$.
\end{prop}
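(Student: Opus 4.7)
The plan is to unfold the definition of $N_\rho$ and apply Fubini, which gives
\[\int_{J_r}N_\rho(\tb^1,\dots,\tb^n,s)\,ds = \sum_{(Q_1,Q_2)\in R(\rho)^2}\nu\bigl(\{s\in J_r:B\circ\pi_{\tb^1,\dots,\tb^n,s}(Q_1)\cap B\circ\pi_{\tb^1,\dots,\tb^n,s}(Q_2)\neq\emptyset\}\bigr).\]
Each summand is controlled in two complementary ways: trivially by $\nu(J_r)$, and by the consequence of Proposition \ref{prop:transversality} displayed right before the statement, which gives $O\bigl((\rho/dist(Q_1,Q_2))^l\bigr)$ with a constant that, crucially, is independent of $\tb^1,\dots,\tb^n$.

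I would then bin the pairs $(Q_1,Q_2)$ by the dyadic scale of their distance. Fix a large integer $p$ as in the counting estimate cited just before the statement, and let $S_a=\{(Q_1,Q_2)\in R(\rho)^2 : p^{-a}\leq dist(Q_1,Q_2)<p^{-a+1}\}$. Since $\#R(\rho)=O(\rho^{-(d_1+\cdots+d_n)})$, the cited estimate yields
\[\#S_a = O\bigl((p^{-a})^{d_1+\cdots+d_n}\,\rho^{-2(d_1+\cdots+d_n)}\bigr).\]
For pairs in $S_a$ with $p^{-a}\gtrsim \rho$, the transversality estimate gives a per-pair mass of $O(\rho^{l}p^{al})$, so the total contribution at scale $a$ is
\[O\bigl(\rho^{l-2(d_1+\cdots+d_n)}\, p^{a(l-(d_1+\cdots+d_n))}\bigr).\]
Because $l>d_1+\cdots+d_n$ by hypothesis, this is a geometric series in $a$ dominated by its top scale $a\sim \log_p(1/\rho)$, at which the expression equals $\rho^{l-2(d_1+\cdots+d_n)}\cdot \rho^{-(l-(d_1+\cdots+d_n))}=\rho^{-(d_1+\cdots+d_n)}$. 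The remaining close pairs, with $dist(Q_1,Q_2)\lesssim \rho$, contain only $O(1)$ neighbors per $Q_1$ (the counting estimate at $p^{-a}\asymp \rho$ is trivial), and bounding their summand by $\nu(J_r)$ produces a total of $O(\rho^{-(d_1+\cdots+d_n)})$ as well. Adding the two contributions yields the claimed bound.

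I do not anticipate a genuine obstacle. Two bookkeeping points deserve care: first, every constant appearing in the argument must be independent of $\tb^1,\dots,\tb^n$, which holds because both Proposition \ref{prop:transversality} and the counting estimate for $R(\rho)$ provide constants depending only on $B$, $r$, $c_0$ and the Cantor sets $K_1,\dots,K_n$; second, summability of the geometric series from above is exactly what the hypothesis $d_1+\cdots+d_n<l$ ensures, so this is where the dimensional assumption is used.
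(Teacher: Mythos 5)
Your proposal is correct and follows essentially the same route as the paper: Fubini, the per-pair transversality bound $O((\rho/dist(Q_1,Q_2))^l)$, binning pairs by the $p$-adic scale of their distance, the counting estimate for neighbors at each scale, a geometric series convergent precisely because $l>d_1+\cdots+d_n$, and the trivial bound $\nu(J_r)$ for the $O(\rho^{-(d_1+\cdots+d_n)})$ close pairs. No gaps.
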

\begin{proof}
Since the Cantor sets $K_1$,..., $K_n$ have bounded geometries then there is a constant $C_1$, independent of $\rho$, such that $dist(Q_1,Q_2)\geq C_1 \rho$ for any $Q_1,Q_2\in R(\rho)$, $Q_1\cap Q_2=\emptyset$. Let $k_0\in \mathbb{Z}$ such that $p^{-k_0}\leq C_1 \rho < p^{-k_0+1}$. Using the previous lemma we have
\begin{align*}
\int_{J_r} N_{\rho}&(\tb^1,...,\tb^n,s)ds\\
&= \sum_{Q_1,Q_2 \in R(\rho)} \nu(\{s\in J_r: B\circ \pi_{\tb^1,...,\tb^n,s}(Q_1) \cap  B\circ \pi_{\tb^1,...,\tb^n,s}(Q_2)\neq \emptyset \})\\
&=\sum_{Q_1 \in R(\rho)} \sum_{k=-\infty}^{k_0} \sum_{dist(Q_1,Q_2)\in [p^{-k},p^{-k+1})} O(\rho^l/[dist(Q_1,Q_2)]^l)\\
&+ \sum_{Q_1 \in R(\rho)} \sum_{Q_2 \cap Q_1\neq \emptyset} (2r)^{n-1}.
\end{align*}
Clearly \[\sum_{Q_1 \in R(\rho)} \sum_{Q_2 \cap Q_1\neq \emptyset} (2r)^{n-1}=O(\# R(\rho))=O(\rho^{-(d_1+...+d_n)}).\]
On the other hand
\begin{align*}
\sum_{Q_1 \in R(\rho)} &\sum_{k=-\infty}^{k_0} \sum_{dist(Q_1,Q_2)\in [p^{-k},p^{-k+1})} O(\rho^l/[dist(Q_1,Q_2)]^l)=\\
&=\sum_{Q_1 \in R(\rho)} \sum_{k=-\infty}^{k_0} O((p^k)^{l-(d_1+...+d_n)}\rho^{l-(d_1+...+d_n)})\\
&=\sum_{Q_1 \in R(\rho)} O((p^{k_0})^{l-(d_1+...+d_n)}\rho^{l-(d_1+...+d_n)})\sum_{k=-\infty}^{0} (p^{l-(d_1+...+d_n)})^k\\ 
&= O(\rho^{-(d_1+...+d_n)}\rho^{-l+(d_1+...+d_l)}\rho^{l-(d_1+...+d_n)})=O(\rho^{-(d_1+...+d_n)}).
\end{align*}
\end{proof}

\begin{prop} \label{prop:marstrand}
Let $b>0$, $F\subset R(\rho)$ such that $\#F\geq b\rho^{-(d_1+...+d_n)}$. Let $(\tb^1,...,\tb^n,s)$ such that $N_{\rho}(\tb^1,...,\tb^n,s)\leq a \rho^{-(d_1+...+d_n)}$, then there exist a subset $T\subset F$ with the properties that
\[B\circ \pi_{\tb^1,...,\tb^n,s}(Q_1) \cap  B\circ \pi_{\tb^1,...,\tb^n,s}(Q_2)= \emptyset,\]
for all $Q_1,Q_2 \in T$, $Q_1\neq Q_2$, and 
\[\#T \geq \frac{b^2}{4a}\rho^{-(d_1+...+d_n)}.\]
\end{prop}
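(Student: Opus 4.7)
The plan is a standard graph-theoretic argument: construct a conflict graph on $F$ and extract a large independent set. Set $D := d_1 + \cdots + d_n$, and form the graph $G$ on vertex set $F$ by joining $Q_1 \neq Q_2$ whenever $B\circ \pi_{\tb^1,\ldots,\tb^n,s}(Q_1) \cap B\circ \pi_{\tb^1,\ldots,\tb^n,s}(Q_2) \neq \emptyset$. The desired set $T$ is exactly an independent set of $G$, so the task reduces to finding one of size at least $\frac{b^2}{4a}\rho^{-D}$.

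The key inputs are two trivial observations about $N_\rho(\tb^1,\ldots,\tb^n,s)$. First, since $N_\rho$ counts \emph{ordered} pairs in $R(\rho)^2$ with intersecting images and in particular counts the diagonal pairs $(Q,Q)$, one has $N_\rho \geq \#R(\rho) \geq \#F$, so the hypotheses force $a \geq b$. Second, the number of ordered pairs of distinct vertices in $F$ with intersecting images is also bounded by $N_\rho$, giving $\sum_{Q \in F} \deg_G(Q) \leq N_\rho$ and hence $\bar d := \frac{1}{\#F}\sum_{Q\in F} \deg_G(Q) \leq N_\rho/\#F$.

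To close the argument, I would invoke the Caro--Wei / Tur\'an bound $\alpha(G) \geq \#F/(1+\bar d)$, which yields
\[\#T \;\geq\; \frac{\#F}{1 + N_\rho/\#F} \;=\; \frac{(\#F)^2}{\#F + N_\rho} \;\geq\; \frac{(\#F)^2}{2 N_\rho} \;\geq\; \frac{b^2}{2a}\rho^{-D} \;\geq\; \frac{b^2}{4a}\rho^{-D},\]
using $\#F \leq N_\rho$ for the second inequality and the hypotheses $\#F \geq b\rho^{-D}$, $N_\rho \leq a\rho^{-D}$ for the third. If a self-contained proof is preferred, the same conclusion (still well within the $1/4$ slack) follows from Markov's inequality applied to the degree function: retain the set $F' := \{Q \in F : \deg_G(Q) \leq 2\bar d\}$, of size $\geq \#F/2$, and then build $T$ greedily inside $F'$, where each step removes at most $1 + 2\bar d$ vertices.

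I do not expect any substantive obstacle; the whole argument is routine combinatorial bookkeeping, and the generous constant $1/4$ in the target means the precise constants in the Markov/greedy step are immaterial. The only genuine content is the observation that the hypothesis on $N_\rho$, which a priori bounds a sum over $R(\rho)^2$ rather than $F^2$, immediately bounds $\sum_{Q\in F} \deg_G(Q)$ because $F \subset R(\rho)$.
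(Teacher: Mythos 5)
Your primary (Caro--Wei) argument is correct and is essentially the paper's proof in averaged form: the paper likewise bounds $\sum_{Q\in F} n(Q)\leq N_\rho$, keeps the at-least-half of $F$ with $n(Q)\leq 2a/b$ (Markov), and extracts $T$ greedily, the only cosmetic difference being that the paper's $n(Q)$ counts $Q$ itself, so each greedy step removes $n(Q)\leq 2a/b$ elements rather than $1+\deg_G(Q)$, which is exactly what produces the constant $b^2/(4a)$. One small correction: your ``self-contained'' backup as written only yields $\#T\geq \frac{\#F/2}{1+2\bar d}\geq \frac{b^2}{2(b+2a)}\rho^{-(d_1+\cdots+d_n)}\geq \frac{b^2}{6a}\rho^{-(d_1+\cdots+d_n)}$ (using $b\leq a$), which falls short of $b^2/(4a)$, so that variant is not ``within the slack''; this is immaterial since the Caro--Wei route already gives the stronger bound $b^2/(2a)$.
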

\begin{proof}
For $Q_0\in F$ define 
\[n(Q_0)=\#\{Q\in F: B\circ \pi_{\tb^1,...,\tb^n,s}(Q) \cap  B\circ \pi_{\tb^1,...,\tb^n,s}(Q_0)\neq \emptyset\}.\]
We have
\[\sum_{Q \in F} n(Q)\leq N_{\rho}(\tb^1,...,\tb^n,s) \leq a \rho^{-(d_1+...+d_n)}. \]
Therefore the set $T_0=\{Q\in F: n(Q)\leq 2a/b\}$ has at least $(1/2)\#F$ elements. Finally it is clear that from $T_0$ we can extract a subset $T$ with at least $\frac{1}{2a/b}\#T_0$ elements and such that $B\circ \pi_{\tb^1,...,\tb^n,s}(Q_1) \cap  B\circ \pi_{\tb^1,...,\tb^n,s}(Q_2)= \emptyset$ for any $Q_1,Q_2 \in T$. For this set we have
\[\#T \geq \frac{1}{2a/b}\#T_0 \geq \frac{1}{2a/b}\frac{1}{2}b \rho^{-(d_1+...+d_n)}=\frac{b^2}{4a}\rho^{-(d_1+...+d_n)}.\]
\end{proof}
Notice that since $\int_{J_r} N_{\rho}(\tb^1,...,\tb^n,s)ds=O(\rho^{-(d_1+...+d_m)})$ then choosing $a$ big enough we can guaranteed that the set 
\[\{s\in J_r: N_{\rho}(\tb^1,...,\tb^n,s)> a\rho^{-(d_1+...+d_n)}\}\]
has measure as small as we want. Thus, for every $(\tb^1,...,\tb^n)$ we have that most of the $s\in J_r$ verify the property of the last proposition, i.e. for any family $F\subset R(\rho)$ with $\# F\geq b\rho^{-(d_1+...+d_n)}$ there exist a positive proportion of $F$, $T\subset F$ with $\#T\geq (b^2/(4a))\rho^{-(d_1+...+d_n)}$, such that elements of $T$ project to $\R^l$, in the direction of $s$, to disjoint sets: $B\circ \pi_{\tb^1,...,\tb^n,s}(Q_1) \cap  B\circ \pi_{\tb^1,...,\tb^n,s}(Q_2)=\emptyset$, $\forall Q_1,Q_2 \in T$. This is what we call the discrete Marstrand property.\\

The next lemma guarantees that the property of Prop. \ref{prop:marstrand} still holds for small perturbations of $B\circ \pi_{\tb^1,...,\tb^n,s}$, it is inspired in the presentation given by Shmerkin \cite{shmerkin}.

\begin{lem}\label{perturbacion}
Let $T\subset R(\rho)$, $\phi$ a function defined on a neighborhood of $\cup_{Q\in T} Q$ into $\R^l$, and $L,\tau>0$ real numbers. Suppose that for each $Q\in T$ we have 
\[B_{L^{-1}\rho}(c_Q)\subset \phi(Q) \subset B_{L\rho}(c_Q),\]
for some $c_Q\in \R^l$, and $\phi(Q_1)\cap \phi(Q_2)= \emptyset$, $\forall Q_1,Q_2\in T$, $Q_1\neq Q_2$.
Then for any $\psi$ defined on a neighborhood of $\cup_{Q\in T} Q$, such that $\|\phi- \psi\|<\tau\rho$, there exist $T'\subset T$ such that 
\[\#T'\geq [3L(L+\tau)]^{-l}\cdot \#T\]
and $\psi(Q_1)\cap \psi(Q_2)= \emptyset$, $\forall Q_1,Q_2\in T'$, $Q_1\neq Q_2$.
\end{lem}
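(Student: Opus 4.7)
The plan is a standard greedy packing argument carried out on the center points $c_Q\in\R^l$, $Q\in T$.

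First observation: by the triangle inequality, $\|\phi-\psi\|<\tau\rho$ together with $\phi(Q)\subset B_{L\rho}(c_Q)$ yields $\psi(Q)\subset B_{(L+\tau)\rho}(c_Q)$ for every $Q\in T$. Second observation: since the sets $\phi(Q)$ are pairwise disjoint and each one contains $B_{L^{-1}\rho}(c_Q)$, the centers $c_Q$ are pairwise at distance at least $2L^{-1}\rho$. Note also that the chain of inclusions $B_{L^{-1}\rho}(c_Q)\subset\phi(Q)\subset B_{L\rho}(c_Q)$ forces $L\ge 1$, a fact I use at the end.

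The key reduction is that it suffices to extract a subfamily $T'\subset T$ whose centers are pairwise separated by more than $2(L+\tau)\rho$: the enclosing balls $B_{(L+\tau)\rho}(c_Q)$, $Q\in T'$, are then pairwise disjoint, and a fortiori so are the sets $\psi(Q)$, which is what the lemma demands.

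To produce such $T'$ I run the obvious greedy procedure: while $T\neq\emptyset$, pick any $Q\in T$, add it to $T'$, and delete from $T$ every $Q'$ satisfying $|c_{Q'}-c_Q|\le 2(L+\tau)\rho$. The number of elements deleted at a single step is bounded by the maximum number of $2L^{-1}\rho$-separated points that fit inside a closed Euclidean ball of radius $2(L+\tau)\rho$ in $\R^l$. A volume comparison, packing disjoint open balls of radius $L^{-1}\rho$ around those points into a ball of radius $2(L+\tau)\rho+L^{-1}\rho$, bounds this count by $(2L(L+\tau)+1)^l$. Since $L\ge 1$ gives $L(L+\tau)\ge 1$, one has $2L(L+\tau)+1\le 3L(L+\tau)$, so each greedy step removes at most $[3L(L+\tau)]^l$ elements; the process therefore yields $\#T'\ge [3L(L+\tau)]^{-l}\cdot\#T$, as required. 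The only mildly delicate point is tracking the constant and exploiting $L\ge 1$ to fold the additive $+1$ into the multiplicative factor; the geometric core is completely routine Vitali-style combinatorics.
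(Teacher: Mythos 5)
Your argument is correct and rests on the same Vitali-style packing of the enclosing balls $B_{(L+\tau)\rho}(c_Q)$ against the pairwise disjoint inner balls $B_{L^{-1}\rho}(c_Q)$ that the paper uses: the paper invokes the Vitali covering lemma and compares volumes of the unions directly, obtaining $[3L(L+\tau)]^{-l}$ at once, while you run the equivalent greedy separation and bound the multiplicity locally, which is why you need the extra (valid) observation $L\ge 1$ to absorb the additive $+1$ into the constant. Both routes yield the stated bound, so this is essentially the paper's proof.
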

\begin{proof}
$\|\psi-\phi\|<\tau \rho $ implies $\psi(Q)\subset B_{(L+\tau)\rho}(c_Q)$. Use Vitali covering lemma for the family $\{B_{(L+\tau)\rho}(c_Q):Q\in T\}$.
We get $T'\subset T$ such that $\{B_{(L+\tau)\rho}(c_Q):Q\in T'\}$ is a pairwise disjoint family and
\[\bigcup_{Q\in T} B_{(L+\tau)\rho}(c_Q)\subset \bigcup_{Q\in T'} B_{3(L+\tau)\rho}(c_Q).\]
From this we get
\begin{align*}
\# T' \cdot [3(L+\tau)\rho]^l w_l &\geq Vol\left(\bigcup_{Q\in T} B_{(L+\tau)\rho}(c_Q)\right)\\
&\geq Vol\left(\bigcup_{Q\in T} B_{L^{-1}\rho}(c_Q)\right)\\
&=\#T\cdot L^{-l}\rho^l w_l,
\end{align*}
where $w_l$ is the volume of the $l$-dimensional unitary ball. Hence
\[\#T'\geq [3L(L+\tau)]^{-l}\cdot \#T.\]
\end{proof}

\subsection{Proof of the Dimension Formula}

In this subsection we prove the desired dimension formula (theorem B). Assume we have $K_1,...,K_n$ satisfying the hypothesis of the scale recurrece lemma. We start by using the discrete Marstrand property and the scale recurrence lemma to obtain for each limit geometry $(\tb^1,...,\tb^n)$ a set of "good" directions to project. Fix $c_0,r>0$ big enough, let $c_1,c_2,c_3,\rho_0$ be the constants given by the scale recurrence lemma. Suppose that $h$ is a $C^1$ function defined in neighborhood of $K_1\times...\times K_n$ such that there is a point $x_0$, in the product of the Cantor sets, where $B=Dh(x_0)$ verifies the transversality hypotheses. By the results in subsection \ref{Discrete Marstrand Theorem}, we can fix $a>0$ big enough such that
\[\nu(J_r\setminus \{s\in J_r: N_{\rho}(\tb^1,...,\tb^n,s)\leq a\rho^{-(d_1+...+d_n)}\})<c_1,\]
for all $(\tb^1,...,\tb^n)$. Define
\[F(\tb^1,...,\tb^n)=\{s\in J_r: N_{\rho}(\tb^1,...,\tb^n,s)\leq a\rho^{-(d_1+...+d_n)}\},\]
and for $(\ul{a}^1,...,\ul{a}^n)\in \Sigma_{1}(c_0,\rho)\times...\times \Sigma_{n}(c_0,\rho)$
\[F(\ul{a}^1,...,\ul{a}^n)=\bigcup_{\tb^1,...,\tb^n} F(\tb^1,...,\tb^n),\]
where the union is over all $\tb^1,...,\tb^n$ ending in $\ul{a}^1,...,\ul{a}^n$ respectively.
We clearly have $\nu(J_r\setminus F(\ul{a}^1,...,\ul{a}^n))<c_1$, thus we can apply the scale recurrence lemma (from now on we assume $\rho<\rho_0$) to obtain sets $F^*(\ul{a}^1,...,\ul{a}^n)$ with the following properties:
\begin{itemize}
\item[(i)] $F^*(\ul{a}^1,...,\ul{a}^n)\subset V_{c_2\rho}(F(\ul{a}^1,...,\ul{a}^n))$.
\item[(ii)] Let $(\ul{a}^1,...,\ul{a}^n)\in R(\rho)$, $(t,v)\in F^*(\ul{a}^1,...,\ul{a}^n)$; there exist at least $c_3\rho ^{-(d_1+...+d_n)}$ tuples $(\ul{b}^1,...,\ul{b}^n)\in R(\rho)$ (with $\ul{b}^1$,...., $\ul{b}^n$ starting with the last letter of $\ul{a}^1$,..., $\ul{a}^n$) such that, if $\tb^1 \in \Sigma_1^{-}$,..., $\tb^n \in \Sigma_n^{-}$ end respectively with $\ul{a}^1$,..., $\ul{a}^n$ and
\[T_{\ul{b}^1,...,\ul{b}^n}(\tb^1,...,\tb^n,t,v)=(\tb^1\ul{b}^1,...,\tb^1\ul{b}^1,\tilde{t},\tilde{v})\]
the $\rho$-neighborhood of $(\tilde{t},\tilde{v})\in J$ is contained in $F^*(\ul{b}^1,...,\ul{b}^n)$.
\item[(iii)] $\nu(F^*(\ul{a}^1,...,\ul{a}^n))\geq \nu(J_r)/2$ for at least half of the $(\ul{a}^1,...,\ul{a}^n)\in R(\rho)$.
\end{itemize}

\begin{thm}\label{thm:qfdim}
Suppose that $d_1+...+d_n<l$ and for any $(\tb^1,...,\tb^n,s)\in \Sigma_1^-\times...\times \Sigma_n^-\times J_r$ there exists $(\ul{c}^1,...,\ul{c}^n)\in \Sigma_1^{fin}\times ...\times \Sigma_n^{fin}$ such that $T_{\ul{c}^1,...,\ul{c}^n}(\tb^1,...,\tb^n,s)=(\tb^1\ul{c}^1,...,\tb^n\ul{c}^n,\tilde{s})$ and $\tilde{s}\in F^*(\tilde{\ul{a}}^1,...,\tilde{\ul{a}}^n)$ for some $(\tilde{\ul{a}}^1,...,\tilde{\ul{a}}^n)$ for which $(\tb^1\ul{c}^1,...,\tb^n\ul{c}^n)$ ends in it. Let $h$ be any $C^{1}$ function defined on a neighborhood of $K_1\times...\times K_n$ into $\R^l$ such that there exist a point $x_0\in K_1\times ...\times K_n$ where $Dh(x_0)$ verifies the transversality hypotheses, then
\[HD(h(K_1\times...\times K_n))=d_1+...+d_n.\] 
\end{thm}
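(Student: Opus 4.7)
The upper bound $HD(h(K_1\times\cdots\times K_n))\leq d_1+\cdots+d_n$ is immediate, since $h$ is Lipschitz on the compact set $K_1\times\cdots\times K_n$, whose Hausdorff dimension is $d_1+\cdots+d_n$. For the lower bound the plan is to construct, for every $s<d_1+\cdots+d_n$, a rooted tree $N$ whose nodes at level $k$ are tuples $p=(\ul a^1,\ldots,\ul a^n)\in R(\rho^k)$, together with sets $G(p)\subset \R^l$ contained in $h(K_1\times\cdots\times K_n)$, fulfilling the hypotheses of Corollary \ref{distribucion}. That will yield $HD(h(K_1\times\cdots\times K_n))\geq s$, after which one lets $s\nearrow d_1+\cdots+d_n$.

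The root is chosen using the standing recurrence hypothesis: starting from a reference $(\tb^1,\ldots,\tb^n,s_0)$ concentrated near $x_0$, some word renormalizes the parameter into $F^*(\tilde{\ul a}^1,\ldots,\tilde{\ul a}^n)$ for some tuple. The inductive step is the heart of the argument. At a node $p=(\ul a^1,\ldots,\ul a^n)$ with $(\tb^1,\ldots,\tb^n,t,v)\in F^*(p)$, property (ii) of the scale recurrence lemma produces at least $c_3\rho^{-(d_1+\cdots+d_n)}$ candidate child words $(\ul b^1,\ldots,\ul b^n)$ for which the renormalized parameter $(\tilde t,\tilde v)$ carries a full $\rho$-neighborhood inside $F^*$. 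Since $F^*\subset V_{c_2\rho}(F)$, there exists $s'\in F(\tilde\tb^1,\ldots,\tilde\tb^n)$ within $c_2\rho$ of $(\tilde t,\tilde v)$, so $N_\rho(\tilde\tb^1,\ldots,\tilde\tb^n,s')\leq a\rho^{-(d_1+\cdots+d_n)}$. Proposition \ref{prop:marstrand} then prunes the candidates to a subset of size $\gtrsim \rho^{-(d_1+\cdots+d_n)}$ whose images under the linear model $B\circ\pi_{\tilde\tb^1,\ldots,\tilde\tb^n,s'}$ (with $B=Dh(x_0)$) are pairwise disjoint; by the transversality of $B$, each image contains an $l$-ball of radius $\asymp\rho$.

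Next I transfer the disjointness from the linear model back to $h$. By equation (\ref{eq:ren}) and Lemma \ref{ren:atr}, after rescaling on the left by the natural affine map $L$, the composition $L\circ h\circ(f_{\ul a^1\ul b^1},\ldots,f_{\ul a^n\ul b^n})$ is uniformly $o(1)$-close to the linear model as the generation depth grows (the error $E(\max r_{\ul a^j\ul b^j})$ tends to $0$). Lemma \ref{perturbacion} therefore applies with $\tau=o(1)$ and retains a constant proportion of the pruned candidates whose genuine $h$-images are pairwise disjoint inside $h(G(\ul a^1)\times\cdots\times G(\ul a^n))$, each containing a ball of radius $\asymp \rho\cdot diam(h(G(\ul a^1)\times\cdots\times G(\ul a^n)))$. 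Declaring these to be the children $Ch(p)$ and letting $G(p)$ be a ball comparable to $h(G(\ul a^1)\times\cdots\times G(\ul a^n))$, assumptions (a)--(e) of Corollary \ref{distribucion} follow from nesting, bounded geometry of conformal Cantor sets, and the ball-containment property just obtained, while condition (f)
\[\sum_{p'\in Ch(p)} diam(G(p'))^s \geq diam(G(p))^s\]
reduces to $(\#Ch(p))\cdot\rho^s\gtrsim 1$, which holds for all sufficiently small $\rho$ because $\#Ch(p)\gtrsim\rho^{-(d_1+\cdots+d_n)}$ and $s<d_1+\cdots+d_n$.

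The main obstacle, as I see it, lies in the bookkeeping of the transfer step: the constants lost in Proposition \ref{prop:marstrand} (the ratio $b^2/(4a)$), in Lemma \ref{perturbacion} (the factor $[3L(L+\tau)]^{-l}$), and in the convergence $E(\cdot)\to 0$ of Lemma \ref{ren:atr} must be arranged to combine into a depth-independent bound $\#Ch(p)\gtrsim\rho^{-(d_1+\cdots+d_n)}$. The saving is that all these constants depend only on $h$, $B$, $r$, $c_0$ and the Cantor sets, not on the depth, while the surplus factor $\rho^{s-(d_1+\cdots+d_n)}$ grows without bound as $\rho\to 0$; fixing $\rho$ small enough (once and for all) absorbs every multiplicative loss. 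Once this uniformity is secured, the tree extends to all levels, Corollary \ref{distribucion} provides $HD\bigl(\bigcap_k\bigcup_{p\in I(k)}G(p)\bigr)\geq s$, and letting $s\nearrow d_1+\cdots+d_n$ finishes the proof.
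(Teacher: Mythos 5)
Your proposal is correct and follows essentially the same route as the paper: the same rooted-tree construction fed into Corollary \ref{distribucion}, with the children at each node produced by property (ii) of the scale recurrence lemma, pruned by Proposition \ref{prop:marstrand}, and transferred from the linear model to $h$ via Lemma \ref{ren:atr} and Lemma \ref{perturbacion}, all at a single scale $\rho$ fixed small enough that the surplus exponent $d_1+\cdots+d_n-s$ absorbs every depth-independent multiplicative loss. The only imprecision is your claim that Lemma \ref{perturbacion} applies with $\tau=o(1)$: the error coming from Lemma \ref{ren:atr} can indeed be made $o(\rho)$ by taking the root words long, but the switch from $(\tb^1,\ldots,\tb^n,s)$ to the nearby $(\tilde{\tb}^1,\ldots,\tilde{\tb}^n,s')$ at which the Marstrand count is controlled contributes an error of order $\rho$ (the paper's estimate (\ref{eq:lippi})), so $\tau$ must be a fixed constant rather than $o(1)$ --- which costs only one more absorbed constant factor and does not affect the argument.
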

\begin{proof}
Since $h$ is Lipschitz in a neighborhood of $K_1\times ...\times K_n$ and $HD(K_1\times ...\times K_n)=d_1+...+d_n$ we have 
\[HD(h(K_1\times ...\times K_n))\leq d_1+...+d_n.\]
Thus we only need to show $HD(h(K_1\times ...\times K_n))\geq d_1+...+d_n$. Let $\eta>0$ arbitrary we will prove that 
\[HD(h(K_1\times ...\times K_n))\geq d_1+...+d_n-\eta,\]
this will finish the proof of the theorem.

Since $\Sigma_1^-\times...\times \Sigma_n^-\times J_r$ is compact and $\pi_{\tb^1,...,\tb^n,s}$ depends continuously in $(\tb^1,...,\tb^n,s)$ then we can choose a constant $L>0$ (only depending on $r$, $K_1$,..., $K_n$ and $Dh(x_0)$) such that
\begin{equation}
B_{L^{-1}\rho}(c_Q)\subset Dh(x_0)\circ \pi_{\tb^1,...,\tb^n,s}(Q) \subset B_{L\rho}(c_Q),\forall Q\in R(\rho), \label{eq:open}
\end{equation}
for some $c_Q\in \R^l$, which depends on $\tb^1,...,\tb^n,s$ and $Q$.

Choose $\tau>0$ big enough such that 
\begin{equation}\label{eq:lippi}
\|Dh(x_0)\circ \pi_{\tilde{\tb}^1,...,\tilde{\tb}^n,\tilde{s}}-Dh(x_0)\circ \pi_{\tb^1,...,\tb^n,s}\|\leq \frac{1}{3}\tau \rho,
\end{equation}
for all $\tb^j,\tilde{\tb}^j\in \Sigma_j^-$, $j=1,...,n$, $s,\tilde{s}\in J_r$ with $|s-\tilde{s}|<c_2\rho$, $\tb^j\wedge \tilde{\tb}^j\in \Sigma_j(c_0,\rho)$, $j=1,...,n$.

Until now, all the statements where $\rho$ appeared were true for any value small enough. For the rest of the proof we are going to fix a particular value, which we call $\rho_1$ to distinguish it from the "variable" $\rho$. It is choosen such that
\[\rho_1^{\eta}\leq C_3[3L(L+\tau)]^{-l}\frac{c_3^2}{4a},\]
where $C_3>0$ is a constant, independent of $\rho$, that will be fixed later (Eq. (\ref{C3})).

We can choose $\ul{a}_0^1$,..., $\ul{a}_0^n$ and $\tb_0^1,...,\tb_0^n$ ending in it, respectively, with the following properties:
\begin{itemize}
\item[a.] The element
\[s_0=(\log \frac{r_{\ul{a}_0^1}}{r_{\ul{a}_0^n}},...,\log \frac{r_{\ul{a}_0^{n-1}}}{r_{\ul{a}_0^n}},v_{\ul{a}_0^1},...,v_{\ul{a}_0^n})\]
is in $J_r$.
\item[b.] $x_{0}\in G(\ul{a}_0^1)\times ... \times G(\ul{a}_0^n)$.
\item[c.] For any $\ul{b}^1,...,\ul{b}^n$, consider
\[\tilde{s}=(\log \frac{r_{\ul{a}_0^1\ul{b}^1}}{r_{\ul{a}_0^n\ul{b}^n}},...,\log \frac{r_{\ul{a}_0^{n-1}\ul{b}^{n-1}}}{r_{\ul{a}_0^n\ul{b}^n}},v_{\ul{a}_0^1\ul{b}^1},...,v_{\ul{a}_0^n\ul{b}^n}).\]
Then
\[\| Dh(c_{\ul{a}_0^1\ul{b}^1},...,c_{\ul{a}_0^n\ul{b}^n})\circ \pi_{\tb_0^1\ul{b}^1,...,\tb_0^n\ul{b}^n,\tilde{s}} - Dh(x_0) \circ \pi_{T_{\ul{b}^1,...,\ul{b}^n}(\tb_0^1,...,\tb_0^n,s)}\| \leq \frac{1}{3}\tau \rho_1.\]
This is achieved by choosing very long words for $\ul{a}_0^1$,..., $\ul{a}_0^n$. If the number of symbols in $\ul{a}_0^j$, for $1\leq j\leq n$, increases, then $|x_0-(c_{\ul{a}_0^1\ul{b}^1},...,c_{\ul{a}_0^n\ul{b}^n})|$ goes to zero, the same happens for the distance between the last coordinate of $T_{\ul{b}^1,...,\ul{b}^n}(\tb_0^1,...,\tb_0^n,s)$ and $\tilde{s}$. This a consequence of item b. and the fact that $r_{\ul{a}_0^j}r^{\tb_0^j}_{\ul{b}^j}/r_{\ul{a}_0^j\ul{b}^j} \to 1$, $\|v_{\ul{a}_0^j\ul{b}^j}-(v_{\ul{a}_0^j}+v^{\tb_0^j}_{\ul{b}^j})\|\to 0$ as the number of symbols in $\ul{a}_0^j$ goes to infinity.

\item[d.] For any $\ul{b}^1,...,\ul{b}^n$ such that $\tilde{s}\in J_r$, there is an affine function $L$ such that
\[\|L\circ h \circ (f_{\ul{a}_0^1\ul{b}^1},...,f_{\ul{a}_0^n\ul{b}^n})-Dh(c_{\ul{a}_0^1\ul{b}^1},...,c_{\ul{a}_0^n\ul{b}^n})\circ \pi_{\tb_0^1\ul{b}^1,...,\tb_0^n\ul{b}^n,\tilde{s}}\|\leq \frac{1}{3}\tau \rho_1.\]
This is a consequence of lemma \ref{ren:atr}.
\end{itemize}

By hypothesis there exists $(\ul{c}^1,...,\ul{c}^n)\in \Sigma_1^{fin}\times ...\times \Sigma_n^{fin}$ such that $T_{\ul{c}^1,...,\ul{c}^n}(\tb_0^1,...,\tb_0^n,s_0)=(\tb_0^1\ul{c}^1,...,\tb_0^n\ul{c}^n,\tilde{s}_0)$ and $\tilde{s}_0\in F^*(\tilde{\ul{a}}_0^1,...,\tilde{\ul{a}}_0^n)$ for some $(\tilde{\ul{a}}_0^1,...,\tilde{\ul{a}}_0^n)$ for which $(\tb_0^1\ul{c}^1,...,\tb_0^n\ul{c}^n)$ ends in it.

We will define inductively a set 
\[N\subset \Sigma_1^{fin}\times ...\times \Sigma_n^{fin}\times \Sigma_1^-\times ....\times \Sigma_n^-\times J_r.\]
Every $p=(\ul{a}^1,...,\ul{a}^n,\tb^1,...,\tb^n,s)\in N$ should verify:
\begin{itemize}
\item[(i)] $s\in F^*(\tilde{\ul{a}}^1,...,\tilde{\ul{a}}^n)$ for some $(\tilde{\ul{a}}^1,...,\tilde{\ul{a}}^n)$ such that $(\tb^1,...,\tb^n)$ ends in $(\tilde{\ul{a}}^1,...,\tilde{\ul{a}}^n)$.
\item[(ii)] $(\ul{a}^1,...,\ul{a}^n,\tb^1,...,\tb^n,s)=(\ul{a}_0^1\ul{b}^1,...,\ul{a}_0^n\ul{b}^n,T_{\ul{b}^1,...,\ul{b}^n}(\tb_0^1,...,\tb_0^n,s_0))$ for some $(\ul{b}^1,...,\ul{b}^n)\in \Sigma_1^{fin}\times ...\times \Sigma_n^{fin}$.
\end{itemize}
For $p=(\ul{a}^1,...,\ul{a}^n,\tb^1,...,\tb^n,s)\in N$ we will define a set $T'(p)\subset R(\rho_1)$ verifying:
\begin{itemize}
\item[(iii)] $\#T'(p)\geq C_3^{-1}\rho_1^{\eta-(d_1+...+d_n)}$.
\item[(iv)] $h\circ (f_{\ul{a}^1},...,f_{\ul{a}^n})(Q_1)\cap h\circ (f_{\ul{a}^1},...,f_{\ul{a}^n})(Q_2)=\emptyset$, for all $Q_1,Q_2\in T'(p)$, $Q_1\neq Q_2$.
\item[(v)] For all $(\ul{b}^1,...,\ul{b}^n)\in T'(p)$ we have \[T_{\ul{b}^1,...,\ul{b}^n}(\tb^1,...,\tb^n,s)=(\tb^1\ul{b}^1,...,\tb^n\ul{b}^n,\tilde{s})\]
and $\tilde{s}\in F^*(\ul{b}^1,...,\ul{b}^n)$.
\end{itemize}
Elements of $N$ are defined inductively, i.e. every element already defined $p\in N$ generates new elements, which we call the children of $p$ and denote by $Ch(p)$. Thus, $N$ has the structure of a rooted tree. The root of the tree is $p_0=(\ul{a}_0^1\ul{c}^1,...,\ul{a}_0^n\ul{c}^n,\tb_0^1\ul{c}^1,...,\tb_0^n\ul{c}^n,\tilde{s}_0)$, the set $T'(p_0)$ is defined as described below.

Given $p=(\ul{a}^1,...,\ul{a}^n,\tb^1,...,\tb^n,s)$ verifying (i), (ii) (as $p_0$ does) define $T'(p)$ in the following way:

By (i) we know that $s\in F^*(\tilde{\ul{a}}^1,...,\tilde{\ul{a}}^n)$, hence the scale recurrence lemma implies that there exists a set $F\subset R(\rho_1)$ with $\#F\geq c_3\rho_1^{-(d_1+...+d_n)}$ and such that (v) holds for $F$.

Since $F^*(\tilde{\ul{a}}^1,...,\tilde{\ul{a}}^n)\subset V_{c_2\rho_1}(F(\tilde{\ul{a}}^1,...,\tilde{\ul{a}}^n))$, then there exist $s'\in F(\tilde{\ul{a}}^1,...,\tilde{\ul{a}}^n)$ with $|s-s'|\leq c_2\rho_1$. By the definition of $F(\tilde{\ul{a}}^1,...,\tilde{\ul{a}}^n)$ we have $N_{\rho_1}(\tilde{\tb}^1,...,\tilde{\tb}^n,s')\leq a\rho_1^{-(d_1+...+d_n)}$ for some $(\tilde{\tb}^1,...,\tilde{\tb}^n)$ that ends in $(\tilde{\ul{a}}^1,...,\tilde{\ul{a}}^n)$.

Using prop. \ref{prop:marstrand} we obtain a set $T\subset F$ such that
\[Dh(x_0)\circ \pi_{\tilde{\tb}^1,...,\tilde{\tb}^n,s'}(Q_1) \cap  Dh(x_0)\circ \pi_{\tilde{\tb}^1,...,\tilde{\tb}^n,s'}(Q_2)= \emptyset,\,\, \forall Q_1,Q_2\in T,\,\, Q_1\neq Q_2,\]
and $\#T\geq (c_3^2/(4a))\rho_1^{-(d_1+...+d_n)}$.

We want to use lemma \ref{perturbacion} for $\phi=Dh(x_0)\circ \pi_{\tilde{\tb}^1,...,\tilde{\tb}^n,s'}$, $\psi=L\circ h\circ (f_{\ul{a}^1},...,f_{\ul{a}^n})$, where $L$ is some affine function, and the set $T$. Note first that $\tb^j\wedge \tilde{\tb}^j\in\Sigma_j(c_0,\rho_1)$, $1\leq j \leq n$, since both $\tb^j$ and $\tilde{\tb}^j$ end in $\tilde{\ul{a}}^j$. Equation (\ref{eq:lippi}) implies then
\[\|Dh(x_0)\circ \pi_{\tilde{\tb}^1,...,\tilde{\tb}^n,s'}-Dh(x_0)\circ \pi_{\tb^1,...,\tb^n,s}\|\leq \frac{1}{3}\tau \rho_1.\]
On the other hand, item (ii) together with items c. and d. implies
\[\|L\circ h\circ (f_{\ul{a}^1},...,f_{\ul{a}^n})-Dh(x_0)\circ \pi_{\tb^1,...,\tb^n,s}\|\leq \frac{2}{3}\tau \rho_1.\]
We conclude
\[\|L\circ h\circ (f_{\ul{a}^1},...,f_{\ul{a}^n})-Dh(x_0)\circ \pi_{\tilde{\tb}^1,...,\tilde{\tb}^n,s'}\|\leq \tau \rho_1,\]
this together with Eq. (\ref{eq:open}) shows that we can use lemma \ref{perturbacion}. Hence, there is a subset $T'(p)\subset T\subset F$ such that
\[ h\circ (f_{\ul{a}^1},...,f_{\ul{a}^n})(Q_1)\cap h\circ (f_{\ul{a}^1},...,f_{\ul{a}^n})(Q_1) = \emptyset,\,\, \forall Q_1,Q_2 \in T'(p),\,\,Q_1\neq Q_2,\]
and
\[\#T'(p)\geq [3L(L+\tau)]^{-l}\cdot\frac{c_3^2}{4a}\rho_1^{-(d_1+...+d_n)}\geq C_3^{-1}\rho_1^{\eta-(d_1+...+d_n)}.\]
In the way we have defined $T'(p)$ it clearly verifies (iii), (iv), (v).

Given $p=(\ul{a}^1,...,\ul{a}^n,\tb^1,...,\tb^n,s)\in N$, the children of $p$ are defined by
\[Ch(p)=\{(\ul{a}^1\ul{b}^1,...,\ul{a}^n\ul{b}^n,T_{\ul{b}^1,...,\ul{b}^n}(\tb^1,...,\tb^n,s)): (\ul{b}^1,...,\ul{b}^n)\in T'(p)\}.\]
The children of $p$ clearly satisfy (i), (ii).

Now that we have defined $N$ we can finish the proof. For each non-negative integer $k$, consider the set $I(k)$ of elements $p\in N$ generated in the $k$-step of the inductive process.\footnote{They are children of elements generated in the $(k-1)$-step, and the only element in the 0-step is $p_0$.} For each $p=(\ul{a}^1,...,\ul{a}^n,\tb^1,...,\tb^n,s)\in N$ define the set
\[G(p)=h(G(\ul{a}^1)\times...\times G(\ul{a}^n)) \subset \R^l.\]
We clearly have
\[\bigcap_{k\geq 0}\bigcup_{p\in I(k)}G(p)\subset h(K_1\times...\times K_n).\]
The desired result, $HD(h(K_1\times... \times K_n))\geq d_1+...+d_n-\eta$, follows from corollary \ref{distribucion} if we can prove that
\[\sum_{q\in Ch(p)} \left(\frac{diam(G(q))}{diam (G(p))}\right)^{d_1+...+d_n-\eta}\geq 1,\]
and each set $G(p)$ contains a ball with radius proportional to its diameter. All other requirements in the corollary are obviously verified.

Given $p=(\ul{a}^1,...,\ul{a}^n,\tb^1,...,\tb^n,s)\in N$, property (i), (ii) and the observation in c. imply that $s\in J_r$ and
\[s(\ul{a}^1,...,\ul{a}^n)=(\log \frac{r_{\ul{a}^1}}{r_{\ul{a}^n}},...,\log \frac{r_{\ul{a}^{n-1}}}{r_{\ul{a}^n}},v_{\ul{a}^1},...,v_{\ul{a}^n})\]
is very close to $s$. Thus, we can assume $s(\ul{a}^1,...,\ul{a}^n)\in J_{2r}$ and then we can think of $G(\ul{a}^1)\times...\times G(\ul{a}^n)$ as being a square, in fact we have $diam(G(\ul{a}^j))\leq e^{4r} diam(G(\ul{a}^m))$, for any $j, m$. This together with the fact that $Dh(x_0)$ verifies the transversality hypotheses allow us to conclude that $h(G(\ul{a}^1)\times...\times G(\ul{a}^n))$ contains a ball with radius proportional to its diameter. Moreover,
\begin{align*}
(C_3')^{-1} diam(G(\ul{a}^1)\times...\times G(\ul{a}^n)) &\leq diam(h(G(\ul{a}^1)\times...\times G(\ul{a}^n)))\\
&\leq C_3' diam(G(\ul{a}^1)\times...\times G(\ul{a}^n))
\end{align*}
for a constant $C_3'>0$, depending only on $r$, $h$ and the Cantor sets. On the other hand, we can choose $C_4'>0$, independent of $\rho$, such that
\[diam(G(\ul{a}\ul{b}))\geq C'_4 \rho \cdot diam (G(\ul{a})),\]
for any $\ul{a}\in \Sigma_j^{fin}$ and $\ul{b}\in \Sigma_j(c_0,\rho)$, $0 \leq j\leq n$. Therefore, we can choose $C_3>0$, that does not depend on $\rho_1$, such that
\begin{equation}\label{C3}
\left(\frac{diam(G(q))}{diam (G(p))}\right)^{d_1+...+d_n-\eta}\geq C_3 \rho_1^{d_1+...+d_n-\eta},
\end{equation}
for any $q\in Ch(p)$. Now that $C_3$ has been chosen we get
\[\sum_{q\in Ch(p)} \left(\frac{diam(G(q))}{diam (G(p))}\right)^{d_1+...+d_n-\eta}\geq \sum_{q\in Ch(p)} C_3 \rho_1^{d_1+...+d_n-\eta}= C_3 \rho_1^{d_1+...+d_n-\eta}\cdot \#T'(p)\geq 1.\]
\end{proof}

\begin{dmf}\label{thm:dimf}
Let $K_1$,..., $K_n$ be $C^m$, $m\geq 2$, conformal regular Cantor sets generated by expanding maps $g_1$,...,$g_n$, respectively. Suppose all of them are not essentially affine. Assume that there exist periodic points $p_j \in K_j$, with period $n_j$, for $1\leq j\leq n$, such that if we write
\[Dg_j^{n_j}(p_j)=\frac{1}{r_j} R_{-v_j},\]
where $R_v$ is the rotation matrix by an angle $v\in \T$, then
\begin{align*}
(\log r_1,0,..,0&;v_1,0,..,0),\\
&\vdots \\
(0,...,\log r_{n-1}&;0,...,v_{n-1},0),\\
(-\log r_n,...,-\log r_n &;0,...,0,v_n),
\end{align*}
generate a dense subgroup of $J$. Let $h$ be any $C^{1}$ function defined on a neighborhood of $K_1\times...\times K_n$ into $\R^l$ such that there exists a point $x_0\in K_1\times ...\times K_n$ where $Dh(x_0)$ verifies the transversality hypotheses, then
\[HD(h(K_1\times...\times K_n))=\min \{l,HD(K_1)+...+HD(K_n)\}.\]
\end{dmf}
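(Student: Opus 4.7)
The plan is to derive Theorem B from Theorem \ref{thm:qfdim}. The upper bound $HD(h(K_1\times\cdots\times K_n))\leq\min\{l,d_1+\cdots+d_n\}$ is immediate: $h$ is Lipschitz on a compact neighborhood of $K_1\times\cdots\times K_n$, so the image has Hausdorff dimension at most $HD(K_1\times\cdots\times K_n)=d_1+\cdots+d_n$, and the image lies in $\R^l$. For the matching lower bound I would split into two cases according to whether $d_1+\cdots+d_n<l$ or $d_1+\cdots+d_n\geq l$.

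\textbf{Case 1:} $d_1+\cdots+d_n<l$. Here I would invoke Theorem \ref{thm:qfdim} directly. All of its hypotheses are either standing assumptions or follow from the hypotheses of Theorem B, except for the recurrence condition: for every $(\tb^1,\ldots,\tb^n,s)\in\Sigma_1^-\times\cdots\times\Sigma_n^-\times J_r$ one must produce $(\ul{c}^1,\ldots,\ul{c}^n)$ such that $T_{\ul{c}^1,\ldots,\ul{c}^n}(\tb^1,\ldots,\tb^n,s)=(\tb^1\ul{c}^1,\ldots,\tb^n\ul{c}^n,\tilde s)$ with $\tilde s\in F^*(\tilde{\ul{a}}^1,\ldots,\tilde{\ul{a}}^n)$ for some good tuple in which $(\tb^1\ul{c}^1,\ldots,\tb^n\ul{c}^n)$ ends. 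I would verify this by combining property (iii) of the scale recurrence lemma (at least half of the endpoint tuples satisfy $\nu(F^*)\geq\nu(J_r)/2$) with the density hypothesis. Concretely, if $\ul{c}^j$ is chosen to be a large concatenation of the period-$n_j$ orbit word at $p_j$ (say $k_j$ copies), then the chain rule and the limit-geometry estimates give that $(\log r^{\tb^j}_{\ul{c}^j},\,v^{\tb^j}_{\ul{c}^j})$ equals $k_j(\log r_j,\,v_j)$ up to a bounded error uniform in $\tb^j$. Therefore the shifts in $J$ produced by renormalization are dense in the closed subgroup generated by the vectors listed in the statement, which by hypothesis is all of $J$. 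Combined with the measure bound on good endpoints, this lets one steer $s$ inside some $F^*(\tilde{\ul{a}}^1,\ldots,\tilde{\ul{a}}^n)$.

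\textbf{Case 2:} $d_1+\cdots+d_n\geq l$. Fix $\eta>0$ small. For each $j$ I would construct a sub-regular Cantor set $\tilde K_j\subset K_j$ by selecting a sub-subshift for a suitable iterate $g_j^{N_j}$ that still contains the orbit through $p_j$ (so that the corresponding periodic-point data for $\tilde K_j$ is an integer multiple of $(\log r_j,v_j)$) and still contains a witness for non-essential affineness (so $\tilde K_j$ inherits that property, and similarly not being essentially real). By a standard pressure argument one can arrange $HD(\tilde K_j)\in(d_j-\eta/n,d_j)$ with $\sum_j HD(\tilde K_j)\in(l-\eta,l)$. The density hypothesis is inherited because a finite-index subgroup of a dense subgroup of the connected Lie group $J$ is still dense (a short Baire-category argument). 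Case 1 applied to the $\tilde K_j$ yields
\[HD(h(K_1\times\cdots\times K_n))\geq HD(h(\tilde K_1\times\cdots\times\tilde K_n))=HD(\tilde K_1)+\cdots+HD(\tilde K_n)>l-\eta,\]
and sending $\eta\to 0$ gives $HD(h(K_1\times\cdots\times K_n))\geq l$, closing the argument.

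The main obstacle I foresee is the careful implementation of Case 1: starting from an arbitrary $(\tb^1,\ldots,\tb^n,s)$ one must construct a single finite concatenation whose endpoint tuple is simultaneously (i) good in the sense of the scale recurrence lemma and (ii) receives a shifted $\tilde s$ lying in that tuple's $F^*$-set. Because the shifts $v^{\tb^j}_{\ul{c}^j}$ and $\log r^{\tb^j}_{\ul{c}^j}$ depend on $\tb^j$ and only approach integer multiples of the generator vectors up to a $\tb^j$-dependent error, the density statement must be made uniform in $\tb^j$; this uniformity is what the exponential convergence of the limit geometries is used for. A secondary technical point is the sub-Cantor set construction in Case 2, where preservation of the not-essentially-affine and not-essentially-real properties requires that the chosen sub-subshift contains the relevant witnesses, which has to be arranged compatibly with the pressure estimate controlling $HD(\tilde K_j)$.
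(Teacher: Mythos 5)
Your outline reproduces the paper's argument in both cases: the upper bound by Lipschitz continuity, Case 1 by verifying the recurrence hypothesis of Theorem \ref{thm:qfdim} using long blocks of the periodic words together with properties (ii)--(iii) of the scale recurrence lemma, and Case 2 by passing to sub-Cantor sets of total dimension just below $l$ (the paper invokes the lemma on p.\ 16 of \cite{M} for that construction). Your remark that the density hypothesis survives the passage to powers of $g_j$ because a finite-index subgroup of a dense subgroup of the connected group $J$ is still dense is a clean substitute for the paper's explicit Kronecker computation in part a.\ of Lemma \ref{lem:genhyp}.

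The one genuine gap is in Case 1, at the sentence ``the shifts in $J$ produced by renormalization are dense in the closed subgroup generated by the vectors listed in the statement.'' The shifts you can actually realize are (up to the uniform error you discuss) the combinations $\sum_j m_j w_j$ with $m_j\in\N$, since $m_j$ counts repetitions of a word; they fill out only the \emph{semigroup} generated by the listed vectors, and density of the generated group does not in general imply density of the generated semigroup (in $\R$, the pair $\{1,\sqrt 2\}$ generates a dense group, but its $\N$-span lies in $[0,\infty)$). The implication does hold here, but only because of the sign structure of the generators: $\log r_j<0$ for $1\leq j\leq n-1$, while the last generator has $-\log r_n>0$ in every real coordinate, so non-negative combinations can move in both directions along each $\R$-factor; this is precisely part b.\ of Lemma \ref{lem:genhyp}, which your proposal neither states nor proves, and without which the steering step is unjustified. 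The uniformity issue you flag at the end is real but is resolved exactly as you guess: one first appends a long block $\tilde{\ul{a}}_j^{l_j}$ so that the shifted limit geometry is close to the purely periodic one $\tb_j$, after which the increments produced by further repetitions are within a fraction of $\rho$ of exact integer multiples of the generator vectors, uniformly in the original $\tb^j$.
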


\begin{proof}
We first treat the case $HD(K_1)+...+HD(K_n)<l$. Notice that $K_1,...,K_n$ verify the hypotheses of the scale recurrence lemma, the existence of the periodic points $p_j$ imply that all of the Cantor sets are not essentially real. The desired result follows from the preceding theorem if we show that for any $(\tb^1,...,\tb^n,s)\in \Sigma_1^-\times...\times \Sigma_n^-\times J_r$ there exists $(\ul{c}^1,...,\ul{c}^n)\in \Sigma_1^{fin}\times ...\times \Sigma_n^{fin}$ such that $T_{\ul{c}^1,...,\ul{c}^n}(\tb^1,...,\tb^n,s)=(\tb^1\ul{c}^1,...,\tb^n\ul{c}^n,\tilde{s})$ and $\tilde{s}\in F^*(\tilde{\ul{a}}^1,...,\tilde{\ul{a}}^n)$ for some $(\tilde{\ul{a}}^1,...,\tilde{\ul{a}}^n)$ for which $(\tb^1\ul{c}^1,...,\tb^n\ul{c}^n)$ ends in it.

Let $\ul{a}_j \in \Sigma_j^{fin}$ be the word of length $n_j$ such that the periodic point $p_j$ corresponds to the sequence $\ul{a}_j \ul{a}_j \ul{a}_j...$. For a finite sequence $\ul{a}\in \Sigma_j^{fin}$ and $k\in\mathbb{Z}^+$ we are going to use the notation
\[\ul{a}^k=\underbrace{\ul{a} \ul{a}...\ul{a}}_{k-times}.\]
Choosing $c_0$ big enough and assuming $\rho$ is small, we can find $k_j\in \mathbb{Z}^+$ such that
\[\tilde{\ul{a}}_j:=\ul{a}_j^{k_j} \in \Sigma_j(c_0,\rho).\]
Define 
\[\tb_j=...\tilde{\ul{a}}_j...\tilde{\ul{a}}_j\in \Sigma_j^-.\]
By property (ii) and (iii) of the scale recurrence lemma we know that there are $s_0, s_1 \in J$, $\ul{b}_j\in \Sigma_j(c_0,\rho)$ and $\ul{c}_j \in \Sigma^{fin}_j$, $1\leq j \leq n$, such that
\[T_{\ul{c}_1\ul{b}_1,...,\ul{c}_n\ul{b}_n}(\tb_1,...,\tb_n,s_0)=(\tb_1\ul{c}_1\ul{b}_1,...,\tb_n\ul{c}_n\ul{b}_n,s_1),\]
and the $\rho$-neighborhood of $s_1$ is contained in $F^*(\ul{b}_1,...,\ul{b}_n)$.

Thanks to the continuity of the map $\tb\to k^{\tb}$ we can choose positive integers $l_1,...,l_n$, depending on $\rho$, such that for any $m_j>l_j$, any $\tb^j \in \Sigma_j^-$, $1\leq j\leq n$, and $x$ in the $\rho/2$-neighborhood of $s_0$ we have\footnote{We assume $\tb^j$, $1\leq j \leq n$, ends with the letters in which $\ul{a}_j$, $1\leq j \leq n$, starts, otherwise we consider $\ul{d}^j\tilde{\ul{a}}_j^m$ instead of $\tilde{\ul{a}}_j^m$, for some $\ul{d}^j$, and the proof follows in the same way.}
\[T_{\ul{c}_1\ul{b}_1,...,\ul{c}_n\ul{b}_n}(\tb^1\tilde{\ul{a}}_1^{m_1},...,\tb^n\tilde{\ul{a}}_n^{m_n},x)=(\tb^1\tilde{\ul{a}}_1^{m_1}\ul{c}_1\ul{b}_1,...,\tb^n\tilde{\ul{a}}_n^{m_n}\ul{c}_n\ul{b}_n,\tilde{x})\]
and $\tilde{x} \in F^*(\ul{b}_1,...,\ul{b}_n)$.

Now notice that
\[r^{\tb^j}_{\tilde{\ul{a}}^{l_j+m_j}}=r^{\tb^j}_{\tilde{\ul{a}}^{l_j}}\cdot r^{\tb^j\tilde{\ul{a}}^{l_j}}_{\tilde{\ul{a}}^{m_j}},\]
and if $l_j$ is big enough we have 
\[|\log r^{\tb^j\tilde{\ul{a}}_j^{l_j}}_{\tilde{\ul{a}}_j^{m_j}}-\log r^{\tb_j}_{\tilde{\ul{a}}_j^{m_j}}|<\frac{\rho}{8(2n-1)},\]
for any $m_j\in \mathbb{Z}^+$. Similar formulas hold for $v^{\tb^j\tilde{\ul{a}}_j^{l_j}}_{\tilde{\ul{a}}_j^{m_j}}$.\\
For $m_j\in \mathbb{Z}^+$, $1\leq j\leq n$, consider
\[T_{\tilde{\ul{a}}_1^{l_1+m_1},...,\tilde{\ul{a}}_j^{l_j+m_j}}(\tb^1,...,\tb^n,s)=(\tilde{\tb}^1,...,\tilde{\tb}^n,\tilde{s}).\]
We have
\begin{align*}
\tilde{s}&=s+(\log r_{\tilde{\ul{a}}_1^{l_1+m_1}}^{\tb^1}-\log r_{\tilde{\ul{a}}_n^{l_n+m_n}}^{\tb^n},..., \log r_{\tilde{\ul{a}}_{n-1}^{l_{n-1}+m_{n-1}}}^{\tb^{n-1}}-\log r_{\tilde{\ul{a}}_n^{l_n+m_n}}^{\tb^n};v_{\tilde{\ul{a}}_1^{l_1+m_1}}^{\tb^1},...,v_{\tilde{\ul{a}}_n^{l_n+m_n}}^{\tb^n})\\
&=s+(\log r_{\tilde{\ul{a}}_1^{l_1}}^{\tb^1}-\log r_{\tilde{\ul{a}}_n^{l_n}}^{\tb^n},..., \log r_{\tilde{\ul{a}}_{n-1}^{l_{n-1}}}^{\tb^{n-1}}-\log r_{\tilde{\ul{a}}_n^{l_n}}^{\tb^n};v_{\tilde{\ul{a}}_1^{l_1}}^{\tb^1},...,v_{\tilde{\ul{a}}_n^{l_n}}^{\tb^n})\\
&\hspace{0.7cm}+(\log r_{\tilde{\ul{a}}_1^{m_1}}^{\tb^1\tilde{\ul{a}}_1^{l_1}}-\log r_{\tilde{\ul{a}}_n^{m_n}}^{\tb^n\tilde{\ul{a}}_n^{l_n}},..., \log r_{\tilde{\ul{a}}_{n-1}^{m_{n-1}}}^{\tb^{n-1}\tilde{\ul{a}}_{n-1}^{l_{n-1}}}-\log r_{\tilde{\ul{a}}_n^{m_n}}^{\tb^n\tilde{\ul{a}}_n^{l_n}};v_{\tilde{\ul{a}}_1^{m_1}}^{\tb^1\tilde{\ul{a}}_1^{l_1}},...,v_{\tilde{\ul{a}}_n^{m_n}}^{\tb^n\tilde{\ul{a}}_n^{l_n}}).
\end{align*}
Now notice that
\[(\log r_{\tilde{\ul{a}}_1^{m_1}}^{\tb^1\tilde{\ul{a}}_1^{l_1}}-\log r_{\tilde{\ul{a}}_n^{m_n}}^{\tb^n\tilde{\ul{a}}_n^{l_n}},..., \log r_{\tilde{\ul{a}}_{n-1}^{m_{n-1}}}^{\tb^{n-1}\tilde{\ul{a}}_{n-1}^{l_{n-1}}}-\log r_{\tilde{\ul{a}}_n^{m_n}}^{\tb^n\tilde{\ul{a}}_n^{l_n}};v_{\tilde{\ul{a}}_1^{m_1}}^{\tb^1\tilde{\ul{a}}_1^{l_1}},...,v_{\tilde{\ul{a}}_n^{m_n}}^{\tb^n\tilde{\ul{a}}_n^{l_n}})\]
is in the $\rho/4$ neighborhood of
\[(\log r_{\tilde{\ul{a}}_1^{m_1}}^{\tb_1}-\log r_{\tilde{\ul{a}}_n^{m_n}}^{\tb_n},..., \log r_{\tilde{\ul{a}}_{n-1}^{m_{n-1}}}^{\tb_{n-1}}-\log r_{\tilde{\ul{a}}_n^{m_n}}^{\tb_n};v_{\tilde{\ul{a}}_1^{m_1}}^{\tb_1},...,v_{\tilde{\ul{a}}_n^{m_n}}^{\tb_n}).\]
On the other hand
\begin{align*}
    (\log r_{\tilde{\ul{a}}_1^{m_1}}^{\tb_1}-\log r_{\tilde{\ul{a}}_n^{m_n}}^{\tb_n},..., \log r_{\tilde{\ul{a}}_{n-1}^{m_{n-1}}}^{\tb_{n-1}}-&\log r_{\tilde{\ul{a}}_n^{m_n}}^{\tb_n};v_{\tilde{\ul{a}}_1^{m_1}}^{\tb_1},...,v_{\tilde{\ul{a}}_n^{m_n}}^{\tb_n})=\\
    m_1 (\log r_{\tilde{\ul{a}}_1}^{\tb_1},0,..,0;v_{\tilde{\ul{a}}_1}^{\tb_1},0,..,0)+...&+ m_{n-1}(0,...,\log r_{\tilde{\ul{a}}_{n-1}}^{\tb_{n-1}};0,...,v_{\tilde{\ul{a}}_{n-1}}^{\tb_{n-1}},0)\\&+m_n(-\log r_{\tilde{\ul{a}}_{n}}^{\tb_{n}},...,-\log r_{\tilde{\ul{a}}_{n}}^{\tb_{n}};0,...,0,v_{\tilde{\ul{a}}_{n}}^{\tb_{n}}).
\end{align*}
Moreover
\[r_{\tilde{\ul{a}}_j}^{\tb_j} R_{v_{\tilde{\ul{a}}_j}^{\tb_j}}=[Dg^{k_jn_j}(p_j)]^{-1}=r_j^{k_j}R_{k_jv_j}.\]
Therefore, the density hypothesis in the theorem, together with the next lemma, implies that there exists $m_1,...,m_n \in \Z^+$ such that
\[(\log r_{\tilde{\ul{a}}_1^{m_1}}^{\tb_1}-\log r_{\tilde{\ul{a}}_n^{m_n}}^{\tb_n},..., \log r_{\tilde{\ul{a}}_{n-1}^{m_{n-1}}}^{\tb_{n-1}}-\log r_{\tilde{\ul{a}}_n^{m_n}}^{\tb_n};v_{\tilde{\ul{a}}_1^{m_1}}^{\tb_1},...,v_{\tilde{\ul{a}}_n^{m_n}}^{\tb_n})\]
is in the $\rho/4$-neighborhood of 
\[s_0-s-(\log r_{\tilde{\ul{a}}_1^{l_1}}^{\tb^1}-\log r_{\tilde{\ul{a}}_n^{l_n}}^{\tb^n},..., \log r_{\tilde{\ul{a}}_{n-1}^{l_{n-1}}}^{\tb^{n-1}}-\log r_{\tilde{\ul{a}}_n^{l_n}}^{\tb^n};v_{\tilde{\ul{a}}_1^{l_1}}^{\tb^1},...,v_{\tilde{\ul{a}}_n^{l_n}}^{\tb^n}).\]
Hence $\tilde{s}$ is in the $\rho/2$-neighborhood of $s_0$ and from this we get
\begin{align*}
T_{\tilde{\ul{a}}_1^{l_1+m_1}\ul{c}_1\ul{b}_1,...,\tilde{\ul{a}}_n^{l_n+m_n}\ul{c}_n\ul{b}_n}(\tb^1,...,\tb^n,s)&=
T_{\ul{c}_1\ul{b}_1,...,\ul{c}_n\ul{b}_n}(\tb^1\tilde{\ul{a}}_1^{l_1+m_1},...,\tb^n\tilde{\ul{a}}_n^{l_n+m_n},\tilde{s})\\
&=(\tb^1\tilde{\ul{a}}_1^{l_1+m_1}\ul{c}_1\ul{b}_1,...,\tb^n\tilde{\ul{a}}_n^{l_n+m_n}\ul{c}_n\ul{b}_n,\tilde{s}'),
\end{align*}
and $\tilde{s}'\in F^*(\ul{b}_1,...,\ul{b}_n)$, as we wanted.

If $HD(K_1)+...+HD(K_n)\geq l$, fix $\epsilon>0$ and find conformal regular Cantor sets $\tilde{K}_j\subset K_j$, $1\leq j\leq n$ such that $l-\epsilon<HD(\tilde{K}_1)+...+HD(\tilde{K}_n)<l$, $p_j\in \tilde{K}_j$, and the expanding map of $\tilde{K}_j$ is given by a power of $g_j$, $1\leq j \leq n$ (see \cite{M}, lemma in page 16). We get
\begin{align*}
l\geq HD(h(K_1\times ...\times K_n))&\geq HD(h(\tilde{K}_1 \times...\times \tilde{K}_n))\\
&=HD(\tilde{K}_1)+...+HD(\tilde{K}_n)>l-\epsilon.
\end{align*}
Since $\epsilon$ can be arbitrarily small we obtain $HD(h(K\times ...\times K_n))=l$ as we wanted.
\end{proof}

The following lemma was used in the previous theorem. It also implies that the hypothesis needed for the dimension formula is generic. Its proof relies in the well known Kronecker's theorem.

\begin{lem}\label{lem:genhyp}
Let $\lambda_j<0$, $v_j\in \T$, $1\leq j\leq n$, and consider the set $E(\lambda_1,...,\lambda_n,v_1,...,v_n)\subset \R^{n-1}\times \T^{n}$ given by the vectors
\begin{align*}
(\lambda_1,0,..,0&;v_1,0,..,0),\\
&\vdots \\
(0,...,\lambda_{n-1}&;0,...,v_{n-1},0),\\
(-\lambda_n,...,-\lambda_n &;0,...,0,v_n).
\end{align*}
We have the following properties:
\begin{itemize}
    \item[a.] If $E(\lambda_1,...,\lambda_n,v_1,...,v_n)$ generates a dense subgroup of $\R^{n-1}\times \T^{n}$ then  $E(k_1 \lambda_1,...,k_n \lambda_n,k_1 v_1,...,k_n v_n)$ also generates a dense subgroup, for all $k_1,...,k_n \in \mathbb{Z}\setminus \{0\}$.
    \item[b.] If $E(\lambda_1,...,\lambda_n,v_1,...,v_n)$ generates a dense subgroup of $\R^{n-1}\times \T^{n}$ then it also generates a dense semigroup, i.e. the set of linear combinations of vectors in $E(\lambda_1,...,\lambda_n,v_1,...,v_n)$ with coeficients in $\mathbb{N}$ is dense in $\R^{n-1}\times \T^{n}$.
    \item[c.] The set of values $(\lambda_1,...,\lambda_n,v_1,...,v_n)\in \R_{<0}^{n}\times \T^n$ for which the set $E(\lambda_1,...,\lambda_n,v_1,...,v_n)$ generates a dense subgroup, is a countable intersection of open and dense subsets.
\end{itemize}
\end{lem}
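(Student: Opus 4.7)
The plan is to address (a), (b), (c) using Pontryagin duality as the common framework---this is precisely where Kronecker's theorem enters, since density of a subgroup in $J=\R^{n-1}\times\T^n$ is equivalent to triviality of its annihilator in the (torsion-free) dual $\hat J=\R^{n-1}\times\Z^n$. Label the generators of $E(\lambda,v)$ by $e_1,\dots,e_n\in J$.

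For (a), if a character $\chi\in\hat J$ annihilates every $k_je_j$, then $k_j\chi(e_j)=0$ in $\T$ for each $j$. Taking $N=\mathrm{lcm}(|k_1|,\dots,|k_n|)$, the character $N\chi$ satisfies $N\chi(e_j)=(N/k_j)\bigl(k_j\chi(e_j)\bigr)=0$ for all $j$, so $N\chi$ annihilates the dense subgroup $\langle e_1,\dots,e_n\rangle$; hence $N\chi=0$ in $\hat J$, and since $\hat J$ is torsion-free and $N\neq 0$, this forces $\chi=0$. Thus $\langle k_1e_1,\dots,k_ne_n\rangle$ is also dense.

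For (b), let $S=\{\sum m_je_j:m_j\in\N\}$; the aim is to show $\overline S=J$. Since $\overline S$ is a closed sub-semigroup containing $0$, it suffices to show $\overline S$ is symmetric under negation, after which $\overline S\supset\langle e_j\rangle$ and closure gives $\overline S=J$. Projecting to the compact factor via $\pi_\T$, the standard compact-semigroup argument (any sequence $kx$ has a convergent subsequence $n_k x\to y$, whence $(n_{k+1}-n_k)x\to 0$ and $(n_{k+1}-n_k-1)x\to -x$ through nonnegative-integer coefficients) shows $\overline{\pi_\T(S)}$ is a closed subgroup of $\T^n$, hence equals $\T^n$ by the density hypothesis. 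For the non-compact $\R^{n-1}$ factor I would exploit the structural asymmetry of the generators: $\pi_\R(e_n)=-\lambda_n(1,\dots,1)$ lies strictly in the positive diagonal while each $\pi_\R(e_j)=\lambda_jE_j$ for $j<n$ points strictly in the $-E_j$ direction, so nonnegative integer combinations of the $\pi_\R(e_j)$ already span all of $\R^{n-1}$. Combining this with Dirichlet's simultaneous Diophantine approximation of the ratios $\lambda_n/\lambda_j$, the plan is to produce sequences $p^{(k)}\in\N^n$ with every coordinate tending to $\infty$ and $\sum_j p_j^{(k)}e_j\to 0$ in $J$; given any subgroup approximation $\sum n_j e_j\approx g$, the shifted combination $\sum(n_j+p_j^{(k)})e_j$ then lies in $S$ for $k$ large and still approximates $g$, yielding $-e_i\in\overline S$ for each $i$ and hence $\overline S=J$.

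For (c), the dual characterization recasts non-density as the existence of a nontrivial $(\xi,m)\in\R^{n-1}\times\Z^n$ satisfying $\xi_j\lambda_j+m_jv_j\equiv 0\pmod{2\pi}$ for $j<n$ together with $-\lambda_n\sum_{j<n}\xi_j+m_nv_n\equiv 0\pmod{2\pi}$. For each nonzero discrete tuple $(m,\vec k,l)\in\Z^n\times\Z^{n-1}\times\Z$ the system (with $\xi_j\lambda_j+m_jv_j=2\pi k_j$ and the last sum $=2\pi l$) determines $\xi$ and reduces to a single real-analytic equation in $(\lambda,v)$, cutting out a closed subset $B_{m,\vec k,l}\subset\R^n_{<0}\times\T^n$. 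This equation is never identically satisfied on the parameter space: for $m\neq 0$ the $v_j$-dependence is nontrivial, while for $m=0$ with $(\vec k,l)\neq 0$ the $\lambda$-dependence already is. Hence each $B_{m,\vec k,l}$ has empty interior, the bad set $B=\bigcup B_{m,\vec k,l}$ is meager, and its complement---the set of $(\lambda,v)$ for which $E$ generates a dense subgroup---is a countable intersection of open dense subsets, as claimed. The main technical obstacle is in (b): while the $\T^n$-side is standard, extending the semigroup-equals-subgroup-closure conclusion across the non-compact $\R^{n-1}$ factor requires the joint Diophantine approximation step, which is where the density hypothesis (and hence Kronecker's theorem) does the real work.
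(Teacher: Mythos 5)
Your parts (a) and (c) are essentially sound. For (a), the annihilator argument (a nonzero $\chi$ killing every $k_je_j$ yields a torsion element $N\chi$ of the annihilator of the dense subgroup, hence $\chi=0$ since $\R^{n-1}\times\Z^n$ is torsion-free) is a clean, more abstract alternative to the paper's route, which instead makes the resonance condition explicit by a linear change of variables reducing everything to a single vector $A(\beta)\in\R^{2n-1}$ and invoking Kronecker. For (c), your parametrization of the bad sets by discrete tuples is the paper's argument before that change of variables, and the non-degeneracy check you sketch is correct.

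The genuine gap is in (b), precisely at the step you flag as the "main technical obstacle": producing $p^{(k)}\in\N^n$ with every coordinate tending to infinity and $\sum_j p_j^{(k)}e_j\to 0$ in $J$. Dirichlet applied to the ratios $\lambda_n/\lambda_j$ controls only the $\R^{n-1}$-coordinates $p_j\lambda_j-p_n\lambda_n$; it gives no control whatsoever on the $\T^n$-coordinates $(p_1v_1,\dots,p_nv_n)$, which must simultaneously tend to $0$. Worse, the existence of such $p^{(k)}$ is equivalent to $-M\sum_je_j\in\overline{S}$ for every $M\in\N$, i.e.\ it is (a special case of) the very statement that $\overline{S}$ is a group, so it cannot be taken as an input. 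The paper closes exactly this point as follows: after applying the linear map $A$ with $A(\alpha_j)=e_j$, density of the semigroup is equivalent to density of the set of points $b_0A(\beta)+b_1e_1+\cdots+b_{n-1}e_{n-1}+b_ne_n+\cdots+b_{2n-1}e_{2n-1}$ with $b_0,\dots,b_{n-1}\in\N$ and $b_n,\dots,b_{2n-1}\in\Z$, and the required simultaneity is supplied by the standard recurrence fact that the forward orbit $\{b_0\cdot\overline{A(\beta)}:b_0\in\N\}$ of a minimal translation of the compact group $\T^{2n-1}$ is dense. That single statement packages both the fractional parts of $b_0\lambda_n/\lambda_j$ (which, together with the sign condition $-\lambda_n/\lambda_j<0$, let you realize the needed coefficients $b_1,\dots,b_{n-1}$ as eventually nonnegative integers) and the angular data coming from the $v_j$. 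In short, your compact-semigroup observation is the right tool, but it must be applied on $\T^{2n-1}$ after the change of variables, not on the factor $\T^n$ of $J$ with the $\R^{n-1}$-part handled separately.
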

\begin{proof}
The lemma is proved using Kronecker's theorem. It states that a vector $(w_1,...,w_k)\in \T^k$ generates a dense subgroup if and only if there is not $(a_1,...,a_k)\in \Z^k\setminus \{0\}$ such that $a_1w_1+...+a_kw_k=0$.

Let $p:\R\to \T$ be the canonical projection and choose $\tilde{v}_j\in \R$, $1\leq j\leq n$, such that $p(\tilde{v}_j)=v_j$. Define vectors $\alpha_j$, $1\leq j\leq n-1$, and $\beta$ by 
\begin{align*}
\alpha_j&=(0,...,\lambda_j,..,0;0,...,\tilde{v}_j,..,0)\\
\beta&=(-\lambda_n,...,-\lambda_n ;0,...,0,\tilde{v}_n).
\end{align*}
Note that $E(\lambda_1,...,\lambda_n,v_1,...,v_n)$ generates a dense subgroup in $\R^{n-1}\times \T^{n}$ if and only if the set 
\[\{\beta,\alpha_1,...,\alpha_{n-1},e_n,...,e_{2n-1}\}\]
generates a dense subgroup in $\R^{2n-1}$. Here $e_1,...,e_{2n-1}$ is the canonical base of $\R^{2n-1}$.

Moreover, this last property is invariant by linear tansformations in $\R^{2n-1}$. Let $A:\R^{2n-1} \to \R^{2n-1}$ be the linear map such that $A(\alpha_j)=e_j$ for $1\leq j\leq n-1$ and $A(e_j)=e_j$ for $n\leq j\leq 2n-1$. Then, $E(\lambda_1,...,\lambda_n,v_1,...,v_n)$ generates a dense subgroup in $\R^{n-1}\times \T^{n}$ if and only if the set
\[\{A(\beta),e_1,...,e_{n-1},e_n,...,e_{2n-1}\}\]
generates a dense subgroup in $\R^{2n-1}$. It is clear that this happens if and only if the projection of $A(\beta)$ to $\T^{2n-1}$ generates a dense subgroup in $\T^{2n-1}$. Thus, using Kronecker's theorem we see that $E(\lambda_1,...,\lambda_n,v_1,...,v_n)$ generates a dense subgroup in $\R^{n-1}\times \T^{n}$ if and only if there is not $a=(a_1,...,a_{2n-1})\in \Z^{2n-1}\setminus \{0\}$ such that
\[ \langle a, A(\beta)\rangle \in \Z.\] 
Moreover, it is not difficult to see that
\[A(\beta)=(\frac{-\lambda_n}{\lambda_1},...,\frac{-\lambda_n}{\lambda_{n-1}}; \frac{ \tilde{v}_1 \lambda_n}{\lambda_1},..., \frac{\tilde{v}_{n-1}\lambda_n}{\lambda_{n-1}},\tilde{v}_n).\]
From all this, one easily gets that a. is true. For c. notice that the set of values $(\lambda_1,...,\lambda_n,v_1,...,v_n)$ such that $E(\lambda_1,...,\lambda_n,v_1,...,v_n)$ generates a dense subgroup corresponds to the intersection, varying $a\in \Z^{2n-1}\setminus \{0\}$, of the sets
\[\{(\lambda_1,...,\lambda_n,v_1,...,v_n): \langle a, A(\beta)\rangle \notin \Z\},\]
and each one of these sets is open and dense. 

Finally, to justify b. notice that $E(\lambda_1,...,\lambda_n,v_1,...,v_n)$ generates a dense semigroup if and only if the set
\[\{b_0\beta+b_1 \alpha_1+..+b_{n-1}\alpha_{n-1}+b_n e_n+..+b_{2n-1} e_{2n-1}: b_0,..,b_{n-1}\in \N,\,\,b_n,..,b_{2n-1}\in \Z\}\]
is dense. Applying the linear transformation $A$, this is equivalent to
\[\{b_0A(\beta)+b_1 e_1+..+b_{n-1}e_{n-1}+b_n e_n+..+b_{2n-1} e_{2n-1}: b_0,..,b_{n-1}\in \N,\,\,b_n,..,b_{2n-1}\in \Z\}\]
being dense. Now, using the expression for $A(\beta)$ this set becomes
\begin{align*}
\{ ( b_1-b_0\frac{\lambda_n}{\lambda_1},..,b_{n-1}-b_0\frac{\lambda_n}{\lambda_{n-1}}; b_n+b_0 \frac{\tilde{v}_1 \lambda_n}{\lambda_1},.., & b_{2n-2}+b_0\frac{\tilde{v}_{n-1}\lambda_n}{\lambda_{n-1}},b_{2n-1}+b_0\tilde{v}_n) \\
&: b_0,..,b_{n-1}\in \N,\,\,b_n,..,b_{2n-1}\in \Z\}.
\end{align*}
Notice that if $E(\lambda_1,...,\lambda_n,v_1,...,v_n)$ generates a dense subgroup then the set 
\[\{b_0 \cdot (p\left(-\lambda_n /\lambda_1\right),..,p\left(-\lambda_n/\lambda_{n-1}\right); p\left( \tilde{v}_1 \lambda_n/\lambda_1\right),.., p\left(\tilde{v}_{n-1}\lambda_n/\lambda_{n-1}\right),v_n): b_0 \in \N\}\]
is dense in $\T^{2n-1}$. Since $\frac{-\lambda_n}{\lambda_j}<0$ we conclude that the density of the group generated by $E(\lambda_1,...,\lambda_n,v_1,...,v_n)$ implies that
\begin{align*}
\{ ( b_1-b_0\frac{\lambda_n}{\lambda_1},..,b_{n-1}-b_0\frac{\lambda_n}{\lambda_{n-1}}; b_n+\frac{b_0 \tilde{v}_1 \lambda_n}{\lambda_1},.., & b_{2n-2}+\frac{b_0\tilde{v}_{n-1}\lambda_n}{\lambda_{n-1}},b_{2n-1}+b_0\tilde{v}_n) \\
&: b_0,..,b_{n-1}\in \N,\,\,b_n,..,b_{2n-1}\in \Z\}
\end{align*}
is dense.
\end{proof}

\section{Proof of the Scale Recurrence Lemma}

In this section we will present the proof of the scale recurrence lemma, it follows the ideas in \cite{MY} with some modifications. The main new features are the use of the not essentially real hypotheses and the introduction of new objects in order to close a gap in the proof given in \cite{MY}. We start proving some results in a more general setting.

\subsection{General Setting}

We proceed as in 6.1 \cite{MY} using Fourier analysis in the group $J$ instead of $\mathbb{R}$. Let $A$ be a set of indices, $\Lambda$ a finite set and maps $\alpha:\Lambda\to A$, $\omega:\Lambda\to A$. Define $\Lambda_i=\alpha^{-1}(i)$, $\Lambda^j=\omega^{-1}(j)$, $\Lambda_i^j=\Lambda_i\cap \Lambda^j$, $N_i=\#\Lambda_i$, $N_i^j=\#\Lambda_i^j$, $p_i^j=N_i^j/N_i$. The numbers $(p_i^j)$ define a stochastic matrix, it has a probability vector $(p^i)$ verifying
\[\sum_{i \in A}p^i p_i^j=p^j,\,\,\,\sum_{i\in A}p^i=1.\]
Set
\[ p_{\lambda}^{\lambda'}=\begin{cases}
		0  & \mbox{if } \omega(\lambda) \neq \alpha(\lambda') \\
		1/N_{\omega(\lambda)} & \mbox{if } \omega(\lambda) = \alpha(\lambda')
	\end{cases}
\]
and
\[p^{\lambda}=\frac{p^{\alpha(\lambda)}}{N_{\alpha(\lambda)}}.\]
It is easily proved that $(p_{\lambda}^{\lambda'})$ is a stochastic matrix with probability vector $(p^{\lambda})$. Let $J^*=\mathbb{R}^{n-1}\times \mathbb{Z}^n$ denote the Pontryagin dual of $J$. Elements $\xi=(\mu_1,...,\mu_{n-1},m_1,...,m_n)\in J^*$ are homomorphisms from $J$ to $\mathbb{S}^1$, given by 
\[\xi(t_1,...,t_{n-1},v_1,...,v_n)=e^{\left(\sum_{j=1}^{n-1}t_j\mu_j+ \sum_{j=1}^{n}m_jv_j\right)i}.\]
Now suppose that for each $(\lambda,\lambda')\in \Lambda^2$, there is an element $a_{\lambda}^{\lambda'} \in J$. Using this we define, for each $\xi \in J^*$, a linear operator $T_{\xi}:\C^{\Lambda}\to \C^{\Lambda}$ given by $T_{\xi}((z_{\lambda})_{\lambda \in \Lambda})=(w_{\lambda})_{\lambda \in \Lambda}$ where
\[w_\lambda=\sum_{\lambda'\in \Lambda} p_{\lambda}^{\lambda'}\xi(a_{\lambda}^{\lambda'})z_{\lambda'}.\]
We endow the space $\C^{\Ld}$ with the norm
\[\|(z_{\ld})_{\ld\in\Ld}\|^2=\sum_{\ld\in\Ld}p^{\ld}|z_{\ld}|^2.\]
In a similar way to \cite{MY}, a short computation shows that $\left\| T_{\xi}\right\|\leq 1$, for all $\xi\in J^*$.

Assume that we have a family $\{E(\lambda)\}_{\lambda\in \Lambda}$ of bounded measurable subsets of $J$, consider the function
\[n_{\lambda}(x)=\frac{1}{N_{\omega(\lambda)}}\cdot \#\{\lambda'\in \Lambda_{\omega(\lambda)}: B_{\rho}(x+a_{\lambda}^{\lambda'})\subset E(\lambda')\}.\]
Let $0<\tau<1$, denote by $E^*(\lambda)$ the set:
\[E^*(\lambda)=\{x\in J: n_{\lambda}(x)>\tau\}.\]

\begin{prop}\label{fourier}
Suppose there exist $\Delta_0>0$ and $k_{0}\in (0,1)$ such that $\left\| T_{\xi}\right\|<k_0$ for all $\xi=(\mu_1,...,\mu_{n-1},m_1,...,m_n)$, with $|\xi|=\max_j \{|\mu_j|,|m_j|\}\in [1,\Delta_0 \rho^{-1}]$. Then there exist $k_1\in (0,1)$, $\epsilon>0$, and $\tau \in (0,1)$ depending only in $\Delta_0$, $k_0$ (and not in $\rho$) such that if $\nu(E(\lambda))\leq \epsilon$, for all $\lambda \in \Lambda$, then
\[\sum_{\lambda \in \Lambda}p^{\lambda}\nu(E^*(\lambda))\leq k_1 \sum_{\lambda \in \Lambda}p^{\lambda}\nu(E(\lambda)).\]
\end{prop}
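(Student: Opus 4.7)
The approach is the standard Fourier-theoretic one from \cite{MY}, carried out on the locally compact abelian group $J=\R^{n-1}\times\T^n$ with Pontryagin dual $J^*=\R^{n-1}\times\Z^n$. The first step would be to replace the rough indicator $\psi_\lambda(x):=\mathbf{1}_{\{B_\rho(x)\subset E(\lambda)\}}$ by a smoother bound whose Fourier transform decays. Fix a nonnegative $\phi\in C_c^\infty(B_1(0))$ with $\int\phi\,d\nu=1$, let $\phi_\rho(x)=\rho^{-(2n-1)}\phi(x/\rho)$, and set $\tilde\psi_\lambda:=\mathbf{1}_{E(\lambda)}*\phi_\rho$. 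A one-line check (if $B_\rho(x)\subset E(\lambda)$ then $\mathbf{1}_{E(\lambda)}*\phi_\rho(x)=\int\phi=1$) gives $0\le\psi_\lambda\leq\tilde\psi_\lambda\leq 1$ pointwise, hence
\[n_\lambda(x)\leq\tilde n_\lambda(x):=\sum_{\lambda'}p_\lambda^{\lambda'}\tilde\psi_{\lambda'}(x+a_\lambda^{\lambda'}),\]
and Chebyshev's inequality yields $\tau^2\nu(E^*(\lambda))\leq\int_J|\tilde n_\lambda(x)|^2\,d\nu(x)$, so it suffices to bound $\sum_\lambda p^\lambda\int|\tilde n_\lambda|^2\,d\nu$.

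The second step is Plancherel. Because translation by $a_\lambda^{\lambda'}$ multiplies Fourier transforms by the character $\xi\mapsto\xi(a_\lambda^{\lambda'})$, one has $\widehat{(\tilde n)}_\lambda(\xi)=(T_\xi\hat{\tilde\psi}(\xi))_\lambda$, and Parseval in the weighted norm $\|(z_\lambda)\|^2=\sum_\lambda p^\lambda|z_\lambda|^2$ gives
\[\sum_{\lambda\in\Ld}p^\lambda\int_J|\tilde n_\lambda|^2\,d\nu=\int_{J^*}\|T_\xi\hat{\tilde\psi}(\xi)\|^2\,d\xi^*.\]
I would then split $J^*$ into three pieces and use a different estimate on each. \emph{(i)} On the low range $\{|\xi|<1\}$, which reduces to the integer part $m=0$ and $|\mu_j|<1$ and therefore has finite dual Haar measure $C_1$, use $\|T_\xi\|\leq 1$ together with the crude bound $|\hat{\tilde\psi}_\lambda(\xi)|\leq\|\tilde\psi_\lambda\|_{L^1}=\nu(E(\lambda))\leq\epsilon$, giving a contribution $\leq C_1\epsilon\sum p^\lambda\nu(E(\lambda))$. \emph{(ii)} On the intermediate range $\{1\leq|\xi|\leq\Delta_0\rho^{-1}\}$, use the hypothesis $\|T_\xi\|<k_0$ and Plancherel $\int_{J^*}\|\hat{\tilde\psi}\|^2\,d\xi^*=\sum p^\lambda\|\tilde\psi_\lambda\|_{L^2}^2\leq\sum p^\lambda\nu(E(\lambda))$ (since $\tilde\psi_\lambda\leq 1$, so $\tilde\psi_\lambda^2\leq\tilde\psi_\lambda$), giving a contribution $\leq k_0^2\sum p^\lambda\nu(E(\lambda))$. \emph{(iii)} On the tail $\{|\xi|>\Delta_0\rho^{-1}\}$, use the factorization $\hat{\tilde\psi}_\lambda=\hat\phi_\rho\cdot\hat{\mathbf{1}}_{E(\lambda)}$ and the rapid decay of $\hat\phi_\rho(\xi)=\hat\phi(\rho\xi)$, which for a suitable Schwartz $\phi$ can be made pointwise bounded by any prescribed $\delta$ on this range, to obtain $\leq\delta\sum p^\lambda\nu(E(\lambda))$. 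Putting the three bounds together,
\[\sum_\lambda p^\lambda\nu(E^*(\lambda))\leq\tau^{-2}\bigl(C_1\epsilon+k_0^2+\delta\bigr)\sum_\lambda p^\lambda\nu(E(\lambda)).\]
Since $k_0<1$, one takes $\epsilon$ and $\delta$ small, then $\tau$ close enough to $1$, so that the prefactor is a constant $k_1<1$, which is the desired conclusion.

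The main obstacle is the high-frequency tail. The un-mollified $\psi_\lambda$ admits only the trivial estimate $|\hat\psi_\lambda(\xi)|\leq\nu(E(\lambda))$, which has no decay in $\xi$ and therefore cannot absorb the $\{|\xi|>\Delta_0\rho^{-1}\}$ integral; it is precisely the convolution with $\phi_\rho$ that injects the needed smoothness. The technical delicacy is that $\phi$ must simultaneously be compactly supported in $B_1(0)$ (so that $\psi_\lambda\leq\mathbf{1}_{E(\lambda)}*\phi_\rho$ holds) and have $\hat\phi$ as small as one wishes on $\{|\eta|>\Delta_0\}$, which is possible for any fixed $\Delta_0$ but requires choosing $\phi$ depending on $\Delta_0$ and $k_0$. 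A secondary bookkeeping issue is the correct Haar normalization on $J$ (Lebesgue on $\R^{n-1}$ times normalized Haar on $\T^n$, as in the paper's definition of $\nu$) and its dual on $J^*=\R^{n-1}\times\Z^n$, which enters only through the absolute constant $C_1$.
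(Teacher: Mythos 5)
Your architecture is exactly the paper's: dominate $n_\lambda$ by a mollified version, apply Chebyshev and Plancherel on $J^*=\R^{n-1}\times\Z^n$, and split the frequency integral into $\{|\xi|<1\}$, $\{1\le|\xi|\le\Delta_0\rho^{-1}\}$ and the tail. Steps (i) and (ii) are fine. The gap is in step (iii): you claim that a nonnegative $\phi\in C_c^\infty(B_1(0))$ with $\int\phi\,d\nu=1$ can be chosen so that $|\hat\phi(\eta)|\le\delta$ for all $|\eta|>\Delta_0$, with $\delta$ as small as prescribed. This is false when $\Delta_0$ is small --- and $\Delta_0$ is dictated by the hypothesis and is indeed taken small in the paper's application of the proposition. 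For any such $\phi$ one has $\mathrm{Re}\,\hat\phi(\eta)=\int\phi(x)\cos\langle x,\eta\rangle\,d\nu(x)\ge\cos|\eta|$ whenever $|\eta|\le\pi/2$, since $|\langle x,\eta\rangle|\le|x|\,|\eta|\le|\eta|$ on the support; hence $|\hat\phi(\eta)|\ge\cos(2\Delta_0)$ for $\Delta_0<|\eta|<2\Delta_0$, which is close to $1$, not to $0$. This is the uncertainty principle: a probability density supported at scale $\rho$ cannot have a Fourier transform that is uniformly small just beyond frequency $\Delta_0\rho^{-1}$. Consequently your final constant $\tau^{-2}(C_1\epsilon+k_0^2+\delta)$ cannot be pushed below $1$ by ``choosing $\delta$ small'': the best available $\delta$ is some $\tilde k_1^2$ with $\tilde k_1<1$ possibly close to $1$, and then $k_0^2+\tilde k_1^2$ may well exceed $1$.

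The repair is exactly what the paper does (with the cruder mollifier $\nu(B_\rho(0))^{-1}1_{B_\rho(0)}$, whose transform is a product of $\sin t/t$ factors, at least one of which has argument $\ge\Delta_0$ in the tail). One only needs $\sup_{|\eta|\ge\Delta_0}|\hat\phi(\eta)|=\tilde k_1<1$, which does hold for any admissible mollifier by strict inequality off $\eta=0$, decay at infinity and continuity; crucially $\tilde k_1$ depends only on $\Delta_0$, not on $\rho$. Then, instead of adding the contributions of regimes (ii) and (iii), exploit that the two frequency regions are disjoint, so that by Plancherel
\[k_0^2\int_{1\le|\xi|\le\Delta_0\rho^{-1}}\sum_\lambda p^\lambda|\hat X_\lambda(\xi)|^2\,d\hat\nu(\xi)+\tilde k_1^2\int_{|\xi|>\Delta_0\rho^{-1}}\sum_\lambda p^\lambda|\hat X_\lambda(\xi)|^2\,d\hat\nu(\xi)\le\max\{k_0^2,\tilde k_1^2\}\sum_\lambda p^\lambda\nu(E(\lambda)),\]
and $\max\{k_0^2,\tilde k_1^2\}$ is strictly less than $1$. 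With this substitution, followed by taking $\epsilon$ small and a suitable $\tau\in(0,1)$, your argument closes and coincides with the paper's proof.
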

\begin{proof}
Consider the functions
\begin{align*} X_{\lambda}&=1_{E(\lambda)},\,\,Y_{\lambda}(x)=\sum_{\lambda'\in \Lambda}p_{\lambda}^{\lambda'}X_{\lambda'}(x+a_{\lambda}^{\lambda'}),\\
Z_{\lambda}(x)&=\frac{1}{\nu(B_{\rho}(0))}\cdot \int_{B_{\rho}(0)}Y_{\lambda}(x-t)d\nu(t)=\frac{1}{\nu(B_{\rho}(0))} 1_{B_{\rho}(0)}\ast Y_{\lambda}(x).
\end{align*}
Note that $Z_{\lambda}(x)\geq n_{\lambda}(x)$, then
\[\left\|Z_{\lambda}\right\|_{L^2}^2\geq \tau^2 \nu(E^*(\lambda))\]
which implies
\begin{equation}\label{eq:ztau}
\sum_{\lambda\in\Lambda}p^{\lambda}\nu(E^*(\lambda))\leq \tau^{-2}\sum_{\lambda\in\Lambda}p^{\lambda}\left\|Z_{\lambda}\right\|_{L^2}^2.
\end{equation}
The Fourier transforms of $X_{\lambda}$, $Y_{\lambda}$, $Z_{\lambda}$ are
\begin{align*}
\hat{X}_{\lambda}(\xi)&=\int_{J}X_{\lambda}(x)\bar{\xi}(x)d\nu(x),\\
\hat{Y}_{\lambda}(\xi)&= \sum_{\lambda'\in \Lambda}p_{\lambda}^{\lambda'}\xi(a_{\lambda}^{\lambda'})\hat{X}_{\lambda'}(\xi),\\
\hat{Z}_{\lambda}(\xi)&=\frac{1}{\nu(B_{\rho}(0))} \hat{1}_{B_{\rho}(0)}\cdot \hat{Y}_{\lambda}(\xi)= \prod_{j=1}^{n-1}\frac{sin(\mu_j\rho)}{\mu_j\rho} \prod_{j=1}^{n}\frac{sin(m_j\rho)}{m_j\rho}\cdot \hat{Y}_{\lambda}(\xi),
\end{align*}
where $\xi=(\mu_1,...,\mu_{n-1},m_1,...,m_n)$. Hence $|\hat{Z}_{\lambda}(\xi)|\leq |\hat{Y}_{\lambda}(\xi)|$, and there exist $\tilde{k}_1\in (0,1)$, depending only in $\Delta_0$, such that $|\hat{Z}_{\lambda}(\xi)|\leq \tilde{k}_1|\hat{Y}_{\lambda}(\xi)|$ if $|\xi|>\Delta_0 \rho^{-1}$. We estimate  $\sum_{\lambda \in \Lambda}p^{\lambda}|\hat{Z}_{\lambda}(\xi)|^2$ in various ways depending on $|\xi|$.

If $|\xi|<1$ then:
\begin{align*}
|\hat{Z}_{\lambda}(\xi)|\leq |\hat{Y}_{\lambda}(\xi)| &\leq \int_{J}|Y_{\lambda}(x)|d\nu(x)\\
&\leq \sum_{\lambda'\in \Lambda}p_{\lambda}^{\lambda'}\nu(E(\lambda')),
\end{align*}
therefore
\begin{align*}
\sum_{\lambda \in \Lambda}p^{\lambda}|\hat{Z}_{\lambda}(\xi)|^2 &\leq \sum_{\lambda' \in \Lambda} \left( \sum_{\lambda \in \lambda}p^{\lambda}p_{\lambda}^{\lambda'}\right)\nu(E(\lambda'))^2\\
&= \sum_{\lambda\in \Lambda}p^{\lambda} \nu(E(\lambda))^2.
\end{align*}
If $1\leq |\xi| \leq \Delta_{0}\rho^{-1}$:
\begin{align*}
\sum_{\lambda \in \Lambda}p^{\lambda}|\hat{Z}_{\lambda}(\xi)|^2&\leq \sum_{\lambda \in \Lambda}p^{\lambda}|\hat{Y}_{\lambda}(\xi)|^2\\
&\leq k_0^2 \sum_{\lambda \in \Lambda}p^{\lambda}|\hat{X}_{\lambda}(\xi)|^2, 
\end{align*}
note that we used the fact that $(\hat{Y}_{\lambda}(\xi))_{\lambda \in \Lambda}=T_{\xi}((\hat{X}_{\lambda}(\xi))_{\lambda \in \Lambda})$.

If $|\xi|>\Delta_{0}\rho^{-1}$:
\begin{align*}
\sum_{\lambda \in \Lambda}p^{\lambda}|\hat{Z}_{\lambda}(\xi)|^2&\leq \tilde{k}_1^2 \sum_{\lambda \in \Lambda}p^{\lambda}|\hat{Y}_{\lambda}(\xi)|^2\\
&\leq \tilde{k}_1^2 \sum_{\lambda \in \Lambda}p^{\lambda}|\hat{X}_{\lambda}(\xi)|^2. 
\end{align*}
Combining all three inequalities we get
\begin{align*}
\int_{J^*}\sum_{\lambda \in \Lambda}p^{\lambda}&|\hat{Z}_{\lambda}(\xi)|^2d\hat{\nu}(\xi) \\
&\leq \int_{|\xi|<1} \sum_{\lambda\in \Lambda}p^{\lambda} \nu(E(\lambda))^2 d\hat{\nu}(\xi)+ k_{0}^2\int_{1\leq |\xi|\leq \Delta_0\rho^{-1}} \sum_{\lambda \in \Lambda}p^{\lambda}|\hat{X}_{\lambda}(\xi)|^2 d\hat{\nu}(\xi)\\
&+ \tilde{k}_1^2\int_{|\xi|>\Delta_0\rho^{-1}} \sum_{\lambda \in \Lambda}p^{\lambda}|\hat{X}_{\lambda}(\xi)|^2 d\hat{\nu}(\xi).
\end{align*}
Hence
\[ \sum_{\lambda \in \Lambda}p^{\lambda}\left\|\hat{Z}_{\lambda}\right\|_{L^2}^2 \leq \hat{\nu}(\{|\xi|<1\})\cdot \epsilon \sum_{\lambda\in \Lambda}p^{\lambda} \nu(E(\lambda))+\max\{k_0^2,\tilde{k}_1^2\} \sum_{\lambda \in \Lambda}p^{\lambda}\left\|\hat{X}_{\lambda}\right\|_{L^2}^2.\]
Using Plancherel theorem and the fact that $\nu(E(\lambda))=\left\|X_{\lambda}\right\|_{L^2}^2$ we get
\[ \sum_{\lambda \in \Lambda}p^{\lambda}\left\|Z_{\lambda}\right\|_{L^2}^2 \leq [\hat{\nu}(\{|\xi|<1\})\cdot \epsilon +\max\{k_0^2,\tilde{k}_1^2\}]\sum_{\lambda\in \Lambda}p^{\lambda} \nu(E(\lambda)).\]
This together with equation (\ref{eq:ztau}) implies
\[ \sum_{\lambda\in\Lambda}p^{\lambda}\nu(E^*(\lambda))\leq \tau^{-2} [\hat{\nu}(\{|\xi|<1\})\cdot \epsilon +\max\{k_0^2,\tilde{k}_1^2\}]\sum_{\lambda\in \Lambda}p^{\lambda} \nu(E(\lambda)).\]
Finally we get the desired inequality setting
\begin{align*}
\tau=[\hat{\nu}(\{|\xi|<1\})\cdot \epsilon +\max\{k_0^2,\tilde{k}_1^2\}]^{1/3},\\
k_1=\tau^{-2} [\hat{\nu}(\{|\xi|<1\})\cdot \epsilon +\max\{k_0^2,\tilde{k}_1^2\}],
\end{align*}
and taking $\epsilon$ small enough such that 
\[\hat{\nu}(\{|\xi|<1\})\cdot \epsilon +\max\{k_0^2,\tilde{k}_1^2\}<1.\]
\end{proof}
Let $\Delta_1>0$ be any positive number and consider
\[\hat{n}_{\lambda}(x)=\frac{1}{\# \Lambda_{\omega(\lambda)}}\cdot \#\{\lambda'\in \Lambda_{\omega(\lambda)}:B_{\Delta_1 \rho}(x+a_{\lambda}^{\lambda'})\cap E(\lambda')\neq \emptyset \},\]
define $\hat{E}(\lambda)=\{x\in J: \hat{n}_{\lambda}(x)>\tau\}$. For a set $E\subset J$ denote by $V_{\delta}(E)$ the $\delta$-neighborhood of $E$.

\begin{cor}\label{cor:general}
Under the same hypothesis as Prop. \ref{fourier}, let $k_4>0$ such that $k_1<k_4<1$, if we choose $\Delta>0$ big enough and $\epsilon_1>0$ small enough such that
\begin{align*}
k_1\left(1+\frac{1+\Delta_1}{\Delta}\right)^{2n-1}&<k_4,\\
\epsilon_1 \left(1+\frac{1+\Delta_1}{\Delta}\right)^{2n-1} &< \epsilon,
\end{align*}
then $\nu(V_{\Delta \rho}(E(\lambda)))\leq \epsilon_1$, for all $\lambda\in \Lambda$, implies that
\[\sum_{\lambda \in \Lambda}p^{\lambda}\nu(V_{\Delta \rho}(\hat{E}(\lambda)))\leq k_4 \sum_{\lambda \in \Lambda}p^{\lambda}\nu(V_{\Delta \rho}(E(\lambda))).\]
\end{cor}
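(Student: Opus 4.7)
The plan is to reduce Corollary~\ref{cor:general} to Proposition~\ref{fourier} by applying the latter to an enlarged family. Set
\[
\tilde{E}(\lambda) := V_{(\Delta+\Delta_1+1)\rho}(E(\lambda)),
\]
and let $\tilde{E}^*(\lambda)$ denote the set produced by Proposition~\ref{fourier} when the same parameters $\rho$ and $\tau$ are used but with the family $\{E(\lambda)\}$ replaced by $\{\tilde{E}(\lambda)\}$. The structural claim I would verify first is the inclusion $V_{\Delta\rho}(\hat{E}(\lambda)) \subset \tilde{E}^*(\lambda)$. Given $y = x+u$ with $x\in\hat{E}(\lambda)$ and $|u|<\Delta\rho$, for more than $\tau\,\#\Lambda_{\omega(\lambda)}$ indices $\lambda'$ one has $x+a_{\lambda}^{\lambda'}\in V_{\Delta_1\rho}(E(\lambda'))$; the triangle inequality upgrades this to $y+a_{\lambda}^{\lambda'}\in V_{(\Delta+\Delta_1)\rho}(E(\lambda'))$, and consequently $B_{\rho}(y+a_{\lambda}^{\lambda'})\subset V_{(\Delta+\Delta_1+1)\rho}(E(\lambda'))=\tilde{E}(\lambda')$, so $y\in\tilde{E}^*(\lambda)$.

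The second ingredient is the volume-doubling inequality
\[
\nu\bigl(V_{(\Delta+\Delta_1+1)\rho}(E(\lambda))\bigr) \;\leq\; \left(1+\frac{\Delta_1+1}{\Delta}\right)^{2n-1}\!\nu\bigl(V_{\Delta\rho}(E(\lambda))\bigr). \qquad (\ast)
\]
Granting $(\ast)$, the hypothesis $\nu(V_{\Delta\rho}(E(\lambda)))\leq\epsilon_1$ together with $\epsilon_1\bigl(1+(1+\Delta_1)/\Delta\bigr)^{2n-1}<\epsilon$ gives $\nu(\tilde{E}(\lambda))<\epsilon$ for every $\lambda$; hence Proposition~\ref{fourier} applies and yields
\[
\sum_{\lambda\in\Lambda} p^{\lambda}\,\nu(\tilde{E}^*(\lambda)) \;\leq\; k_{1}\sum_{\lambda\in\Lambda} p^{\lambda}\,\nu(\tilde{E}(\lambda)).
\]
Chaining the inclusion from the first step with this bound and using $(\ast)$ once more,
\[
\sum_{\lambda} p^{\lambda}\,\nu\bigl(V_{\Delta\rho}(\hat{E}(\lambda))\bigr) \;\leq\; k_{1}\!\left(1+\frac{1+\Delta_1}{\Delta}\right)^{\!2n-1}\!\sum_{\lambda} p^{\lambda}\,\nu\bigl(V_{\Delta\rho}(E(\lambda))\bigr) \;\leq\; k_{4}\sum_{\lambda} p^{\lambda}\,\nu\bigl(V_{\Delta\rho}(E(\lambda))\bigr),
\]
which is the desired conclusion, by the first condition imposed on $\Delta$.

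The main technical hurdle is therefore $(\ast)$. This is where the specific geometry of $J=\R^{n-1}\times\T^{n}$ enters: its invariant metric is the $\ell^{\infty}$-type one, so balls are (products of) cubes of total dimension $2n-1$. Using the Minkowski representation $V_{t}(E)=E+B_{t}(0)$, I would tile the larger cube $B_{(\Delta+\Delta_1+1)\rho}(0)$ by translates of the smaller cube $B_{\Delta\rho}(0)$: the number of tiles required is exactly $\bigl(1+(\Delta_1+1)/\Delta\bigr)^{2n-1}$ when this ratio is an integer, and $\lceil\,\cdot\,\rceil^{2n-1}$ otherwise. Translating each tile back gives a cover of $V_{(\Delta+\Delta_1+1)\rho}(E(\lambda))$ by the same number of translates of $V_{\Delta\rho}(E(\lambda))$, from which $(\ast)$ follows. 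The small slack caused by the ceiling can be absorbed by choosing $\Delta$ slightly larger than the strict inequalities in the hypothesis require, or by a standard limiting argument; the rest is bookkeeping on neighborhoods and the invocation of Proposition~\ref{fourier}.
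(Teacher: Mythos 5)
Your overall reduction is exactly the one the paper uses: you set $A(\lambda)=V_{(1+\Delta+\Delta_1)\rho}(E(\lambda))$, verify the inclusion $V_{\Delta\rho}(\hat{E}(\lambda))\subset A^*(\lambda)$ by the triangle-inequality argument, apply Proposition \ref{fourier} to the enlarged family, and chain the estimates; those steps are all correct and identical in substance to the paper's proof. The difference is that the paper treats the volume-growth inequality $(\ast)$ as a known fact (it cites \cite{GL} for it), whereas you attempt to prove it by tiling — and that is where your argument breaks.

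The covering number of the cube $B_{(\Delta+\Delta_1+1)\rho}(0)$ by translates of $B_{\Delta\rho}(0)$ is $\lceil 1+(\Delta_1+1)/\Delta\rceil^{2n-1}$, and since $\Delta_1$ is fixed while $\Delta$ is taken large, the ratio $1+(\Delta_1+1)/\Delta$ lies strictly between $1$ and $2$, so the ceiling equals $2$ and the tiling bound is $2^{2n-1}$ \emph{for every} admissible $\Delta$. This is not a "small slack": the whole point of the corollary is that the multiplicative loss tends to $1$ as $\Delta\to\infty$, so that $k_1\cdot(\text{loss})<k_4<1$ can be arranged; a loss of $2^{2n-1}$ cannot be absorbed by enlarging $\Delta$ or by any limiting argument, and $k_1\cdot 2^{2n-1}$ may well exceed $1$. (A sanity check: for $E$ a single point the two sides of $(\ast)$ are equal, so the sharp constant really is $\bigl(1+(\Delta_1+1)/\Delta\bigr)^{2n-1}$ and any cover-counting proof that overshoots it is lossy in an essential way.) The inequality $(\ast)$ is true, but it is a genuinely nontrivial statement — essentially that $\nu(A+\lambda K)\le \lambda^{2n-1}\,\nu(A+K)$ for $\lambda\ge 1$ and $K$ a convex symmetric body, a Pl\"unnecke--Ruzsa/Kneser-type submultiplicativity of volumes of Minkowski sums, which is the content of the reference \cite{GL} the paper invokes. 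You should either cite that result or supply one of its proofs; the naive tiling does not yield it.
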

\begin{proof}
First observe that (see \cite{GL})
\[\nu(V_{\rho+\Delta\rho+\Delta_1\rho}(E(\lambda)))\leq \left(1+\frac{1+\Delta_1}{\Delta}\right)^{2n-1} \nu(V_{\Delta\rho}(E(\lambda))).\]
Now consider the family $A(\lambda)=V_{\rho+\Delta\rho+\Delta_1\rho}(E(\lambda))$, by our choice of $\epsilon_1$ we can apply proposition \ref{fourier} to $A(\lambda)$. Notice also that if $x\in V_{\Delta \rho}(\hat{E}(\lambda))$ then there exist $y\in B_{\Delta\rho}(x)$ and $\tau\cdot \#\Lambda_{\omega(\lambda)}$ elements $\lambda'\in\Lambda_{\omega(\lambda)}$ such that 
\[B_{\Delta_1 \rho}(y+a_{\lambda}^{\lambda'})\cap E(\lambda')\neq \emptyset,\]
thus 
\[B_{\rho}(x+a_{\lambda}^{\lambda'})\subset V_{\rho+\Delta\rho+\Delta_1\rho}(E(\lambda'))=A(\lambda').\]
This shows that $V_{\Delta \rho}(\hat{E}(\lambda))\subset A^*(\lambda)$. Applying proposition \ref{fourier} to $A(\lambda)$ gives
\begin{align*}\sum_{\lambda \in \Lambda}p^{\lambda}\nu(V_{\Delta \rho}(\hat{E}(\lambda))) &\leq \sum_{\lambda \in \Lambda}p^{\lambda}\nu(A^*(\lambda))\\ & \leq k_1 \sum_{\lambda \in \Lambda}p^{\lambda}\nu(V_{\rho+\Delta\rho+\Delta_1\rho}(E(\lambda))) \\
&\leq k_1\left(1+\frac{1+\Delta_1}{\Delta}\right)^{2n-1} \sum_{\lambda \in \Lambda}p^{\lambda}\nu(V_{\Delta\rho}(E(\lambda)))\\
&\leq k_4 \sum_{\lambda \in \Lambda}p^{\lambda}\nu(V_{\Delta \rho}(E(\lambda))).
\end{align*}
\end{proof}

\subsection{Proof of Theorem A}

In this subsection we will prove the multidimensional conformal scale recurrence lemma, first we will fix the values of the main parameters playing a role in the proof, this is done to make it clear that there are not contradictions between their values. Start choosing a positive constant $\mu$ such that
\begin{equation}
-\log r^{\tb}_{\ul{c}}<\mu,
\end{equation}
for any $\tb\in \Sigma_j^{-}$, and $\ul{c}=(c_0,c_1)\in \Sigma_j^{fin}$ a finite sequence with only two symbols. The choice of $\mu$ and the equation $r^{\tb}_{\ul{b}\ul{c}}=r^{\tb}_{\ul{b}}\cdot r^{\tb\ul{b}}_{\ul{c}}$ implies that
\[\log r^{\tb}_{\ul{b}\ul{c}}>\log r^{\tb}_{\ul{b}}-\mu.\]
This is saying that as the length of $\ul{b}$ increases the number $\log r^{\tb}_{\ul{b}}$ decreases by steps no bigger than $\mu$.

Now choose $c>0$ such that 
\[c^{-1}diam(G(\ul{a}^j))\leq diam(G^{\tb^j}(\ul{a}^j)) \leq c\, diam(G(\ul{a}^j)),\]
for $\ul{a}^j\in \Sigma_j^{fin}$, $\tb^j \in \Sigma_j^-$. Theses constants only depend on $K_1$,..., $K_n$. Fix $\tilde{c}_0>0$ such that
\begin{equation}\label{eq:c0}
2\log(c \tilde{c}_0)>\mu.
\end{equation}
We will use proposition \ref{fourier} with the following data
\begin{align*}
\Lambda&=\Sigma_1 (\tilde{c}_0,\rho)\times...\times \Sigma_n (\tilde{c}_0,\rho),\\
A&=\mathbb{A}_1\times ...\times \mathbb{A}_n,\\
\alpha(\ul{a}^1,...,\ul{a}^n)&=(a^1_0,...,a^n_0),\\
\omega(\ul{a}^1,...,\ul{a}^n)&=(a^1_{m_1},...,a^n_{m_n}),\\
a_{\lambda}^{\lambda'}&=(\log \frac{r_{\ul{b}^1}^{\tb^1}}{r_{\ul{b}^n}^{\tb^n}},...,\log \frac{r_{\ul{b}^{n-1}}^{\tb^{n-1}}}{r_{\ul{b}^n}^{\tb^n}},v_{\ul{b}^1}^{\tb^1},...,v_{\ul{b}^n}^{\tb^n}),
\end{align*}
where $\ul{a}^j=(a^j_0,...,a^j_{m_j})$, $\lambda=(\ul{a}^1,...,\ul{a}^n)$, $\lambda'=(\ul{b}^1,...,\ul{b}^n)$ and $\tb^j \in \Sigma_j^-$ finishes in $\ul{a}^j$, $1\leq j\leq n$.\footnote{For every $\ul{a}^j\in \Sigma_j^{fin}$ we choose, arbitrarily, an element $\tb^j\in \Sigma_j^-$ that ends in $\ul{a}^j$, using this we define $a_{\ld}^{\ld'}$.} Assume that the hypothesis of \ref{fourier} holds, namely that there exist $\Delta_0$, $k_0$ such that $\left\| T_{\xi}\right\|\leq k_0$ for all $|\xi|\in [1,\Delta_0\rho^{-1}]$, this will be verified in the next subsection. Applying the proposition in this setting gives constants $k_1$, $\tau$, $\epsilon$.

Now fix $k_4,k_5>0$ such that $k_1<k_4<k_5<1$ and $\delta>0$ such that
\begin{equation}\label{eq:delta}
k_4+2\cdot 3^n k_1^{-1} LC_2^{-1}C_4\delta<k_5,
\end{equation}
where $C_2>0$, $L>0$ are constants such that
\begin{align}
L^{-1}\rho^{-(d_1+...+d_n)}&\leq \# \Lambda_i \leq L \rho^{-(d_1+...+d_n)},\label{eq:L}\\
C_2\rho^{d_1+....+d_n}&\leq p^{\ld},\label{eq:c2}
\end{align}
for any $i\in A$, $\ld\in \Ld$, and $C_4>0$ is defined by Eq. (\ref{eq:C4}). All these constants depend only in $\tilde{c}_0$.

Fix $r>0$ such that 
\begin{equation}\label{eq:r}
2r>\delta^{-1}(9+\frac{1}{4})\log (c\tilde{c}_0).
\end{equation}
The choice of $\mu$ and $\tilde{c}_0$ allow us to find $\rho_{1}>0$, small enough, such that for any family of intervals $I_1,..., I_{n-1}$, with $diam(I_j)\geq 2 \log(c \tilde{c}_0)$, any $x=(t,v)\in J$ with $dist(t_j,I_j)\leq \delta^{-1}(9+\frac{1}{4})\log (c\tilde{c}_0)$ and any $\lambda\in \Lambda$, there exists $\lambda_0=(\ul{b}^1,...,\ul{b}^n) \in \Sigma_1^{fin}\times...\times \Sigma_n^{fin}$ with the property 
\[x+a_{\lambda}^{\lambda_0}\in I_1\times...\times I_{n-1}\times \mathbb{T}^{n},\]
and $diam (G({\ul{b}^j}))>\rho_1$, $1\leq j\leq n$.
%the idea to justify the existence of $\rho_1$ is the following: since $diam G(\ul{b}^j)>\rho_1$, then $\log r^{\tb^j}_{\\ul{b}^j}$ is going to be more or less a number in the interval $[\log \rho_1,0]$. Moreover, by the choice of $\mu$ we know that the possible values of $\log r^{\tb^j}_{\\ul{b}^j}$ traverse this interval in steps of size no bigger than $\mu$. We can then fix $\ul{b}^n$ such that $-\log r^{\tb^n}_{\ul{b}^n}$ has size more or less $-\frac{\log \rho_1}{2}$. Hence the numbers $\log \frac{r^{\tb^j}_{\ul{b}^j}}{r^{\tb^n}_{\ul{b}^n}}$ traverse the interval $[\frac{\log \rho_1}{2},-\frac{\log \rho_1}{2}]$ in steps no bigger than $\mu$. Choosing $\rho_1$ small such that $-\frac{\log \rho_1}{2}>\delta^{-1}(9+\frac{1}{4})\log (c\tilde{c}_0)$ and noting that $diam(I_j)>\mu$ we see that we can find the desired $\lambda_0$.
Choose $c_0>\tilde{c}_0$ big enough such that \footnote{If $\lambda=(\ul{a}^1,...,\ul{a}^n)$, $\lambda_0=(\ul{b}^1,....,\ul{b}^n)$, with $\omega(\ld_0)=\alpha(\ld)$, then $\ld_0\ld=(\ul{b}^1\ul{a}^1,...,\ul{b}^n\ul{a}^n)$.}
\[\lambda_0 \lambda\in \Sigma_1(c_0,\rho)\times...\times \Sigma_n (c_0,\rho)\]
for all $\lambda_0=(\ul{b}^1,...,\ul{b}^n)$, such that $diam (G({\ul{b}^j}))>\rho_1$, $1\leq j\leq n$, and any $\lambda\in \Lambda$ with $\omega(\lambda_0)=\alpha(\lambda)$.

We also fix a constant $\Delta_1>0$ that should be big enough to verify Eq. (\ref{eq:Delta}), this is a condition that only depends on $c_0$. Finally, we choose $\Delta$, $\epsilon_1$ as in corollary \ref{cor:general}.

Notice that $\Lambda'=\Sigma_1(c_0,\rho)\times...\times \Sigma_n(c_0,\rho)$ contains $\Lambda$. Choose a function $\varphi:\Lambda'\to\Lambda$ such that:
\begin{itemize}
\item[(a)] If $\ld=(\ul{a}^1,...,\ul{a}^n)\in \Lambda'$ and $\varphi(\ld)=(\ul{b}^1,...,\ul{b}^n)$ then either $\ul{a}^j$ ends with $\ul{b}^j$ or $\ul{b}^j$ ends with $\ul{a}^j$, for every $1\leq j\leq n$.
\item[(b)] $\varphi(\ld)=\ld$, $\forall \ld \in \Ld$.
\end{itemize}
Thanks to properties (a),(b) of $\varphi$ there are constants $T_1$, $T_2$ depending only in $c_0,\, \tilde{c}_0$, and not $\rho$, such that 
\[1\leq T_1\leq \#\varphi^{-1}(\lambda)\leq T_2, \forall \ld\in \Ld.\]
We show that we can suppose $F(\lambda)=F(\varphi(\lambda))$. Assume that for the given values of $c_0$ and $r$ there exist $c_1,c_2,c_3,\rho_0>0$ such that the scale recurrence lemma is verified in the special case when \[F(\lambda)=F(\varphi(\lambda)), \forall \ld \in \Ld'.\]
We find new values for $c_1,c_2,c_3,\rho_0>0$ verifying the lemma in the general case. In fact, we do not need to change $c_2,c_3,\rho_0>0$, just redefine $c_1$ as $c_1/T_2$. Given a family $\{F(\ld)\}_{\ld\in\Ld'}$ with $\nu(J_r\setminus F(\ld))\leq c_1/T_2$ consider 
\[\tilde{F}(\ld)=\bigcap_{\ld'\in\,\varphi^{-1}(\varphi(\ld))}F(\ld').\]
This new family verifies $\tilde{F}(\lambda)=\tilde{F}(\varphi(\lambda))$, moreover $\nu(J_r\setminus \tilde{F}(\ld))\leq c_1$, then there exists $\tilde{F}^*(\lambda)$ with the propierties of the scale recurrence lemma. Taking $F^*(\ld)=\tilde{F}^*(\ld)$ gives the lemma in the general case.

We will prove that for the scale recurrence lemma to hold it is enough to prove the following statement:

\begin{stm}\label{statement}
For the given values of $c_0$ and $r$, there exist $c_1,c_2,c_3,\rho_0>0$ with the following properties: given $0<\rho<\rho_0$, and a family $F(\ld)$ of subsets of $J_r$, $\ld\in \Ld=\Sigma_1(\tilde{c}_0,\rho)\times...\times \Sigma_n(\tilde{c}_0,\rho)$, such that
\[\nu(J_r\setminus F(\ld))\leq c_1, \forall \ld,\] 
there is another family $F^*(\ld)$ of subsets of $J_r$ satisfying:
\begin{itemize}
\item[(i)] For any $\ld \in \Ld$, $F^*(\ld)$ is contained in the $c_2 \rho$-neighborhood of $F(\ld)$.
\item[(ii)] Let $\ld=(\ul{a}^1,...,\ul{a}^n)\in \Ld$, $(t,v)\in F^*(\ld)$; there exist at least $c_3\rho ^{-(d_1+...+d_n)}$ elements $\ld'=(\ul{b}^1,....,\ul{b}^n)\in \Ld'$ (with $\ul{b}^j$ starting with the last letter of $\ul{a}^j$) such that, if $\tb^j \in \Sigma_j^{-}$, $1\leq j\leq n$, verify $\tb^j \wedge \ul{a}^j\in \Sigma_j(c_0,\rho)$, $1\leq j\leq n$ and
\[T_{\ul{b}^1,...,\ul{b}^n}(\tb^1,...,\tb^n,t,v)=(\tb^1\ul{b}^1,...,\tb^n\ul{b}^n,\tilde{t},\tilde{v})\]
the $\rho$-neighborhood of $(\tilde{t},\tilde{v})\in J$ is contained in $F^*(\varphi(\ld'))$.
\item[(iii)] $\nu(F^*(\ld))\geq \nu(J_r)/2$ for at least $T_2/(T_2+T_1)$ of the $\ld\in \Ld$.
\end{itemize}
\end{stm}

The difference between statement \ref{statement} and the scale recurrence lemma is that $\Ld$ is parametrizing the sets $F(\ld)$ instead of $\Ld'$, however $\Ld'$, which is much bigger than $\Ld$, still parametrizes the set of renormalization operators.\\
Let $\{F(\ld)\}_{\ld\in \Ld'}$ be a family of sets as in the scale recurrence lemma, we can suppose that $F(\ld)=F(\varphi(\ld))$. Now assume that statement \ref{statement} holds, then we can apply it to the restricted family $\{F(\ld)\}_{\ld\in \Ld}$, this produces another family $\{F^*(\ld)\}_{\ld\in \Ld}$. We extend it to $\ld\in \Ld'$ by $F^*(\ld)=F^*(\varphi(\ld))$, it is easily seen that $\{F^*(\ld)\}_{\ld\in \Ld'}$ verifies the desired properties:
\begin{itemize}
\item[(i)] $F^*(\ld)=F^*(\varphi(\ld))\subset V_{c_2\rho}(F(\varphi(\ld)))=V_{c_2\rho}(F(\ld))$.
\item[(ii)] If $\ld=(\ul{a}^1,...,\ul{a}^n)\in \Ld'$, $(t,v)\in F^*(\ld)=F^*(\varphi(\ld))$, $\varphi(\ld)=(\ul{c}^1,...,\ul{c}^n)$, then there exist at least $c_3\rho ^{-(d_1+...+d_n)}$ elements $\ld'=(\ul{b}^1,...,\ul{b}^n)\in \Ld'$ such that for \[T_{\ul{b}^1,...,\ul{b}^n}(\tb^1,...,\tb^n,t,v)=(\tb^1\ul{b}^1,...,\tb^n\ul{b}^n,\tilde{t},\tilde{v})\]
the $\rho$-neighborhood of $(\tilde{t},\tilde{v})\in J$ is contained in $F^*(\varphi(\ld'))=F^*(\ld')$, for any $\tb^j \in \Sigma_j^{-}$, $1\leq j\leq n$, verifying $\tb^j \wedge \ul{c}^j\in \Sigma_j(c_0,\rho)$, $1\leq j\leq n$; in particular for any $\tb^1,...,\tb^n$ ending in $\ul{a}^1,...,\ul{a}^n$, respectively.
\item[(iii)] Let $\Ld_1\subset \Ld$ the set of $\ld$ such that $\nu(F^*(\ld))\geq \nu(J_r)/2$, we know that 
\[\frac{\#\Ld_1}{\#\Ld}\geq \frac{T_2}{T_2+T_1}.\]
Let $A=\#\varphi^{-1}(\Ld_1)$, $B=\#\varphi^{-1}(\Ld\setminus \Ld_1)$, then
\[\frac{B}{A}\leq \frac{\#(\Ld\setminus \Ld_1)\cdot T_2}{\# \Ld_1 \cdot T_1} \leq \frac{(1-T_2/(T_2+T_1))\cdot \#\Ld \cdot T_2}{(T_2/(T_2+T_1)) \cdot \#\Ld \cdot T_1}=1,\]
and from this we get that
\[\frac{A}{A+B}=\frac{1}{1+(B/A)}\geq \frac{1}{2}\]
of the $\ld \in \Ld'$ verifies $\nu(F^*(\ld))\geq \nu(J_r)/2$.
\end{itemize}
From now on we will focus in the proof of statement \ref{statement}.\\

Suppose we have a family of sets $\{F(\ld)\}_{\ld \in \Ld}$, define
\[E_0(\ld)=J_r\setminus V_{\Delta\rho}(F(\ld)), \ld\in \Ld.\]
Now we define recursively two families of sets $\{E_m(\ld)\}_{\ld\in \Ld}$ and $\{\tilde{E}_m(\ld)\}_{\ld\in \Ld}$. The set $\tilde{E}_m(\ld)$ is defined as the $x\in J_r$ such that\footnote{When $\ld'\in \Ld'\setminus\Ld$ the element $a_{\ld}^{\ld'}$ is defined in the same way as when $\ld,\ld' \in \Ld$.}
\[\#\{\ld'\in \Ld':\alpha(\ld')=\omega(\ld);\,\,B_{\Delta_1\rho}(x+a_{\lambda}^{\lambda'})\subset J_r\setminus E_m(\varphi(\ld'))\}\leq c_3\rho^{-(d_1+...+d_n)},\]
and $E_{m+1}(\ld)=E_0(\ld)\cup \tilde{E}_m(\ld)$.\footnote{In fact, since $\tilde{E}_j(\ld) \subset \tilde{E}_{j+1}(\ld)$, we have  $E_m(\ld)=E_0(\ld)\cup \tilde{E}_0(\ld)\cup...\cup \tilde{E}_{m-1}(\ld)$.} The value of $c_3$ will be fixed during the proof of the next lemma. Note that for $x\in\tilde{E}_0(\ld)$ one has
\[\{\ld'\in \Ld':\,B_{\Delta_1\rho}(x+a_{\lambda}^{\lambda'})\subset J_r\setminus E_1(\varphi(\ld'))\} \subset \{\ld'\in \Ld':\,B_{\Delta_1\rho}(x+a_{\lambda}^{\lambda'})\subset J_r \setminus E_0(\varphi(\ld'))\},\]
hence $\tilde{E}_0(\ld)\subset \tilde{E}_1(\ld)$. Analogously one proves that $\tilde{E}_m(\ld)\subset \tilde{E}_{m+1}(\ld)$, $E_m(\ld)\subset E_{m+1}(\ld)$ for all $m$.
\begin{lem}\label{lem:en}
If $c_1$, $c_3$, $\rho_0$ are sufficiently small then
\begin{align}
\sum_{\ld \in \Ld}p^{\ld}\nu(V_{\Delta \rho}(\tilde{E}_m(\ld)))&\leq k_5 \sum_{\ld \in \Ld}p^{\ld}\nu(V_{\Delta \rho}(E_m(\ld))), \label{eq:tilde1} \\
\sum_{\ld \in \Ld}p^{\ld}\nu(V_{\Delta \rho}(E_m(\ld))) &\leq \frac{\nu(J_{r+\Delta \rho}\setminus J_r)+c_1}{1-k_5} \label{eq:tilde2}
\end{align}
\end{lem}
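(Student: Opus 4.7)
The plan is to prove $(1)_m$ and $(2)_m$ simultaneously by induction on $m$, with $(2)_m$ together with $(1)_m$ yielding $(2)_{m+1}$ via a routine estimate, and $(2)_m$ yielding $(1)_m$ as the substantive step, carried out by applying Corollary \ref{cor:general} to the family $\{E_m(\eta)\}_{\eta\in\Lambda}$.

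First I would verify the base case and propagate $(2)$. Since $E_0(\lambda)=J_r\setminus V_{\Delta\rho}(F(\lambda))$, any $y\in V_{\Delta\rho}(E_0(\lambda))$ lies in $J_{r+\Delta\rho}$ and outside $F(\lambda)$, giving $\nu(V_{\Delta\rho}(E_0(\lambda)))\le \nu(J_{r+\Delta\rho}\setminus J_r)+c_1$. Because $E_{m+1}(\lambda)=E_0(\lambda)\cup\tilde E_m(\lambda)$, summing with weights $p^\lambda$ and chaining $(1)_m$ with $(2)_m$ yields
\[\sum_\lambda p^\lambda\nu(V_{\Delta\rho}(E_{m+1}(\lambda)))\le \nu(J_{r+\Delta\rho}\setminus J_r)+c_1+k_5\cdot\frac{\nu(J_{r+\Delta\rho}\setminus J_r)+c_1}{1-k_5}=\frac{\nu(J_{r+\Delta\rho}\setminus J_r)+c_1}{1-k_5},\]
which is exactly $(2)_{m+1}$. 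The base $(2)_0$ reduces to the $E_0$ estimate since $\tilde E_{-1}$ is vacuous.

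The substantive step is $(2)_m\Rightarrow(1)_m$. I would apply Corollary \ref{cor:general} to $\{E_m(\eta)\}_{\eta\in\Lambda}$ to obtain sets $\hat E(\eta)$ with
\[\sum_\eta p^\eta \nu(V_{\Delta\rho}(\hat E(\eta)))\le k_4\sum_\eta p^\eta\nu(V_{\Delta\rho}(E_m(\eta))).\]
The individual smallness $\nu(V_{\Delta\rho}(E_m(\eta)))\le\epsilon_1$ demanded by the corollary is ensured by picking $c_1$ and $\rho_0$ small enough so that $(2)_m$ combined with a Markov-type argument forces all but a $p$-negligible subset of indices to satisfy the pointwise bound (the exceptional ones being handled separately). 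The task then reduces to embedding $V_{\Delta\rho}(\tilde E_m(\lambda))$ into $V_{\Delta\rho}(\hat E(\lambda))$ modulo a controlled extra contribution.

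The main obstacle is precisely bridging the two structural mismatches between $\tilde E_m$ and $\hat E$. First, $\tilde E_m(\lambda)$ enumerates $\lambda'\in\Lambda'$ through $\varphi$, whereas $\hat E$ enumerates $\eta'\in\Lambda$ directly; the discrepancy $|a_\lambda^{\lambda'}-a_\lambda^{\varphi(\lambda')}|$ is uniformly bounded by the scale-step constant coming from (\ref{eq:c0}), and is absorbed by choosing $\Delta_1$ large enough (condition (\ref{eq:Delta})), while the fiber bound $|\varphi^{-1}(\eta')|\le T_2$ converts counts over $\Lambda'$ into counts over $\Lambda$. Second, $\tilde E_m(\lambda)$ requires the ball $B_{\Delta_1\rho}(x+a_\lambda^{\lambda'})$ \emph{not to be contained} in $J_r\setminus E_m(\varphi(\lambda'))$, which splits into two cases: intersection with $E_m(\varphi(\lambda'))$, which feeds directly into $\hat E(\lambda)$ after the first bridge, and protrusion out of $J_r$, which forces $x+a_\lambda^{\lambda'}$ into a $\Delta_1\rho$-collar of $\partial J_r$. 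The total $p$-weighted mass of the protrusion term is estimated by a Fubini-style volume argument using the margin parameter $\delta$, yielding a contribution bounded by $2\cdot 3^n k_1^{-1}LC_2^{-1}C_4\delta\cdot\sum_\lambda p^\lambda\nu(V_{\Delta\rho}(E_m(\lambda)))$; adding this to the $k_4$-bound from the corollary reproduces the constant $k_5$ built into (\ref{eq:delta}), completing the proof of $(1)_m$.
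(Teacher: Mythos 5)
Your overall architecture matches the paper's: the induction scheme $(2)_m\Rightarrow(1)_m\Rightarrow(2)_{m+1}$, the base case via $V_{\Delta\rho}(E_0(\lambda))\subset J_{r+\Delta\rho}\setminus F(\lambda)$, the truncation of the family to the indices where $\nu(V_{\Delta\rho}(E_m(\lambda)))<\epsilon_1$ (the exceptional indices being counted via a Markov bound and absorbed into the choice of $c_3$ and $\epsilon_2$), and the application of Corollary \ref{cor:general} away from the boundary of $J_r$.

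However, your treatment of the ``protrusion'' term has a genuine gap. You claim the contribution of points $x$ for which many balls $B_{\Delta_1\rho}(x+a_\lambda^{\lambda'})$ stick out of $J_r$ is controlled by ``a Fubini-style volume argument'' because the translates land in a thin collar of $\partial J_r$. A volume argument of that kind can only produce an \emph{absolute} bound (the measure of the collar, or of the $2\log(c\tilde{c}_0)$-neighbourhood of $\partial J_r$ where such $x$ must lie), whereas inequality (\ref{eq:tilde1}) requires a bound that is a small \emph{multiple of} $\sum_\lambda p^\lambda\nu(V_{\Delta\rho}(E_m(\lambda)))$ --- a quantity that can be arbitrarily small, or zero. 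The mechanism that actually produces the multiplicative bound is the re-centering step: for $y\in\tilde{E}_m(\lambda)$ outside $J_{r-2\log(c\tilde{c}_0)-\Delta_1\rho}$ one uses the auxiliary words $\lambda_{A,B}$ (this is what the choice of $\rho_1$ and the enlargement from $\tilde{c}_0$ to $c_0$ were set up for) to move $y+a_\lambda^{\lambda_{A,B}}$ into a slab $\tilde{J}_{A,B}$ well inside $J_r$; since $y\in\tilde{E}_m(\lambda)$ and the balls $B_{\Delta_1\rho}(y+a_\lambda^{\lambda_{A,B}\lambda'})$ no longer protrude, at least $(L^{-1}-c_3)\rho^{-(d_1+\cdots+d_n)}$ of them must genuinely meet $E_m(\varphi(\lambda_{A,B}\lambda'))\cap J_{A,B}$. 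Chebyshev's inequality then bounds the measure of this boundary part of $\tilde{E}_m(\lambda)$ by a constant times $\rho^{d_1+\cdots+d_n}\sum_{(A,B)}\sum_{\lambda'}\nu(V_{(\Delta+\Delta_1)\rho}(E_m(\cdot)\cap J_{A,B}))$, and only \emph{then} does the pigeonhole selection of the intervals $I_j^{\pm}$ (using $2r>\delta^{-1}(9+\frac{1}{4})\log(c\tilde{c}_0)$) convert this into $\delta\sum_\lambda p^\lambda\nu(V_{\Delta\rho}(E_m(\lambda)))$. Your sketch quotes the resulting constant $2\cdot3^{n}k_1^{-1}LC_2^{-1}C_4\delta$ but supplies no route to it: without the $\lambda_{A,B}$ re-centering and the counting step, the protrusion term cannot be made proportional to the right-hand side of (\ref{eq:tilde1}), and the induction does not close.
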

Before proving the lemma we will use it to prove statement \ref{statement}. Consider $E_{\infty}(\ld)=\bigcup_{m\geq 0}E_m(\ld)$, thanks to Eq. (\ref{eq:tilde2}) we have
\begin{equation}\label{eq:einfty}\sum_{\ld \in \Ld}p^{\ld}\nu(V_{\Delta \rho}(E_{\infty}(\ld))) \leq \frac{\nu(J_{r+\Delta \rho}\setminus J_r)+c_1}{1-k_5}.\end{equation}
Now define $F^*(\ld)=J_r\setminus E_{\infty}(\ld)$, we will prove that this family of sets has the desired properties.
\begin{itemize}
\item[(i)] Since $E_0(\ld)\subset E_{\infty}(\ld)$ then $F^*(\ld)\subset J_r\setminus E_0(\ld)=V_{\Delta \rho}(F(\ld))$, choosing $c_2=\Delta$ gives the first property.
\item[(ii)] Let $x\in F^*(\ld)$, then $x\notin \tilde{E}_m(\ld)$ and the set
\[A_m=\{\ld'\in \Ld': \alpha(\ld')=\omega(\ld);\,\,B_{\Delta_1 \rho}(x+a_{\ld}^{\ld'})\subset J_r\setminus E_m(\varphi(\ld'))\}\]
has more than $c_3\rho^{-(d_1+...+d_n)}$ elements, for all $m$. Moreover, since $E_m(\varphi(\ld'))\subset E_{m+1}(\varphi(\ld'))$ one has $A_{m+1}\subset A_m$ and then \[\#\left(\bigcap_{m\geq 0}A_m\right)\geq c_3\rho^{-(d_1+...+d_n)}.\]
Therefore 
\[\#\{\ld'\in \Ld': \alpha(\ld')=\omega(\ld);\,\,B_{\Delta_1 \rho}(x+a_{\ld}^{\ld'})\subset F^*(\varphi(\ld'))\}\geq c_3\rho^{-(d_1+...+d_n)}.\]
To finish, it is enough to prove that for any $\tilde{\tb}^1$,..., $\tilde{\tb}^n$ such that $\tilde{\tb}^j\wedge \ul{a}^j\in \Sigma_j(c_0,\rho)$, $1\leq j\leq n$, one has
\[B_{\rho}(x+(\log \frac{r_{\ul{b}^1}^{\tilde{\tb}^1}}{r_{\ul{b}^n}^{\tilde{\tb}^n}},...,\log \frac{r_{\ul{b}^{n-1}}^{\tilde{\tb}^{n-1}}}{r_{\ul{b}^n}^{\tilde{\tb}^n}},v_{\ul{b}^1}^{\tilde{\tb}^1},...,v_{\ul{b}^n}^{\tilde{\tb}^n}))\subset B_{\Delta_1 \rho}(x+a_{\ld}^{\ld'}),\]
where $\ld'=(\ul{b}^1,..,\ul{b}^n)$, $\ld=(\ul{a}^1,..,\ul{a}^n)$, $a_{\ld}^{\ld'}=(\log \frac{r_{\ul{b}^1}^{\tb^1}}{r_{\ul{b}^n}^{\tb^n}},..,\log \frac{r_{\ul{b}^{n-1}}^{\tb^{n-1}}}{r_{\ul{b}^n}^{\tb^n}},v_{\ul{b}^1}^{\tb^1},..,v_{\ul{b}^n}^{\tb^n})$, for some $\tb^j$ ending in $\ul{a}^j$, $1\leq j\leq n$. This is accomplished by taking $\Delta_1$ big. More precisely, since
\[ \|D(k^{\tb^j}\circ (k^{\tilde{\tb}^j})^{-1})(z)-I\|\leq C diam (G(\tb^j\wedge \tilde{\tb}^j))\leq \tilde{C}\rho,\] 
for all $1\leq j\leq n$, for some constant $\tilde{C}$ only depending on $c_0$, we can conclude that 
\[\left| \log r_{\ul{b}}^{\tb^j}-\log r_{\ul{b}}^{\tilde{\tb}^j}\right|\leq \tilde{C}_1 \rho,\,\,\,\left| v_{\ul{b}}^{\tb^j}-v_{\ul{b}}^{\tilde{\tb}^j}\right|\leq \tilde{C}_1 \rho,\]
for all $1\leq j\leq n$, for some constant $\tilde{C}_1$ only depending on $c_0$. Therefore, imposing
\begin{equation}\label{eq:Delta}
1+2(2n-1)\tilde{C}_1<\Delta_1
\end{equation}
would be sufficient to guarantee the second property.
\item[(iii)] By Eq. (\ref{eq:einfty}), choosing $c_1$, $\rho_0$ small such that 
\[\frac{\nu(J_{r+\Delta \rho}\setminus J_r)+c_1}{1-k_5}<\frac{C_1T_1}{2(T_1+T_2)}\nu(J_r),\]
where $C_1$ is a constant such that $p^{\ld}\geq C_1 (\#\Ld)^{-1}$, $\forall \ld \in \Ld$, implies that
\[\sum_{\ld \in\Ld} p^{\ld}\nu(F^*(\ld))\geq \left(1-\frac{C_1T_1}{2(T_1+T_2)}\right)\nu(J_r).\]
Let $A=\{\ld : \nu(F^*(\ld))\geq \nu(J_r)/2\}$, hence
\begin{align*}
\left(1-\frac{C_1T_1}{2(T_1+T_2)}\right)\nu(J_r) &\leq \sum_{\ld \in A} p^{\ld}\nu(F^*(\ld)) + \sum_{\ld \in \Ld\setminus A} p^{\ld}\nu(F^*(\ld))\\
&\leq \nu(J_r) \sum_{\ld \in A} p^{\ld} + \frac{\nu(J_r)}{2} \sum_{\ld \in \Ld\setminus A} p^{\ld}\\
&= \nu(J_r)-\frac{\nu(J_r)}{2} \sum_{\ld \in \Ld\setminus A} p^{\ld}.
\end{align*}
From this inequality we get
\[\frac{C_1}{2}\frac{\#(\Ld\setminus A)}{\#\Ld}\leq \frac{1}{2} \sum_{\ld \in \Ld\setminus A} p^{\ld} \leq \frac{C_1T_1}{2(T_1+T_2)}.\]
Finally this implies $\# A>(T_2/(T_1+T_2))\#\Ld$, as we wanted.
\end{itemize}
\begin{proof}[Proof of lemma \ref{lem:en}:]
Choose $c_3>0$, $\epsilon_2>0$ small such that
\begin{equation}\label{eq:epsilon2}
\left( c_3+ C_2^{-1}\frac{\epsilon_2}{\epsilon_1}\right)\rho^{-(d_1+...+d_n)}<(1-\tau)N_{\omega(\ld)},\,\,\forall \ld \in \Ld,
\end{equation}
where $C_2$ is a constant such that $p^{\ld}\geq C_2 \rho^{d_1+...+d_n}$, $\forall \ld \in \Ld$. We suppose that $c_1$, $\rho_0$ are small enough such that
\begin{equation}\label{eq:nose}
\frac{\nu(J_{r+\Delta \rho}\setminus J_r)+c_1}{1-k_5}<\epsilon_2.
\end{equation}
We will proceed by induction following the scheme 
\begin{center}
\textit{Eq. (\ref{eq:tilde2}) for $m$ $\Rightarrow$ Eq. (\ref{eq:tilde1}) for $m$ $\Rightarrow$ Eq. (\ref{eq:tilde2}) for $m+1$.}
\end{center}
For the base case, Eq. (\ref{eq:tilde2}) for $m=0$, notice that
\[V_{\Delta \rho}(E_0(\ld))\subset J_{r+\Delta \rho}\setminus F(\ld).\]
Therefore
\[\nu(V_{\Delta \rho}(E_0(\ld))) \leq \nu(J_{r+\Delta \rho}\setminus J_r)+\nu(J_r \setminus F(\lambda))\leq \frac{\nu(J_{r+\Delta \rho}\setminus J_r)+c_1}{1-k_5}.\]
Now we prove \textit{"Eq. (\ref{eq:tilde2}) for $m$ $\Rightarrow$ Eq. (\ref{eq:tilde1}) for $m$"}: define $\Ld_b=\{\ld\in \Ld: \nu(V_{\Delta \rho}(E_m(\ld)))<\epsilon_1\}$ and 
\[ A(\lambda)=\begin{cases}
		E_m(\ld)  & \mbox{if } \ld \in \Ld_b \\
		\emptyset & \mbox{otherwise}
	\end{cases}
\]
We will show that
\[\tilde{E}_m(\ld)\cap J_{r-2\log (c\tilde{c}_0)-\Delta_1\rho}\subset \hat{A}(\ld).\]
Here $\hat{A}(\ld)$ is the set form corollary \ref{cor:general}, i.e. the $x\in J$ such that
\[\frac{1}{\# \Lambda_{\omega(\lambda)}}\cdot \#\{\lambda'\in \Lambda_{\omega(\lambda)}:B_{\Delta_1 \rho}(x+a_{\lambda}^{\lambda'})\subset J\setminus A(\lambda') \} \leq 1-\tau.\]
Using Eq. (\ref{eq:nose})
\[\epsilon_2>\sum_{\ld \in \Ld}p^{\ld}\nu(V_{\Delta \rho}(E_m(\ld)))\geq C_2 \rho^{d_1+...+d_n}\epsilon_1 \#(\Ld\setminus\Ld_b),\]
hence
\[\#(\Ld\setminus\Ld_b)\leq C_2^{-1}\frac{\epsilon_2}{\epsilon_1}\rho^{-(d_1+...+d_n)}.\]
Let $x\in \tilde{E}_m(\ld)\cap J_{r-2\log (c\tilde{c}_0)-\Delta_1\rho}$, using the last inequality, Eq. (\ref{eq:epsilon2}) and the fact that $a_{\ld}^{\ld'}\in J_{2\log (c \tilde{c}_0)}$, $\forall \ld, \ld'\in \Ld$, we have
\begin{align*}
\#\{\lambda'\in \Ld_{\omega(\ld)}:B_{\Delta_1 \rho}(x+a_{\lambda}^{\lambda'})&\subset J\setminus A(\lambda') \} \\ 
&\leq \#\{\lambda'\in \Lambda_b:B_{\Delta_1 \rho}(x+a_{\lambda}^{\lambda'})\subset J\setminus E_m(\varphi(\lambda')) \}\\ &\,\,\,\,\,+ \#(\Ld\setminus \Ld_b)\\
&\leq \#\{\lambda'\in \Lambda':B_{\Delta_1 \rho}(x+a_{\lambda}^{\lambda'})\subset J_r\setminus E_m(\varphi(\lambda'))\}\\
&\,\,\,\,+C_2^{-1}\frac{\epsilon_2}{\epsilon_1}\rho^{-(d_1+...+d_n)}\\
&\leq (c_3+C_2^{-1}\frac{\epsilon_2}{\epsilon_1})\rho^{-(d_1+...+d_n)}\\
&<(1-\tau)N_{\omega(\ld)}.
\end{align*}
Hence $x\in \hat{A}(\ld)$, and we have shown
\[\tilde{E}_m(\ld)\cap J_{r-2\log (c\tilde{c}_0)-\Delta_1\rho}\subset \hat{A}(\ld).\]
Clearly the family $A(\ld)$ satifies the hypothesis of corollary \ref{cor:general}, i.e. $\nu(V_{\Delta \rho}(A(\ld)))<\epsilon_1$. Using the corollary gives
\begin{align}
\sum_{\ld\in\Ld}p^{\ld}\nu(V_{\Delta\rho}(\tilde{E}_m(\ld)\cap J_{r-2\log (c\tilde{c}_0)-\Delta_1\rho}))&\leq \sum_{\ld\in\Ld}p^{\ld}\nu(V_{\Delta\rho}(\hat{A}(\ld)))\nonumber \\
&\leq k_4 \sum_{\ld\in\Ld}p^{\ld}\nu(V_{\Delta\rho}(A(\ld)))\nonumber \\
&\leq k_4\sum_{\ld\in\Ld}p^{\ld}\nu(V_{\Delta\rho}(E_m(\ld))). \label{eq:enintervalo}
\end{align}
Now we estimate $\tilde{E}_m(\ld)$ outside of $J_{r-2\log (c\tilde{c}_0)-\Delta_1\rho}$. Assume that $\rho_0$ is small enough such that $2\Delta \rho<\frac{1}{4}\log (c\tilde{c}_0)$, $\Delta_1 \rho<\frac{1}{4}\log (c\tilde{c}_0)$. By Eq. (\ref{eq:r}), for every $j=1,...,n$, there exist intervals
\begin{align*}
I_j^- &\subset [-r,-r+\delta^{-1}(9\log (c\tilde{c}_0)+2\Delta \rho)]\subset [-r,r],\\
I_j^+ &\subset [r-\delta^{-1}(9\log (c\tilde{c}_0)+2\Delta \rho),r]\subset [-r,r],
\end{align*}
with $diam(I_j^-)=diam(I_j^+)=9\log (c\tilde{c}_0)$ such that
\begin{align*}
    \sum_{\ld\in \Ld}p^{\ld}\nu(V_{\Delta \rho}(E_m(\ld)\cap J^-_j))&\leq \delta \sum_{\ld\in \Ld}p^{\ld}\nu(V_{\Delta \rho}(E_m(\ld))),\\
    \sum_{\ld\in \Ld}p^{\ld}\nu(V_{\Delta \rho}(E_m(\ld)\cap J^+_j))&\leq \delta \sum_{\ld\in \Ld}p^{\ld}\nu(V_{\Delta \rho}(E_m(\ld))),
\end{align*}
where $J^{+(-)}_j=\{(t_1,...,t_{n-1},v)\in J_r:t_j\in I^{+(-)}_j\}$.

For every pair $(A,B)$ such that $A,B \subset \{1,...,n-1\}$ and $A\cap B=\emptyset$ consider $y_{A,B}=(t_1(A,B),...,t_{n-1}(A,B),0)\in J$ given by
\[ t_j(A,B)=
	\begin{cases}
		-r+\log(c\tilde{c}_0)  & \mbox{if } j \in A, \\
		r-\log(c\tilde{c}_0)  & \mbox{if } j \in B, \\
		0 & \mbox{otherwise}.
	\end{cases}
\]
We also define sets $\tilde{J}_{A,B}, J_{A,B} \subset J$ in the following way:
\begin{align*}
J_{A,B}=\{(t_1,...,t_{n-1},v)\in J_{r}:t_j\in I^{+}_j\,& \mbox{if } j\in B, t_j\in I^{-}_j\, \mbox{if } j\in A\},& \\ 
\tilde{J}_{A,B}=\{(t_1,...,t_{n-1},v)\in J_r:t_j\in \tilde{I}^{+}_j\,& \mbox{if } j\in B, t_j\in \tilde{I}^{-}_j\, \mbox{if } j\in A\},
\end{align*}
where the interval $\tilde{I}^{u}_j$ has the same center as $I^{u}_j$ and has length $2\log(c\tilde{c}_0)$, here $u=+$ or $-$.

Fix $\ld\in \Ld$, from the choice of $\rho_1$ and $c_0$ we know that there is $\ld_{A,B}\in \Sigma_1^{fin}\times...\times \Sigma_n^{fin}$ such that
\[y_{A,B}+a_{\ld}^{\ld_{A,B}}\in \tilde{J}_{A,B},\]
and $\ld_{A,B}\ld'\in \Ld'$, $\forall \ld'\in \Ld_{\omega(\ld_{A,B})}$. Let
\[x\in V_{\Delta\rho}(\tilde{E}_m(\ld)\cap J_r\setminus J_{r-2\log (c\tilde{c}_0)-\Delta_1 \rho}),\]
then there is $y\in \tilde{E}_m(\ld)\cap J_r\setminus J_{r-2\log (c\tilde{c}_0)-\Delta_1 \rho}$, $y\in B_{\Delta \rho}(x)$, and for $y$ we have
\[\#\{\ld'\in\Ld': B_{\Delta_1\rho}(y+a_{\ld}^{\ld'})\subset J_r\setminus E_m(\varphi(\ld'))\}<c_3\rho^{-(d_1+...+d_n)}.\]
Write $y=(t_1,...,t_{n-1},v)$ and consider the sets
\begin{align*}
    A&=\{j\in [1,n-1]\cap \Z: t_j<-r+2\log (c\tilde{c}_0)+\Delta_1 \rho\},\\
    B&=\{j\in [1,n-1]\cap \Z: t_j>r-2\log (c\tilde{c}_0)-\Delta_1 \rho\}.
\end{align*}
Since $y\notin J_{r-2\log (c\tilde{c}_0)-\Delta_1 \rho}$ we know that $A\cup B\neq \emptyset$ and we can consider $\lambda_{A,B}$. Given that $\#\Ld_{\omega(\ld_{A,B})}\geq L^{-1}\rho^{-(d_1+...+d_n)}$, we conclude that
\begin{align*}
\#\{\ld'\in\Ld: B_{\Delta_1\rho}(y+a_{\ld}^{\ld_{A,B}\ld'})\cap E_m(\varphi(\ld_{A,B}\ld'))\neq \emptyset\}&\geq (L^{-1}-c_3)\rho^{-(d_1+...+d_n)}\\
&>\frac{L^{-1}}{2}\rho^{-(d_1+...+d_n)},
\end{align*}
here we are assuming that $c_3<L^{-1}/2$. Notice that $B_{\Delta_1\rho}(y+a_{\ld}^{\ld_{A,B}\ld'})\cap J_r\subset J_{A,B}$, $\forall \ld'\in \Ld_{\omega(\ld_0)}$, therefore
\[\#\{\ld'\in\Ld_{\omega(\ld_{A,B})}:x+a_{\ld}^{\ld_{A,B}\ld'}\in V_{\Delta\rho+\Delta_1\rho}(E_m(\varphi(\ld_{A,B}\ld'))\cap J_{A,B})\}>\frac{L^{-1}}{2}\rho^{-(d_1+...+d_n)}.\]
Hence $V_{\Delta\rho}(\tilde{E}_n(\ld)\cap J_r\setminus J_{r-2\log (c\tilde{c}_0)-\Delta_1 \rho})$ is contained in 
\[ \{x: \sum_{(A,B)} \sum_{\ld'\in \Ld} 1_{V_{\Delta\rho+\Delta_1\rho}(E_m(\varphi(\ld_{A,B}\ld'))\cap J_{A,B})-a_{\ld}^{\ld_{A,B}\ld'}}(x)>\frac{L^{-1}}{2}\rho^{-(d_1+...+d_n)}\},\]
where the first sum is over all pairs $(A,B)$ such that $A,B\subset \{1,...,n\}$, $A\cap B=\emptyset$, $A\cup B\neq \emptyset$. Now using Chebyshev's inequality we get
\begin{align*}
\nu(V_{\Delta\rho}&(\tilde{E}_m(\ld)\cap J_r\setminus J_{r-2\log (c\tilde{c}_0)-\Delta_1 \rho}))\\&\leq \frac{1}{(L^{-1}/2)\rho^{-(d_1+...+d_n)}}\sum_{(A,B)}\sum_{\ld'\in \Ld}\nu(V_{\Delta\rho+\Delta_1\rho}(E_m(\varphi(\ld_{A,B}\ld'))\cap J_{A,B}))\\
&\leq 2\left(1+\frac{\Delta_1}{\Delta}\right)^{2n-1}L\rho^{d_1+...+d_n}\sum_{(A,B)}\sum_{\ld'\in \Ld}\nu(V_{\Delta\rho}(E_m(\varphi(\ld_{A,B}\ld'))\cap J_{A,B}))\\
&\leq 2\left(1+\frac{\Delta_1}{\Delta}\right)^{2n-1} C_4L\rho^{d_1+...+d_n}\sum_{(A,B)} \sum_{\ld'\in \Ld}\nu(V_{\Delta\rho}(E_m(\ld')\cap J_{A,B})).
\end{align*}
Where $C_4>0$ is a constant, only depending on $\tilde{c}_0$, such that
\begin{equation}\label{eq:C4}
\#\{\ld'\in \Ld_{\omega(\ld_{A,B})}:\varphi(\ld_{A,B}\ld')=\ld_1\}<C_4, \forall \ld_1 \in \Ld.
\end{equation}
Now we will sum over $\ld$. By the definition of $\Delta$ we know that 
\[\left(1+\frac{\Delta_1}{\Delta}\right)^{2n-1}<k_1^{-1},\]
and using $p^{\ld'}\geq C_2\rho^{d_1+...+d_n}$ we get
\begin{align*}
\sum_{\ld\in \Ld}p^{\ld}\nu(V_{\Delta\rho}&(\tilde{E}_m(\ld)\cap J_r\setminus J_{r-2\log (c\tilde{c}_0)-\Delta_1 \rho}))\\&\leq 2\left(1+\frac{\Delta_1}{\Delta}\right)^{2n-1} C_4 L\rho^{d_1+..+d_n}\sum_{(A,B)} \sum_{\ld'\in \Ld}\nu(V_{\Delta\rho}(E_m(\ld')\cap J_{A,B}))\\&\leq 2k_1^{-1}C_4L\rho^{d_1+..+d_n}\frac{1}{C_2\rho^{d_1+..+d'_n}}\sum_{(A,B)} \sum_{\ld'\in \Ld}p^{\ld'}\nu(V_{\Delta\rho}(E_m(\ld')\cap J_{A,B}))\\&\leq 2\cdot 3^n k_1^{-1} LC_2^{-1}C_4\delta \sum_{\ld\in \Ld}p^{\ld}\nu(V_{\Delta\rho}(E_m(\ld))).
\end{align*}
Putting this inequality together with (\ref{eq:enintervalo}) gives
\begin{align*}
\sum_{\ld\in \Ld}p^{\ld}\nu(V_{\Delta\rho}(\tilde{E}_m(\ld)))&\leq (k_4+2\cdot 3^n k_1^{-1} LC_2^{-1}C_4\delta)\sum_{\ld\in \Ld}p^{\ld}\nu(V_{\Delta\rho}(E_m(\ld)))\\
&\leq k_5 \sum_{\ld\in \Ld}p^{\ld}\nu(V_{\Delta\rho}(E_m(\ld))),
\end{align*}
this is Eq. (\ref{eq:tilde1}) for $m$. Here we have used eq. (\ref{eq:delta}) where $\delta$ was chosen.

To finish, we prove \textit{"Eq. (\ref{eq:tilde2}) for $m$ and Eq. (\ref{eq:tilde1}) for $m$ $\Rightarrow$ Eq. (\ref{eq:tilde2}) for $m+1$"}. Since $E_{m+1}(\ld)=E_0(\ld)\cup \tilde{E}_m(\ld)$ and $\nu(V_{\Delta\rho}(E_0(\ld)))\leq \nu(J_{r+\Delta \rho}\setminus J_r)+c_1$ we get
\begin{align*}
\sum_{\ld\in\Ld}p^{\ld}\nu(V_{\Delta\rho}(E_{m+1}(\ld)))&\leq \sum_{\ld\in\Ld}p^{\ld}\nu(V_{\Delta\rho}(E_0(\ld)))+\sum_{\ld\in\Ld}p^{\ld}\nu(V_{\Delta\rho}(\tilde{E}_{m}(\ld)))\\
&\leq \nu(J_{r+\Delta \rho}\setminus J_r)+c_1+k_5\sum_{\ld\in\Ld}p^{\ld}\nu(V_{\Delta\rho}(E_{m}(\ld)))\\
&\leq \nu(J_{r+\Delta \rho}\setminus J_r)+c_1+k_5\frac{\nu(J_{r+\Delta \rho}\setminus J_r)+c_1}{1-k_5}\\ 
&=\frac{\nu(J_{r+\Delta \rho}\setminus J_r)+c_1}{1-k_5}
\end{align*}
\end{proof}

\subsection{Proof of Hypothesis on Proposition \ref{fourier}}
In this subsection we prove that there exist $0<k_0<1$, $\Delta_0>0$ such that $\|T_{\xi}\|\leq k_0$ for all $|\xi|\in [1,\Delta_0\rho^{-1}]$, and these constants does not depend on $\rho$. Remember that the operator $T_{\xi}:\C^{\Ld}\to \C^{\Ld}$ is given by $T_{\xi}((z_{\ld})_{\ld \in\Ld})=(w_{\ld})_{\ld \in\Ld}$ where
\[w_{\ld}=\sum_{\ld'\in\Ld} p_{\ld}^{\ld'}\xi(a_{\ld}^{\ld'})z_{\ld'}.\]
Notice that $\C^{\Ld}$ can be decomposed in two ways
\[\C^{\Ld}=\bigoplus_{i\in A} \C^{\Ld_i},\,\,\,\C^{\Ld}=\bigoplus_{j\in A} \C^{\Ld^j},\]
and the operator $T_{\xi}$ sends $\C^{\Ld_i}$ into $\C^{\Ld^i}$. Let $\|\cdot\|_i$, $\|\cdot\|^{j}$ be the restriction to $\C^{\Ld_i}$, $\C^{\Ld^j}$, respectively, of the norm $\|\cdot\|$ on $\Ld$. Note that
\[\|z\|^2=\sum_{i\in A}\|\pi_i(z)\|_i^2=\sum_{j\in A}(\|\pi^j(z)\|^j)^2, \forall z\in \C^{\Ld},\]
where $\pi_i:\C^{\Ld}\to \C^{\Ld_i}$, $\pi^j:\C^{\Ld}\to \C^{\Ld^j}$ are the projections given by the decompositions. This implies that $\|T_{\xi}\|\leq k_0$ if and  only if $\|T_{\xi}|_{\C^{\Ld_i}}\|\leq k_0$, $\forall i\in A$, where $T_{\xi}|_{\C^{\Ld_i}}$ is the restriction
\[T_{\xi}|_{\C^{\Ld_i}}:(\C^{\Ld_i},\|\cdot\|_i)\to (\C^{\Ld^i},\|\cdot\|^i).\]
We start by supposing that there exist $\rho>0$, $|\xi|\in [1,\Delta_0\rho^{-1}]$, $i\in A$ such that 
\[\|T_{\xi}|_{\C^{\Ld_i}}\|\geq (1-\eta_0)^{1/2}.\]
From this we will derive a series of inequalities depending on parameters $\eta_0$, $\eta_1$, $\eta_2$,... each new parameter $\eta_{j+1}$ will depend on $\eta_{j}$, not in $\rho$, and $\lim_{\eta_j\to 0}\eta_{j+1}=0$. Finally, we will see that with the appropriate value of $\Delta_0$ the last $\eta_j$ will be bounded away from zero and then also $\eta_0$, this will complete the proof.

By our assumption there is $z=(z_{\ld})\in\C^{\Ld_i}$ such that $\sum_{\ld\in \Ld_i}p^{\ld}|z_\ld|^2=1$ and for $w=T_{\xi}(z)$
\[\|w\|^2=\sum_{\ld\in\Ld^i}p^{\ld}|w_{\ld}|^2\geq 1-\eta_0.\]
Note that
\[|w_{\ld}|^2=\left| \sum_{\ld'\in \Ld}p_{\ld}^{\ld'}\xi(a_{\ld}^{\ld'})z_{\ld'}\right|^2\leq \left( \frac{1}{\#\Ld_i}\sum_{\ld'\in \Ld_i}|z_{\ld'}|\right)^2\leq \frac{1}{\#\Ld_i}\sum_{\ld'\in \Ld_i}|z_{\ld'}|^2.\]
Consider the set
\[\tilde{\Ld}=\{\ld \in \Ld^i: |w_{\ld}|^2 \geq (1-\eta_1)\frac{1}{\#\Ld_i}\sum_{\ld'\in \Ld_i}|z_{\ld'}|^2\},\]
where $\eta_1=\eta_0^{1/2}$. Hence
\begin{align*}
1-\eta_0&\leq \sum_{\ld\in \Ld^i}p^{\ld}|w_{\ld}|^2= \sum_{\ld\in \tilde{\Ld}}p^{\ld}|w_{\ld}|^2+ \sum_{\ld\in \Ld^i\setminus \tilde{\Ld}}p^{\ld}|w_{\ld}|^2\\
&<\sum_{\ld\in \tilde{\Ld}}p^{\ld}\left(\sum_{\ld'\in \Ld}p_{\ld}^{\ld'}|z_{\ld'}|^2\right)+ \sum_{\ld\in \Ld^i\setminus \tilde{\Ld}}p^{\ld}\left((1-\eta_1)\sum_{\ld'\in \Ld}p_{\ld}^{\ld'}|z_{\ld'}|^2\right)\\
&=\sum_{\ld,\ld'\in \Ld} p^{\ld}p_{\ld}^{\ld'}|z_{\ld'}|^2-\eta_1\sum_{\ld\in \Ld^i\setminus \tilde{\Ld}}p^{\ld}\left(\sum_{\ld'\in \Ld}p_{\ld}^{\ld'}|z_{\ld'}|^2\right).
\end{align*}
Since $\sum_{\ld,\ld'\in \Ld} p^{\ld}p_{\ld}^{\ld'}|z_{\ld'}|^2=\sum_{\ld'\in\Ld}p^{\ld'}|z_{\ld'}|^2=1$ and $\eta_1^2=\eta_0$ we get
\begin{align*}
\eta_1&>\sum_{\ld\in \Ld^i\setminus \tilde{\Ld}}p^{\ld}\left(\sum_{\ld'\in \Ld}p_{\ld}^{\ld'}|z_{\ld'}|^2\right)\\
&= \sum_{\ld\in \Ld^i\setminus \tilde{\Ld}}\frac{p^{\ld}}{p^i}\sum_{\ld'\in \Ld}p^{\ld'}|z_{\ld'}|^2\\
&=\sum_{\ld\in \Ld^i\setminus \tilde{\Ld}}\frac{p^{\ld}}{p^i}\\
&\geq C_2\rho^{d_1+...+d_n} \#(\Ld^i\setminus \tilde{\Ld}),
\end{align*}
Putting $\eta_2=\eta_1/C_2$ we obtain
\[\#(\Ld^i\setminus \tilde{\Ld})\leq \eta_2 \rho^{-(d_1+...+d_n)}.\]
Proceeding as in \cite{MY} define $Z_{\ld}^{\ld'}=\xi(a_{\ld}^{\ld'})z_{\ld'}$, then
\[\frac{1}{2}\sum_{\ld'_0\in \Ld_i}\sum_{\ld'_1\in \Ld_i}p_{\ld}^{\ld'_0}p_{\ld}^{\ld'_1}|Z_{\ld}^{\ld'_0}-Z_{\ld}^{\ld'_1}|^2=\sum_{\ld'\in \Ld_i}p_{\ld}^{\ld'}|z_{\ld'}|^2-|w_{\ld}|^2.\]
If $\ld\in \tilde{\Ld}$
\[\frac{1}{2}\sum_{\ld'_0\in \Ld_i}\sum_{\ld'_1\in \Ld_i}p_{\ld}^{\ld'_0}p_{\ld}^{\ld'_1}|Z_{\ld}^{\ld'_0}-Z_{\ld}^{\ld'_1}|^2\leq \frac{\eta_1}{\#\Ld_i}\sum_{\ld'\in \Ld_i}|z_{\ld'}|^2=\frac{\eta_1}{p^i},\]
hence
\[\sum_{\ld'_0\in \Ld_i}\sum_{\ld'_1\in \Ld_i}|Z_{\ld}^{\ld'_0}-Z_{\ld}^{\ld'_1}|^2\leq \frac{2(\#\Ld_i)^2}{p^i}\eta_1.\]
Now set $\tilde{Z}_{\ld}^{\ld'}=z_{\ld'}-\xi(-a_{\ld}^{\ld'})w_{\ld}$, then
\begin{align*}
|\tilde{Z}_{\ld}^{\ld'}|^2&=|\xi(a_{\ld}^{\ld'})z_{\ld'}-w_{\ld}|^2=\left|\xi(a_{\ld}^{\ld'})z_{\ld'}-\frac{1}{\#\Ld_i}\sum_{\ld'_0\in \Ld_i}Z_{\ld}^{\ld'_0}\right|^2\\
&=\left|\frac{1}{\#\Ld_i}\sum_{\ld'_0\in \Ld_i}(Z_{\ld}^{\ld'}-Z_{\ld}^{\ld'_0})\right|^2\leq \frac{1}{\#\Ld_i}\sum_{\ld'_0\in \Ld_i}|Z_{\ld}^{\ld'}-Z_{\ld}^{\ld'_0}|^2.
\end{align*}
Summing over $\ld'$ gives
\[\sum_{\ld'\in \Ld_i}|\tilde{Z}_{\ld}^{\ld'}|^2\leq \frac{1}{\#\Ld_i}\sum_{\ld'\in \Ld_i}\sum_{\ld'_0\in \Ld_i}|Z_{\ld}^{\ld'}-Z_{\ld}^{\ld'_0}|^2\leq \frac{2\#\Ld_i}{p^i}\eta_1\leq \eta_3 \rho^{-(d_1+...+d_n)},\]
for $\eta_3$ a constant multiple of $\eta_1$. Pick $\ld_0,\ld_1\in \tilde{\Ld}$, then
\[z_{\ld'}=\xi(-a_{\ld_0}^{\ld'})w_{\ld_0}+\tilde{Z}_{\ld_0}^{\ld'}=\xi(-a_{\ld_1}^{\ld'})w_{\ld_1}+\tilde{Z}_{\ld_1}^{\ld'},\]
and from this\footnote{Redefine $\eta_3$ as $4\eta_3$.}
\[\sum_{\ld'\in \Ld_i}|\xi(-a_{\ld_0}^{\ld'})w_{\ld_0}-\xi(-a_{\ld_1}^{\ld'})w_{\ld_1}|^2=\sum_{\ld'\in \Ld_i}|\tilde{Z}_{\ld_1}^{\ld'}-\tilde{Z}_{\ld_0}^{\ld'}|^2\leq \eta_3 \rho^{-(d_1+...+d_n)}.\]
Observe that
\begin{align*}
|\xi(-a_{\ld_0}^{\ld'})w_{\ld_0}-\xi(-a_{\ld_1}^{\ld'})w_{\ld_1}| &= |\xi(a_{\ld_1}^{\ld'}-a_{\ld_0}^{\ld'})w_{\ld_0}-w_{\ld_1}|\\
&\geq min\{|w_{\ld_0}|,|w_{\ld_1}|\}\cdot \, 2 \sin \left(\frac{\left\langle \xi,a_{\ld_1}^{\ld'}-a_{\ld_0}^{\ld'}\right\rangle + \phi}{2} \right),
\end{align*}
where $\langle \xi, (t,v)\rangle=\sum_{j=1}^{n-1}\mu_j t_j+\sum_{j=1}^{n}m_j v_j\in \T$, for $\xi=(\mu,m)\in \R^{n-1}\times \Z^n$, and $\phi$ is the argument of the complex number $w_{\ld_0}/w_{\ld_1}$. Using this inequality together with\footnote{Here we assume $\eta_1<3/4$, which can be assumed without loss of generality.}
\[ |w_{\ld}|^2 \geq (1-\eta_1)\frac{1}{\#\Ld_i}\sum_{\ld'\in \Ld_i}|z_{\ld'}|^2 = \frac{1-\eta_1}{p^i} \geq \frac{1}{4},\,\,\,\forall \ld\in \tilde{\Ld},\]
we see that
\[\sum_{\ld'\in \Ld_i}\sin^2 \left(\frac{\left\langle \xi,a_{\ld_1}^{\ld'}-a_{\ld_0}^{\ld'}\right\rangle + \phi}{2} \right)\leq \eta_3 \rho^{-(d_1+...+d_n)}.\]
Let $\eta_4=\eta_3^{1/3}$, the previous inequality implies that
\[\sin \left(\frac{\left\langle \xi,a_{\ld_1}^{\ld'}-a_{\ld_0}^{\ld'}\right\rangle + \phi}{2} \right)\leq \eta_4,\]
for all $\ld'\in \Ld_i$ but $\eta_4\rho^{-(d_1+...+d_n)}$ $\ld'$'s. From this we get
\[\|\langle \xi,a_{\ld_1}^{\ld'}-a_{\ld_0}^{\ld'}\rangle+\phi \|\leq \eta_5\]
for all $\ld'\in \Ld_i$ but $\eta_5\rho^{-(d_1+...+d_n)}$ $\ld'$'s, where $\eta_5$ is a constant multiple of $\eta_4$.

Let $j_0$ such that $|\xi|=|\mu_{j_0}|$ or $|\xi|=|m_{j_0}|$. We will fix some specific 
$\ld_0, \ld_1 \in \tilde{\Ld}$ of the form
\begin{align*}
    \ld_0&=(\ul{d}^0,....,\ul{d}^{j_0-1},\ul{a}^0,\ul{d}^{j_0+1},...,\ul{d}^n),\\
    \ld_1&=(\ul{d}^0,....,\ul{d}^{j_0-1},\ul{a}^1,\ul{d}^{j_0+1},...,\ul{d}^n).
\end{align*}
Notice that $\ld_0, \ld_1$ only differ on the $j_0$ coordinate. Moreover, if $j_0\neq n$ we have
\begin{align*}
a_{\ld_1}^{\ld'}-a_{\ld_0}^{\ld'}&= (0,...,0,\log \frac{r_{\ul{b}^{j_0}}^{\tb^1}}{r_{\ul{b}^{j_0}}^{\tb^0}},0,...,0,v_{\ul{b}^{j_0}}^{\tb^1}-v_{\ul{b}^{j_0}}^{\tb^0},0,...,0)
\end{align*}
where $\ld'=(\ul{b}^1,...,\ul{b}^n)$, and $\tb^0,\, \tb^1 \in \Sigma_{j_0}^-$ end with $\ul{a}^0$, $\ul{a}^1$, respectively. If $j_0=n$, then
\begin{align*}
a_{\ld_1}^{\ld'}-a_{\ld_0}^{\ld'}&= (\log \frac{r^{\tb^0}_{\ul{b}^n}}{r^{\tb^1}_{\ul{b}^n}},...,\log \frac{r^{\tb^0}_{\ul{b}^n}}{r^{\tb^1}_{\ul{b}^n}},0,...,0,v_{\ul{b}^{n}}^{\tb^1}-v_{\ul{b}^{n}}^{\tb^0}).
\end{align*}
We remark that $a_{\ld_1}^{\ld'}-a_{\ld_0}^{\ld'}$ only depends on the $j_0$ Cantor set $K_{j_0}$.

Given that $K_{j_0}$ is not essentially affine there is $\tilde{\tb}^0, \tilde{\tb}^1 \in \Sigma_{j_0}^-$ and $x_0\in K_{j_0}^{\tilde{\tb}^0}$ such that%\footnote{We assume without loss of generality that $\tilde{\tb}^0, \tilde{\tb}^1$ ends with the $j_0$ element of $i$}
\[D^2[k^{\tilde{\tb}^1}\circ (k^{\tilde{\tb}^0})^{-1}](x_0)\neq 0.\]
For any $\tb^0, \tb^1 \in \Sigma_{j_0}^-$ we define $F_{\tb^0,\tb^1}:=k^{\tb^1}\circ (k^{\tb^0})^{-1}$. Since $x_0\in K_{j_0}^{\tilde{\tb}^0}$ then $DF_{\tilde{\tb}^0,\tilde{\tb}^1}(x_0)$ is a conformal matrix. Denote by $C\subset GL(2,\R)$ the set of $2\times 2$ conformal matrices. Let $P:U\to C$ be a smooth function from a neighborhood $U\subset GL(2,\R)$ of $DF_{\tilde{\tb}^0,\tilde{\tb}^1}(x_0)$ into $C$, such that $P(A)=A$ for all $A\in C\cap U$. We will use the notation $\mathbb{D}F_{\tb^0,\tb^1}(x)= P(DF_{\tb^0,\tb^1}(x))$. The properties of $P$ and the fact that $K_{j_0}$ is not essentially real allow us to conclude that $D\mathbb{D}F_{\tilde{\tb}^0,\tilde{\tb}^1}(x_0)= D^2 F_{\tilde{\tb}^0,\tilde{\tb}^1}(x_0)$. Now notice that $C$ can be naturally identified with $\C^*$ and in this sense we can chose a branch of logarithm $\log$ defined in $P(U)$ (for $U$ small). Then lemma \ref{lem:HigherDerivative} will imply that 
\[\beta := D \log \mathbb{D}F_{\tilde{\tb}^0,\tilde{\tb}^1}(x_0)\neq 0\]
is a conformal matrix. In the rest of the proof we will make an abuse of notation, $v_{\ul{b}^{j_0}}^{\tb^1}-v_{\ul{b}^{j_0}}^{\tb^0}$ will not represent an element of $\T$ but the imaginary part of
\[\log e^{(v_{\ul{b}^{j_0}}^{\tb^1}-v_{\ul{b}^{j_0}}^{\tb^0})i},\]
in this way we have chosen a representative in the class defined by $v_{\ul{b}^{j_0}}^{\tb^1}-v_{\ul{b}^{j_0}}^{\tb^0}$. Define the following vectors in $\R^2$
\begin{align*}
    d^{\ld'}_{\ld_1,\ld_0}=(\log \frac{r_{\ul{b}^{j_0}}^{\tb^1}}{r_{\ul{b}^{j_0}}^{\tb^0}},v_{\ul{b}^{j_0}}^{\tb^1}-v_{\ul{b}^{j_0}}^{\tb^0}),
\end{align*}
and
\[ \tilde{\xi}=
	\begin{cases}
		(\mu_{j_0},m_{j_0})  & \mbox{if } j_0\neq n, \\
		(-(\mu_1+...+\mu_{n-1}),m_n)  & \mbox{if } j_0=n.
	\end{cases}
\]
Notice that $1\leq |\tilde{\xi}|\leq n \Delta_0 \rho^{-1}$ and 
\[\langle \xi,a_{\ld_1}^{\ld'}-a_{\ld_0}^{\ld'}\rangle=\langle \tilde{\xi}, d_{\ld_1, \ld_0}^{\ld'}\rangle \mod 2\pi \Z,\]
where the $\langle \cdot, \cdot \rangle$ in the right hand side of the equation refers to the usual inner product on $\R^2$.

Since $k^{\tb}$ depends continuously on $\tb$ we get that
\begin{equation}\label{eq:beta}
|D \log \mathbb{D}F_{\tb^0,\tb^1}(x)-\beta|\leq \delta_1,
\end{equation}
for all $\tb^0$, $\tb^1$, $x$ close enough to $\tilde{\tb}^0$, $\tilde{\tb}^1$, $x_0$; the value of $\delta_1$ will be fixed later. We assume that $\eta_2$ is small such that the proportion of $\tilde{\Ld}$ inside $\Ld^i$ is big enough to exist $\ld_0, \ld_1\in \tilde{\Ld}$, with the form specified before, verifying that $\tb^0$, $\tb^1$ are close to $\tilde{\tb^0}$, $\tilde{\tb^1}$ so that Eq. (\ref{eq:beta}) holds.\footnote{Here we also need to suppose that $\rho_0$ is small enough.} From now on, $\ld_0$ and $\ld_1$ are fixed as these values.

Now fix $\ul{c}^0\in \Sigma_{j_0}^{fin}$ such that any $x$ in the convex hull of $G^{\tb^0}(\ul{c}^0)$ is close enough to $x_0$ in order to have Eq. (\ref{eq:beta}). Denote by $\Sigma_{j_0}(\tilde{c}_0,\rho,\ul{c}^0)$ all the elements of $\Sigma_{j_0}(\tilde{c}_0,\rho)$ starting with $\ul{c}^0$, this is a positive proportion of $\Sigma_{j_0}(\tilde{c}_0,\rho)$ (independent of $\rho$). Then, if we assume $\eta_5$ small enough we can guarantee that for a proportion of $\ul{b}\in \Sigma_{j_0}(\tilde{c}_0,\rho,\ul{c}^0)$, as big as we want, there exist $\ul{b}^j\in \Sigma_j(\tilde{c}_0,\rho)$, $j\neq j_0$, such that $\ld'=(\ul{b}^1,...,\ul{b}^{j_0-1},\ul{b},\ul{b}^{j_0+1},...,\ul{b}^n)$ verifies
\begin{equation}\label{eq:discreto}
|\langle \tilde{\xi}, d_{\ld_1, \ld_0}^{\ld'}\rangle+\phi-2m(\ul{b})\pi |\leq \eta_5,
\end{equation}
where $m(\ul{b})$ is an integer depending on $\ul{b}$. Denote the set of such $\ul{b}$ by $\tilde{\Sigma}_{j_0}(\tilde{c}_0,\rho,\ul{c}^0)$.

For simplicity write $F=k^{\tb^1}\circ (k^{\tb^0})^{-1}$, instead of $F_{\tb^0,\tb^1}$, then we have
\begin{equation}\label{eq:dfv}
F(c_{\ul{b}}^{\tb^0})=c_{\ul{b}}^{\tb^1},\,\,\,DF(c_{\ul{b}}^{\tb^0})=\|DF(c_{\ul{b}}^{\tb^0})\|\cdot R_{v_{\ul{b}}^{\tb^1}-v_{\ul{b}}^{\tb^0}}.
\end{equation}

We will show that the distance between $\log \mathbb{D}F(c_{\ul{b}}^{\tb^0})$ and $d_{\ld_1,\ld_0}^{\ld'}$ is of order $\rho$, for any $\ld'\in \Ld$. Let $z_0, z_1 \in G^{\tb^0}(\ul{b})$ such that $r^{\tb^0}_{\ul{b}}=|z_0-z_1|$, using Taylor expansion at $c^{\tb^0}_{\ul{b}}$ we get
\[F(z_0)-F(z_1)=DF(c^{\tb^0}_{\ul{b}})(z_0-z_1)+O(|z_0-z_1|^2),\]
where the constant in the $O$ notation does not depend on $\rho$, $\ul{b}$, $\tb^1$ or $\tb^0$. Hence
\begin{align*}
r_{\ul{b}}^{\tb^0}\|DF(c_{\ul{b}}^{\tb^0})\|&=|z_0-z_1|\|DF(c_{\ul{b}}^{\tb^0})\|\\
&\leq  |F(z_0)-F(z_1)|+O(|z_0-z_1|^2)\\
&\leq r_{\ul{b}}^{\tb^1}+O((r_{\ul{b}}^{\tb^0})^2).
\end{align*}
Dividing by $r_{\ul{b}}^{\tb^0}$ and using the fact that $r_{\ul{b}}^{\tb^0}$ is of order $\rho$ we get
\[ \|DF(c_{\ul{b}}^{\tb^0})\|-\frac{r_{\ul{b}}^{\tb^1}}{r_{\ul{b}}^{\tb^0}} \leq O(\rho).\]
A simililar argument gives
\[\frac{r_{\ul{b}}^{\tb^1}}{r_{\ul{b}}^{\tb^0}}-\|DF(c_{\ul{b}}^{\tb^0})\| \leq O(\rho).\]
Therefore
\[ \left| \|DF(c_{\ul{b}}^{\tb^0})\|-\frac{r_{\ul{b}}^{\tb^1}}{r_{\ul{b}}^{\tb^0}} \right| \leq O(\rho).\]
Now, given the fact that $\|DF(c_{\ul{b}}^{\tb^0})\|$ and $r_{\ul{b}}^{\tb^1}/r_{\ul{b}}^{\tb^0}$ are uniformly bounded away from zero, we obtain that there is a constant $C_3>0$, independent of $\rho$, such that
\begin{equation}\label{eq:dfr}
\left| \log \|DF(c_{\ul{b}}^{\tb^0})\|- (\log r_{\ul{b}}^{\tb^1}-\log r_{\ul{b}}^{\tb^0}) \right| \leq C_3\rho.
\end{equation}
Given $\ul{b}^1,\ul{b}^2\in \Sigma_{j_0}(\tilde{c}_0,\rho,\ul{c}^0)$, by the choice of $\ul{c}^0$, using Taylor aproximation and Eq. (\ref{eq:beta}) we will have that
\begin{equation}\label{eq:btilde}
\log \mathbb{D}F(c^{\tb^0}_{\ul{b}^1})-\log \mathbb{D}F(c^{\tb^0}_{\ul{b}^2})=\beta_1(c^{\tb^0}_{\ul{b}^1}-c^{\tb^0}_{\ul{b}^2}),
\end{equation}
for some $\beta_1$ such that $\|\beta_1-\beta\|\leq \delta_1$.

The idea to finish the proof is the following: we use Eq. (\ref{eq:discreto}) to see that the set of $d_{\ld_1, \ld_0}^{\ld'}$ projected to the line generated by $\tilde{\xi}$ is close to an arithmetic progression, then two points will be either very close or very far from each other. Equations (\ref{eq:dfv}), (\ref{eq:dfr}), (\ref{eq:btilde}) allow to translate this fact about $d_{\ld_1,\ld_0}^{\ld'}$ to the analogous one about the set of $c^{\tb^0}_{\ul{b}}$. Finally, we will use the fact that $K_{j_0}$ is not essentially real to estimate $|c^{\tb^0}_{\ul{b}^1}-c^{\tb^0}_{\ul{b}^2}|$ from $\langle \tilde{\xi}, \beta_1 (c^{\tb^0}_{\ul{b}^1}-c^{\tb^0}_{\ul{b}^2})\rangle$, thus it will happen that $|c^{\tb^0}_{\ul{b}^1}-c^{\tb^0}_{\ul{b}^2}|$ is either too big or too small which will bring us into a contradiction with the boundeness of the geometry of the Cantor set.

Any pair $\ul{b}_1, \ul{b}_2 \in \tilde{\Sigma}_{j_0}(\tilde{c}_0,\rho,\ul{c}^0)$ should verify one of two options:
\begin{itemize}
\item[(i)] If $m(\ul{b}_1)=m(\ul{b}_2)$, using Eq. (\ref{eq:discreto}) for $\ld'_1$, $\ld'_2$ assosiated to $\ul{b}_1$, $\ul{b}_2$, respectively, we get
\[ |\langle \tilde{\xi},d_{\ld_1,\ld_0}^{\ld'_1}\rangle-\langle \tilde{\xi},d_{\ld_1,\ld_0}^{\ld'_2}\rangle |\leq 2\eta_5.\]
This together with Eq. (\ref{eq:dfv}), (\ref{eq:dfr}) gives
\[| \langle \tilde{\xi}, \log \mathbb{D}F(c_{\ul{b}_1}^{\tb^0})-\log \mathbb{D}F(c_{\ul{b}_2}^{\tb^0})\rangle| \leq 2\eta_5+2C_3 |\tilde{\xi}|\rho. \]
Considering Eq. (\ref{eq:btilde}) leads to
\[ |\langle \beta_1^T \tilde{\xi},c_{\ul{b}_1}^{\tb^0}-c_{\ul{b}_2}^{\tb^0}\rangle|=|\langle \tilde{\xi},\beta_1(c_{\ul{b}_1}^{\tb^0}-c_{\ul{b}_2}^{\tb^0})\rangle | \leq 2\eta_5+2C_3 |\tilde{\xi}|\rho,\]
where $\beta_1^T$ is the transpose of $\beta_1$.
\item[(ii)] If $m(\ul{b}_1)\neq m(\ul{b}_2)$ a similar process arrives to
\[ |\langle \beta_1^T \tilde{\xi},c_{\ul{b}_1}^{\tb^0}-c_{\ul{b}_2}^{\tb^0}\rangle| \geq \pi-2C_3 |\tilde{\xi}|\rho.\]
\end{itemize}
Now we use the hypothesis that $K_{j_0}$ is not essentially real. First, we choose a constant $C_5>0$, depending only in $\tilde{c}_0$ and the Cantor set $K_{j_0}$, such that for any $\ul{a}\in \Sigma_{j_0}^{fin}$ one has
\[\{f_{\ul{a}}^{-1}(G(\ul{b})):\ul{b}\in\Sigma_{j_0}(\tilde{c}_0,\rho,\ul{a})\}\subset  \{G(\ul{b}):\ul{b}\in\Sigma_{j_0}(C_5,\tilde{\rho})\},\]
for some $\tilde{\rho}>0$, which depends on $\rho$ and $\ul{a}$. %Indeed, notice that $f_{\ul{a}}^{-1}=(k^{\tb\ul{a}})^{-1}\circ (F^{\tb}_{\ul{a}})^{-1}\circ k^{\tb}$ for any limit geometry $\tb$, then the diameter of the elements $f_{\ul{a}}^{-1}(G(\ul{b}))$ will be of order $\rho/r_{\ul{a}}$.
Lemma \ref{lem:cono} proves that there is an angle $\alpha \in (0,\pi/2)$ and numbers $\rho_2>0$, $a\in (0,1)$ such that for any limit geometry $k^{\tb}$, $x\in G^{\tb}(\theta_0)$, line $L$, $s\in \mathbb{A}_{j_0}$, $D$ discretization of $K_{j_0}(\theta_0, s)$ of order less than $\rho_2$
\[\#\{\ul{a}\in D:\,\, G^{\tb}(\ul{a})\cap Cone(x,L,\alpha)\neq \emptyset \}\leq a\cdot \#D.\]
Remember that a discritization $D$ of $K_{j_0}(\theta_0, s)$ of order $\rho$ is a subset of $\Sigma_{j_0} (C_5,\rho)$ such that
\[\bigcup_{\ul{a}\in D}K_{j_0}(\ul{a})=K_{j_0}(\theta_0, s),\]
for some pre-fixed constant $C_5$.

Fix $\delta_1$ by requiring that $\|\beta_1-\beta\|<\delta_1$ implies that
\[m(\beta)/2\leq m(\beta_1^T) \leq \|\beta_1^T\| \leq 2\|\beta\|\]
and the angle between $\beta^T w$ and $\beta_1^T w$ is less than $\alpha/2$, for any $w\in \R \setminus \{0\}$. Remember that $m(A)=\inf_{w\neq 0}\frac{|A w|}{|w|}$ and that $\beta$ is conformal, hence $m(\beta^T)=m(\beta)=\|\beta\|=\|\beta^T\|$. Fix $\tilde{a}\in (a,1)$, assuming $\eta_5$ small enough we can guarantee that 
\[\#\tilde{\Sigma}_{j_0}(\tilde{c}_0,\rho, \ul{c}^0)> \tilde{a} \cdot \#\Sigma_{j_0}(\tilde{c}_0,\rho, \ul{c}^0).\]
This allows us to find a finite sequence $\ul{c}^0$,...,$\ul{c}^m$ of elements of $\Sigma^{fin}$ such that:
\begin{itemize}
\item $\ul{c}^{j+1}$ starts with $\ul{c}^j$ and has one more letter.
\item $\#\left(\Sigma_{j_0}(\tilde{c}_0,\rho, \ul{c}^j)\cap \tilde{\Sigma}_{j_0}(\tilde{c}_0,\rho, \ul{c}^0)\right)> \tilde{a}\cdot \#\Sigma_{j_0}(\tilde{c}_0,\rho, \ul{c}^j)$.
\item $\Sigma_{j_0}(\tilde{c}_0,\rho, \ul{c}^j)\cap \tilde{\Sigma}_{j_0}(\tilde{c}_0,\rho, \ul{c}^0) \not\subset \Sigma_{j_0}(\tilde{c}_0,\rho, \ul{c}^{j+1})$.\footnote{Actually, for this property to be true we take $\tilde{a}$ big such that $\#\Sigma_{\tilde{c}_0}(\rho, \ul{c}^{j+1})<\tilde{a}\cdot \Sigma_{\tilde{c}_0}(\rho, \ul{c}^j)$.}
\item $\ul{c}^j \in \Sigma_{j_0}(\tilde{c}_0,\rho)$ only for $j=m$.
\end{itemize}
Fix an integer $m_0<m$ such that for any $\ul{b}\in \Sigma_{j_0}(\tilde{c}_0,\rho,\ul{c}^j)$ and $j<m_0$ we have $f_{\ul{c}^j}^{-1}(G(\ul{b}))=G(\ul{b}')$ for $\ul{b}'\in \Sigma_{j_0}(C_5,\tilde{\rho})$, for $\tilde{\rho}<\rho_2$ (this only requires that $m-m_0$ is big enough). For each $\ul{c}^j$, $j<m$, we will choose two elements $\ul{a}^{1,j}, \ul{a}^{2,j}\in \Sigma_{j_0}(\tilde{c}_0,\rho,\ul{c}^j)\cap \tilde{\Sigma}_{j_0}(\tilde{c}_0,\rho,\ul{c}^0)$ in the following way:
\begin{itemize}
\item First, we choose any $\ul{a}^{1,j} \in \Sigma_{j_0}(\tilde{c}_0,\rho,\ul{c}^j s')\cap \tilde{\Sigma}_{j_0}(\tilde{c}_0,\rho,\ul{c}^0)$, where $s'$ is a letter in $\mathbb{A}_{j_0}$ such that $\ul{c}^{j+1}\neq \ul{c}^j s'$.
\item If $j\geq m_0$ then we choose any $\ul{a}^{2,j}\in \Sigma_{j_0}(\tilde{c}_0,\rho,\ul{c}^{j+1})\cap \tilde{\Sigma}_{j_0}(\tilde{c}_0,\rho,\ul{c}^0)$.
\item Suppose $j<m_0$. Given $\ul{b}\in \Sigma_{j_0}(\tilde{c}_0,\rho,\ul{c}^{j+1})$, it can be written as \[\ul{b}=(c^j_{-k},..,c^j_{-1},c^j_0,c^{j+1}_0,b_1,...,b_p),\]
where $c^j_0$, $c^{j+1}_0$ are the last letters of $\ul{c}^j$, $\ul{c}^{j+1}$, respectively. Using this notation we can define the set
\[D=\{(c^j_0,c^{j+1}_0,b_1,..,b_p)\in \Sigma^{fin}: (c^j_{-k},..,c^j_{-1},c^j_0,c^{j+1}_0,b_1,..,b_p)\in \Sigma_{j_0}(\tilde{c}_0,\rho,\ul{c}^{j+1})\}.\]
This set is a discretization of $K_{j_0}(c^j_0,c^{j+1}_0)$ and by our choice of $m_0$ it has order less than $\rho_2$. Now use lemma \ref{lem:cono} for the limit geometry $K_{j_0}^{\tb^0 \ul{c}^j}$, point $x=(F^{\tb^0}_{\ul{c}^j})^{-1}(c^{\tb^0}_{\ul{a}^{1,j}})$ and line $L$ such that $F^{\tb^0}_{\ul{c}^j}(L)$ is orthogonal to the line generetad by $\beta^T \tilde{\xi}$. Hence
\[\#\{\ul{a}\in D:\,\, G^{\tb^0 \ul{c}^j}(\ul{a})\cap Cone(x,L,\alpha)\neq \emptyset \}\leq a\cdot \#D,\]
and then
\[\#\{\ul{b}\in \Sigma_{j_0}(\tilde{c}_0,\rho,\ul{c}^{j+1}):\,\, G^{\tb^0}(\ul{b})\cap Cone(c^{\tb^0}_{\ul{a}^{1,j}},F^{\tb^0}_{\ul{c}^j}(L),\alpha)\neq \emptyset \}\leq a\cdot \# \Sigma_{j_0}(\tilde{c}_0,\rho,\ul{c}^{j+1}).\]
Since $\tilde{a}>a$ then there are elements $\ul{a} \in \Sigma_{j_0}(\tilde{c}_0,\rho,\ul{c}^{j+1})\cap \tilde{\Sigma}_{j_0}(\tilde{c}_0,\rho,\ul{c}^0)$ such that 
\[G^{\tb^0}(\ul{a})\cap Cone(c^{\tb^0}_{\ul{a}^{1,j}},F^{\tb^0}_{\ul{c}^j}(L),\alpha)= \emptyset.\]
We choose $\ul{a}^{2,j}$ as any such element. It easily follows from the choice of $\ul{a}^{2,j}$ that
\[\measuredangle (c^{\tb^0}_{\ul{a}^{1,j}}-c^{\tb^0}_{\ul{a}^{2,j}}, F^{\tb^0}_{\ul{c}^j}(L))>\alpha. \]
\end{itemize}
Let $\tilde{L}$ be the line orthogonal to the vector $\beta_1^T \tilde{\xi}$. The previous inequality and the choice of $\delta_1$ implies that
\begin{equation}\label{eq:angestimate}
\measuredangle (c^{\tb^0}_{\ul{a}^{1,j}}-c^{\tb^0}_{\ul{a}^{2,j}}, \tilde{L})>\alpha/2.
\end{equation}
Now we have all the ingredients to finish the proof. For any $j$ the pair $\ul{a}^{1,j}, \ul{a}^{2,j}$ verifies either option (i) or (ii), note that (ii) implies
\begin{align*}
|c^{\tb^0}_{\ul{a}^{1,j}}-c^{\tb^0}_{\ul{a}^{2,j}}| \geq \frac{|\langle \beta_1^T \tilde{\xi}, c^{\tb^0}_{\ul{a}^{1,j}}-c^{\tb^0}_{\ul{a}^{2,j}}\rangle|}{ \|\beta_1^T\|\cdot |\tilde{\xi}| } &\geq \pi \|\beta_1\|^{-1}|\tilde{\xi}|^{-1}-2C_3 \|\beta_1\|^{-1}\rho\\
&\geq \frac{\pi \|\beta\|^{-1}}{2}|\tilde{\xi}|^{-1}-4C_3 \|\beta\|^{-1}\rho\\
&\geq \left(\frac{\pi \|\beta\|^{-1}}{2n}\Delta_0^{-1} -4C_3 \|\beta\|^{-1}\right)\rho.
\end{align*}
Hence, choosing $\Delta_0$ small enough, we can guarantee that option (ii) is not verified for $j=n-1$. On the other hand, if $\ul{a}^{1,j}, \ul{a}^{2,j}$ verifies option (i) then using (\ref{eq:angestimate}) we get
\begin{align*}
|c^{\tb^0}_{\ul{a}^{1,j}}-c^{\tb^0}_{\ul{a}^{2,j}}| \cdot \sin (\alpha/2) &\leq |c^{\tb^0}_{\ul{a}^{1,j}}-c^{\tb^0}_{\ul{a}^{2,j}}| \cdot \sin \measuredangle (c^{\tb^0}_{\ul{a}^{1,j}}-c^{\tb^0}_{\ul{a}^{2,j}}, \tilde{L})\\
&= |c^{\tb^0}_{\ul{a}^{1,j}}-c^{\tb^0}_{\ul{a}^{2,j}}| \cdot \cos \measuredangle (c^{\tb^0}_{\ul{a}^{1,j}}-c^{\tb^0}_{\ul{a}^{2,j}}, \R \beta_1^T \tilde{\xi})\\
&= \frac{|\langle \beta_1^T \tilde{\xi} , c^{\tb^0}_{\ul{a}^{1,j}}-c^{\tb^0}_{\ul{a}^{2,j}}\rangle|}{|\beta_1^T \tilde{\xi}|}\\
&\leq \frac{2\eta_5 + 2C_3 |\tilde{\xi}| \rho}{\frac{1}{2}\|\beta\| \cdot |\tilde{\xi}|}\\
&\leq 4\eta_5 |\tilde{\xi}|^{-1}\|\beta\|^{-1}+4C_3\rho \|\beta\|^{-1},
\end{align*}
we obtained
\[|c^{\tb^0}_{\ul{a}^{1,j}}-c^{\tb^0}_{\ul{a}^{2,j}}| \leq (\sin(\alpha/2))^{-1} (4\eta_5 |\tilde{\xi}|^{-1}\|\beta\|^{-1}+4C_3\rho \|\beta\|^{-1}).\]
Hence, assuming $\eta_5$ and $\rho$ small enough, we can guarantee that option (i) is not verified for $j=0$. Therefore, there exists $j$ such that $\ul{a}^{1,j}, \ul{a}^{2,j}$ verifies option (ii) and $\ul{a}^{1,j+1}, \ul{a}^{2,j+1}$ verifies option (i), from the inequalities obtained we see that
\begin{align*}
\frac{|c^{\tb^0}_{\ul{a}^{1,j+1}}-c^{\tb^0}_{\ul{a}^{2,j+1}}|}{|c^{\tb^0}_{\ul{a}^{1,j}}-c^{\tb^0}_{\ul{a}^{2,j}}|}&\leq \left(\sin \frac{\alpha}{2}\right)^{-1} \frac{4\eta_5 |\tilde{\xi}|^{-1}\|\beta\|^{-1}+4C_3\rho \|\beta\|^{-1}}{\frac{\pi \|\beta\|^{-1}}{2}|\tilde{\xi}|^{-1}-4C_3\|\beta\|^{-1}\rho}\\
&= \left(\sin \frac{\alpha}{2}\right)^{-1} \frac{4\eta_5 +4C_3\rho |\tilde{\xi}|}{(\pi /2)-4C_3\rho|\tilde{\xi}|}\\
&\leq \left(\sin \frac{\alpha}{2}\right)^{-1} \frac{4\eta_5 +4nC_3\Delta_0}{(\pi /2)-4nC_3\Delta_0},
\end{align*}
here we used $|\tilde{\xi}|\in [1,n\Delta_0\rho^{-1}]$. We obtained
\[\frac{|c^{\tb^0}_{\ul{a}^{1,j+1}}-c^{\tb^0}_{\ul{a}^{2,j+1}}|}{|c^{\tb^0}_{\ul{a}^{1,j}}-c^{\tb^0}_{\ul{a}^{2,j}}|}\leq \left(\sin \frac{\alpha}{2}\right)^{-1} \frac{4\eta_5 +4nC_3\Delta_0}{(\pi /2)-4nC_3\Delta_0},\]
notice that the right hand side of the inequality goes to zero as $\Delta_0$ and $\eta_5$ go to zero, however the left hand side is bounded away from zero thanks to the bounded geometry of the Cantor set $K_{j_0}$. We conclude that for $\Delta_0$ small enough $\eta_5$ is bounded away from zero, as we wanted to prove.

\bibliographystyle{unsrt}
\bibliography{bibliography}

\iffalse

\fi

\end{document}